\documentclass[12pt,reqno]{amsart} 
\usepackage{amsthm}

\usepackage{amsmath,amssymb}
\usepackage{ascmac}
\usepackage{mathrsfs}

\usepackage[abbrev]{amsrefs}

\usepackage{graphicx,xcolor}

\usepackage{hyperref}  
\hypersetup{
    colorlinks=true,     % 枠ではなく文字自体をリンクにする
    linkcolor=red,      % セクション番号などのリンク
    citecolor=green,      % 文献番号のリンク
    urlcolor=blue,       % URLリンク
    bookmarks=true,
    bookmarksnumbered=true,
    bookmarksdepth=tocdepth
}

\allowdisplaybreaks[1]
\theoremstyle{definition}
\newtheorem{dfn}{Definition}
\newtheorem{thm}{Theorem}[section]

\newtheorem{prop}[thm]{Proposition}
\newtheorem{lem}[thm]{Lemma}
\newtheorem{rem}{Remark}[section]

\newtheorem*{nota}{Notation}

\newcommand{\eqsp}[1]{{\begin{equation}\begin{split}#1\end{split}\end{
equation}}}

\makeatletter
\newcommand{\subscripts}[3]{%
  \@mathmeasure\z@\displaystyle{#2}%
  \global\setbox\@ne\vbox to\ht\z@{}\dp\@ne\dp\z@
  \setbox\tw@\box\@ne
  \@mathmeasure4\displaystyle{\copy\tw@_{#1}}%
  \@mathmeasure6\displaystyle{{#2}_{#3}}%
  \dimen@-\wd6 \advance\dimen@\wd4 \advance\dimen@\wd\z@
  \hbox to\dimen@{}\mathop{\kern-\dimen@\box4\box6}%
}
\makeatother

\begin{document}

\begin{center}\LARGE \bf
  Uniqueness of the dissipative SQG without time-continuity assumption
\end{center}
   
\footnote[0]
{
{\it Mathematics Subject Classification}: 35Q35; 35Q86 

{\it 
Keywords}: surface quasi-geostrophic equation, 
fractional dissipation, 
uniqueness

* Taiki Okazaki -- E-mail: okazaki.taiki.r5@dc.tohoku.ac.jp

}
\vskip5mm

\begin{center}
  {\large Taiki Okazaki*} 
  \vskip2mm
  {\large Mathematical Institute, Tohoku University}\\
  {\large Sendai 980-8578 Japan}
 \end{center}

\vskip5mm

\begin{center}
\begin{minipage}{135mm}
\footnotesize
{{\sc Abstract.} }
We consider the uniqueness of the solution of the dissipative surface quasi-geostrophic equation, 
without assuming time-continuity and smallness of the solutions. 
We show that the uniqueness holds in the scale-critical Lebesgue spaces and non-homogeneous Besov spaces. 
The proof is based on the energy method, 
inspired by the approach introduced by Lions and Masmoudi~\cite{Li_Ma_2001} 
in the study of uniqueness for the Navier-Stokes equations. 
A key ingredient of the argument is the justification of the energy inequality 
via the smoothing effect of the fractional heat semigroup together with an iteration scheme based on the structure of the integral equation. 
\end{minipage}
\end{center}

\section{Introduction}
In this paper, we consider the dissipative surface quasi-geostrophic equation. 
\begin{equation}
  \label{SQG}
  \begin{cases}
    \partial_t \theta + \Lambda^\alpha \theta + ({\bf u} \cdot \nabla) \theta = 0,
          &\quad  \   t>0,x\in{\mathbb{R}}^2, \\
    {\bf u} = \nabla ^\perp \Lambda^{-1}\theta,
          &\quad  \ t>0,x\in{\mathbb{R}}^2,\\
    \theta(0,x) = \theta_0(x),
          &\quad  \ x\in{\mathbb{R}}^2,\\
  \end{cases}
\end{equation}
where $\Lambda^{\alpha} = (-\Delta)^{\frac{\alpha}{2}}$, $1\leq \alpha\leq 2$ and $\nabla^{\perp} = (-\partial_2, \partial_1)$. 
The fractional Laplacian is defined as follows using the Fourier transform: 
\begin{equation*}
  \Lambda^\alpha \theta (x) = \mathcal{F}^{-1}[|\xi|^\alpha \widehat{\theta}(\xi)](x). 
\end{equation*} 
%%物理的背景%% 
The real-valued unknown function $\theta(t,x)$ denotes the potential temperature 
and ${\bf u}(t,x)$ denotes the velocity of a fluid. 
The surface quasi-geostrophic equation is derived from the quasi-geostrophic approximation of the three-dimensional Navier-Stokes equations 
and the assumption that the interior potential vorticity and the buoyancy frequency are constant. 
It is an evolution equation for the temperature at the boundary in the vertical direction 
and is known to be an important model in geophysical dynamics. 
For a detailed derivation of the equation and its physical background, we refer the reader to the papers~\cite{Co_2002, He_Pi_Ga_Sw_1995, Pe_1979}. 
From a mathematical point of view, the surface quasi-geostrophic equation has similar structure to the three-dimensional Euler and Navier-Stokes equations (see~\cite{Co_Ma_Ta_1994, Co_Ma_Ta_1994_2}). 
Also, when $\alpha = 2$, the scale-critical spaces of \eqref{SQG} are the same as those of the two-dimensional incompressible Navier-Stokes equations. 

%%scaleについて%% 
We briefly discuss the scale-critical spaces of \eqref{SQG}. 
If $\theta$ is a solution of \eqref{SQG}, 
then for $\lambda>0$, 
\begin{equation*}
  \theta_{\lambda}(t,x) := \lambda^{\alpha-1}\theta(\lambda^{\alpha}t, \lambda x)
\end{equation*}
is also a solution of \eqref{SQG}. 
Namely, \eqref{SQG} is invariant under this scaling. 
Then, if a function space $X$ satisfies 
\begin{equation*}
  {\|\theta_{\lambda}(0,\cdot)\|}_{X} = {\|\theta(0,\cdot)\|}_{X}, 
\end{equation*}
then $X$ is called a scale-critical space of \eqref{SQG}. 
It is known that scale-critical spaces play an important role in the study of well-posedness for initial value problems of partial differential equations (see, e.g.,\cite{Ko_Ta_2001, Bo_Pa_2008}). 
When $1\leq \alpha \leq 2$, the scale-critical space in the framework of Lebesgue spaces is $L^{\frac{2}{\alpha-1}}(\mathbb{R}^{2})$. 
In particular, when $\alpha=1$, the scale-critical space is $L^{\infty}$. 
On the other hand, in the Besov space setting, the homogeneous Besov norm ${\|\cdot\|}_{\dot{B}_{p,q}^{s}(\mathbb{R}^{d})}$ satisfies the following scaling property: 
\begin{equation*}
  c\lambda^{s-\frac{d}{p}}{\|f\|}_{\dot{B}_{p,q}^{s}(\mathbb{R}^{d})} 
  \leq {\|f(\lambda \cdot)\|}_{\dot{B}_{p,q}^{s}(\mathbb{R}^{d})} 
  \leq C\lambda^{s-\frac{d}{p}}{\|f\|}_{\dot{B}_{p,q}^{s}(\mathbb{R}^{d})}. 
\end{equation*}
Thus, $\dot{B}_{p,q}^{1 - \alpha +\frac{2}{p}}(\mathbb{R}^{2})$ is a scale critical space of \eqref{SQG}. 
In this paper, we also refer to the non-homogeneous Besov space $B_{p,q}^{1 - \alpha +\frac{2}{p}}(\mathbb{R}^{2})$ as a scale-critical space of \eqref{SQG}. 

%%先行研究(well-posedness)%%
We begin by recalling several known results. 
Here, we consider $0< \alpha \leq 2$. 
From the balance between the nonlinearity and the dissipation, 
the cases $\alpha>1$, $\alpha = 1$, $\alpha<1$ are called the sub-critical case, critical case and super-critical case respectively. 
In the sub-critical case, the global-in-time regularity for large initial data was obtained by Constantin and Wu~\cite{Co_Wu_1999}. 
Also, Carrillo and Ferreira~\cite{Ca_Fe_2008} proved the global well-posedness of \eqref{SQG} in the scale-critical Lebesgue spaces $L^{\frac{2}{\alpha-1}}(\mathbb{R}^2)$. 
In the critical case, Constantin, Cordoba and Wu~\cite{Co_Co_Wu_2001} proved the global existence and regularity for small data. 
Later, large initial data case is solved by Kiselev, Nazarov and Volberg~\cite{Ki_Na_Vo_2007} and Caffarelli and Vasseur~\cite{Ca_Va_2010}. 
On the other hand, in the super-critical case, global-in-time regularity for large initial data still remains an open problem. 
For small initial data, global regularity is known (see e.g.~\cite{Co_Vi_2016}). 

%%先行研究(uniqueness)%%
Concerning the uniqueness of the solution, in the sub-critical case, 
the uniqueness in the scale-critical Lebesgue space $C([0,T]; L^{\frac{2}{\alpha-1}})$ was established by Ferreira~\cite{Fe_2011} ($1<\alpha<2$) and Iwabuchi and Ueda~\cite{I_U_2024} ($\alpha=2$). 
Recently, for $0<\alpha\leq 2$, Iwabuchi and Okazaki~\cite{I_O_2026} proved that the uniqueness of solutions holds in scale-critical non-homogeneous Besov spaces $C([0,T]; B_{p,q}^{1-\alpha+\frac{2}{p}})$ 
with $2/(\alpha-1)\leq p\leq \infty$, $1\leq q < \infty$ and $1-\alpha+2/p>-1/2$. 
Note that $-1/2$ is characterized as the least regularity for which the nonlinear term in \eqref{SQG} is well defined. 
As for non-uniqueness, if $0<\alpha<3/2$, then Buckmaster, Shkoller and Vicol~\cite{Bu_Sh_Vi_2019} proved the non-uniqueness of weak solutions in $L_{\text{loc}}^2(0,T; \dot{H}^{-\frac{1}{2}}(\mathbb{T}^2))$. 

%%先行研究(Navier-Stokesのuniqueness)%%
We also refer to some literature on the Navier-Stokes equations. 
For the two-dimensional incompressible Navier-Stokes equations, 
it is a classical result that the weak solution ${\bf u}\in L^\infty(0,T; L^2)\cap L^2(0,T; \dot{H}^{1})$ is unique if ${\bf u}_0\in L^2$ (see, e.g.,~\cite{Te_1995} and the references therein). 
The uniqueness in the scale-critical Lebesgue space $C([0,T]; L^{2})$ does not seem to be resolved. 
Regarding the non-uniqueness result, Cheskidov and Luo~\cite{Ch_Lu_2023} established that the non-uniqueness in $C([0,T]; L^{p}(\mathbb{T}^{2}))$ with $1\leq p <2$. 
In the case when the space dimension is three, the uniqueness of the solution in the scale-critical space $C([0,T];L^{3})$ 
was proved by Meyer~\cite{Me_1997}, Furioli, Lemari\'{e}-Rieusset and Terraneo~\cite{Fu_Le_Te_1997} and Monniaux \cite{Mo_1999}. 
Also, Lions and Masmoudi~\cite{Li_Ma_2001} proved that the uniqueness in $L^{3}$ using a dual problem. 
The non-uniqueness in $C([0,T]; L^{2}(\mathbb{T}^{3}))$ was shown by Buckmaster and Vicol~\cite{Bu_Vi_2019}. 
Recently, if $d \geq 3$, then Fujii~\cite{Fu_2026} established that the non-uniqueness of the $d$-dimensional incompressible Navier-Stokes equations 
in scale-critical homogeneous Besov space $C([0,T]; B_{p,q}^{-1 + \frac{d}{p}}(\mathbb{R}^{d}))$, where $B_{p,q}^{-1 + \frac{d}{p}}(\mathbb{R}^{d})$ is strictly larger than $L^{d}(\mathbb{R}^{d})$. 
For solutions that are not continuous in time, 
if $d\geq 4$, then the uniqueness of the $n$-dimensional incompressible Navier-Stokes equations in $L^{\infty}(0,T; L^{d})$ is proved in~\cite{Li_Ma_2001}. 

%%今回の主定理%% 
In this work, when the sub-critical case, we prove that solutions of \eqref{SQG} are unique in scale-critical Lebesgue space $L^{\infty}(0,T; L^{\frac{2}{\alpha-1}})$ without smallness assumptions. 
In addition, we include the critical case $\alpha = 1$ and establish the uniqueness of solutions of \eqref{SQG} in the scale-critical non-homogeneous Besov spaces $B_{p,q}^{1 - \alpha + \frac{2}{p}}$. 

We first present the setting for the main result in the Lebesgue space setting. 
%%解の定義%% 
In this paper, the solution of \eqref{SQG} is defined as follows. 
\begin{dfn}\label{0215-1}
  Let $1<\alpha\leq 2$, $T>0$ and 
  $\theta_0\in L^{\frac{2}{\alpha-1}}$. 
  If $\theta:(0,T)\times\mathbb{R}^2\to\mathbb{R}$ satisfies 
  \begin{equation}\label{0203-1}
     \begin{cases}
      \theta \in L^\infty(0,T; L^{\frac{2}{\alpha-1}}), \\
      \subscripts{\mathcal{S}'}{\displaystyle\big\langle\theta(t), \phi \big\rangle}{\mathcal{S}}
      =\subscripts{\mathcal{S}'}{\big\langle e^{-t\Lambda^\alpha}\theta_0,\phi\big\rangle}{\mathcal{S}}
      +\subscripts{\mathcal{S}'}{\left\langle \int_{0}^{t}e^{-(t-s)\Lambda^\alpha}({\bf u}\theta) ~{\rm d}s,\nabla\phi \right\rangle}{\mathcal{S}}\\ 
      \text{for a.e.}\ t\in (0,T) \text{ and for any }\phi\in\mathcal{S}(\mathbb{R}^2),
     \end{cases}
  \end{equation}
  then we call $\theta$ a solution of \eqref{SQG}. 
\end{dfn}

\begin{rem}
  \begin{enumerate}
    \item The equality~\eqref{0203-1} means that $\theta$ satisfies the integral equation derived from \eqref{SQG} in the sense of distributions. 
    This is due to the fact that condition $\theta \in L^\infty(0,T; L^{\frac{2}{\alpha-1}})$ alone 
    is not sufficient to ensure that the nonlinear term in the integral equation is well-defined. 
    \item In fact, by the function $t\mapsto \subscripts{\mathcal{S}'}{\langle\theta(t), \phi \rangle}{\mathcal{S}}$ is continuous, 
    we can define the equation~\eqref{0203-1} for any $t\in [0,T]$. 
    Moreover, since $\mathcal{S}(\mathbb{R}^2)$ is dense in $L^{p}$ ($1\leq p < \infty$) and the smoothing effect of the fractional heat semigroup, 
    we can prove that if $\theta$ is an solution of \eqref{SQG}, then $\theta$ is weakly continuous in $L^{\frac{2}{\alpha-1}}$ on $[0,T]$. 
    Thus, we can define the initial value in the following sense, 
    \begin{equation*}
      \theta(t) \rightharpoonup \theta_0 \text{ in }L^{\frac{2}{\alpha-1}}, \text{ as }t\to 0. 
    \end{equation*}
  \end{enumerate}
\end{rem}

We next define the weak solution of \eqref{SQG}. 

\begin{dfn}\label{0204-2}
  Let $1<\alpha\leq 2$, $T>0$ and 
  $\theta_0\in L^{\frac{2}{\alpha-1}}$. 
  If $\theta:(0,T)\times\mathbb{R}^2\to\mathbb{R}$ satisfies 
  \begin{equation}\label{0213-1}
     \begin{cases}
      \theta \in L^\infty(0,T; L^{\frac{2}{\alpha-1}}), \\ 
      \displaystyle\int_{0}^{T}\hspace{-6pt}\Big(\hspace{-2pt}\subscripts{\mathcal{S}'}{\displaystyle\big\langle\theta(t), \partial_t \varphi(t) - \Lambda^{\alpha}\varphi(t) \big\rangle}{\mathcal{S}} 
      + \subscripts{\mathcal{S}'}{\displaystyle\big\langle{\bf u}(t)\theta(t), \nabla\varphi(t) \big\rangle}{\mathcal{S}}\hspace{-2pt}\Big){\rm d}t
      + \hspace{-2pt} \subscripts{\mathcal{S}'}{\displaystyle\big\langle\theta_0, \varphi(0) \big\rangle}{\mathcal{S}} \hspace{-3pt} =  0\\ 
      \text{for any }\varphi \in C^{\infty}([0,T]\times\mathbb{R}^2) 
      \text{ such that }\varphi(T)\equiv 0, \\ 
      \text{and for any }t\in [0,T], \varphi(t, \cdot)\in \mathcal{S}(\mathbb{R}^2), 
     \end{cases}
  \end{equation}
  then we call $\theta$ a weak solution of \eqref{SQG}. 
\end{dfn}

Actually, it is known that \eqref{0203-1} and \eqref{0213-1} are equivalent. 

%%2つの解の同値性%% 
\begin{prop}\label{0204-4}
  Let $1<\alpha \leq 2$, $T>0$, and $\theta \in L^\infty (0,T; L^{\frac{2}{\alpha-1}})$. 
  Then $\theta$ is a solution of the integral equation of \eqref{SQG} in the sense of Definition~\ref{0215-1} 
  if and only if $\theta$ is a weak solution of \eqref{SQG}. 
\end{prop}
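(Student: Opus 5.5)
First I make sure every term is well defined. With $p=\frac{2}{\alpha-1}$, the Riesz transforms are bounded on $L^p$ for $1<p<\infty$. For $\alpha=2$ we have $p=2$, and in all cases $2\le p<\infty$. Hence $\|{\bf u}\|_{L^p}\le C\|\theta\|_{L^p}$, and ${\bf u}\theta\in L^\infty(0,T;L^{p/2})$ by H\"older, with $1\le p/2<\infty$. So both the pairing $\langle {\bf u}\theta,\nabla\varphi\rangle$ and the Duhamel integral make sense. For the latter I use $\|\nabla e^{-\tau\Lambda^\alpha}f\|_{L^{p}}\le C\tau^{-\frac1\alpha-\frac{1}{\alpha}(\frac{2}{p/2}-\frac2p)}\|f\|_{L^{p/2}}$. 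The exponent is $\frac{1}{\alpha}(1+\frac2p)=\frac{1}{\alpha}\cdot\alpha=1$, which is not integrable. This is why the definition pairs the integral against $\nabla\phi$ with the derivative on the test function: $\langle\int_0^t e^{-(t-s)\Lambda^\alpha}({\bf u}\theta)\,ds,\nabla\phi\rangle=\int_0^t\langle {\bf u}\theta(s),\nabla e^{-(t-s)\Lambda^\alpha}\phi\rangle ds$, which is bounded by $t\,\|{\bf u}\theta\|_{L^\infty L^{p/2}}\|\nabla\phi\|_{L^{(p/2)'}}$. I will work with the integral equation only in this weak form.

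\emph{Integral $\Rightarrow$ weak.} Take $\varphi$ as in Definition~\ref{0204-2}. I test \eqref{0203-1} against $\phi=\partial_t\varphi(t)-\Lambda^\alpha\varphi(t)$; note $\Lambda^\alpha\varphi(t)$ is not Schwartz, but it lies in every $L^r$ together with its derivatives, so the identity extends by density. Then I integrate over $t\in(0,T)$ and use Fubini. The linear part gives $\int_0^T\langle \theta_0, e^{-t\Lambda^\alpha}(\partial_t-\Lambda^\alpha)\varphi(t)\rangle dt=\int_0^T\langle\theta_0,\frac{d}{dt}[e^{-t\Lambda^\alpha}\varphi(t)]\rangle dt=-\langle\theta_0,\varphi(0)\rangle$, since $\varphi(T)=0$. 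The Duhamel part gives, for each $s$, $\int_s^T\langle {\bf u}\theta(s),\nabla \frac{d}{dt}[e^{-(t-s)\Lambda^\alpha}\varphi(t)]\rangle dt=-\langle{\bf u}\theta(s),\nabla\varphi(s)\rangle$. Combining these yields \eqref{0213-1}. The Fubini steps are justified by the bounds from the first paragraph.

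\emph{Weak $\Rightarrow$ integral.} Fix $\phi\in\mathcal S$ and $t_0\in(0,T)$. The natural test function is $\varphi(s)=\eta_\varepsilon(s)\,e^{-(t_0-s)\Lambda^\alpha}\phi$ for $s\le t_0$, where $\eta_\varepsilon$ is a smooth cutoff equal to $1$ on $[0,t_0-\varepsilon]$ and $0$ after $t_0$. This function solves $\partial_s\psi-\Lambda^\alpha\psi=0$ away from the cutoff, but it is not in $\mathcal S$ (because $|\xi|^\alpha$ is nonsmooth at $0$) and not smooth in $s$. I therefore first extend \eqref{0213-1} by density to test functions in $W^{1,1}(0,T;W^{1,r})\cap L^1(0,T;W^{\alpha,r})$ for suitable $r$; all terms are continuous in that topology by the first paragraph. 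Inserting $\varphi$, the dissipative terms cancel, and the remaining terms are $-\int_0^T\eta_\varepsilon'(s)\langle\theta(s),e^{-(t_0-s)\Lambda^\alpha}\phi\rangle ds$, the nonlinear term with weight $\eta_\varepsilon$, and $\langle\theta_0,e^{-t_0\Lambda^\alpha}\phi\rangle$.

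The main obstacle is the limit $\varepsilon\to0$ in the first of these terms, because no time continuity of $\theta$ is assumed. I handle it through Lebesgue points. Write $\eta_\varepsilon'=-\frac1\varepsilon\chi_{[t_0-\varepsilon,t_0]}$, mollified. The map $s\mapsto e^{-(t_0-s)\Lambda^\alpha}\phi$ is continuous into $L^{p'}$, and $\theta\in L^\infty L^p$. So this term converges to $\langle\theta(t_0),\phi\rangle$ whenever $t_0$ is a Lebesgue point of $s\mapsto\langle\theta(s),\phi\rangle$. To get a null set independent of $\phi$, I use a countable dense subset of $\mathcal S$ and $L^{p'}$, together with the uniform bound in $L^p$. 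The nonlinear term converges by dominated convergence, which gives \eqref{0203-1} for a.e.\ $t_0$ and every $\phi$.
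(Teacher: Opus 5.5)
Your proof is correct, but it takes a different route from the paper. The paper follows Fabes, Jones and Rivi\`ere and works with regularized objects. It tests \eqref{0203-1} against $\phi_j(x-\cdot)$ to get the frequency-localized identity for $\theta_N=\sum_{|j|\le N}\phi_j*\theta$. It then replaces $\theta$ by smooth approximations $\theta^{(\varepsilon)}\in C_c^\infty((0,T)\times\mathbb{R}^2)$. For the resulting smooth, frequency-truncated Duhamel expression the mild and differential formulations are classically equivalent, and the paper lets $\varepsilon\to0$, $N\to\infty$. The converse direction uses $\varphi(t,y)=\psi(t)\phi_j(x-y)$ in the same spirit.

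You argue by transposition instead. In one direction you use Fubini together with $\frac{d}{dt}[e^{-(t-s)\Lambda^\alpha}\varphi(t)]=e^{-(t-s)\Lambda^\alpha}(\partial_t-\Lambda^\alpha)\varphi(t)$. In the other you use the backward-semigroup test function $\eta_\varepsilon(s)e^{-(t_0-s)\Lambda^\alpha}\phi$ and a Lebesgue-point limit.

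What each route buys:
\begin{itemize}
\item The paper's frequency localization lets it work with Schwartz test functions and smooth objects throughout, since $|\xi|^\alpha$ is smooth on the support of $\widehat{\phi_j}$. No enlargement of the test class is needed. The price is passing to the limit in the regularized nonlinearity. In the weak $\Rightarrow$ integral direction it must also account for the error terms produced when $\theta^{(\varepsilon)}$ is inserted into the equation, namely in the product $\mathbf{u}\theta$ and in the initial value, since $\theta^{(\varepsilon)}$ vanishes near $t=0$. The paper treats these points only briefly.
\item Your approach never regularizes the product and makes the role of the missing time continuity explicit. Its cost is the density extension to test functions such as $e^{-\tau\Lambda^\alpha}\phi\notin\mathcal{S}$. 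You correctly identify this, and spatial cutoffs plus the bounds of your first paragraph justify it.
\end{itemize}

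Two small remarks. In the integral $\Rightarrow$ weak direction you also need your countable-dense-family device: \eqref{0203-1} holds only for a.e.\ $t$, while you test with the $t$-dependent function $(\partial_t\varphi-\Lambda^\alpha\varphi)(t)$. Also, with a smooth $\eta_\varepsilon$ vanishing after $t_0$, your test function is in fact smooth in $s$; only its spatial decay falls short of $\mathcal{S}$.
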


Proposition~\ref{0204-4} can be proved using the same method as the corresponding equivalence proposition for the Navier-Stokes equations in Fabes, Jones and Rivi\'ere~\cite{Fa_Jo_Ri_1972} (see also~\cite{Li_Ma_2001}). 
For the convenience of the reader, we provide a proof of Proposition~\ref{0204-4} in Appendix~\ref{0213-2}. 

We state our main results in Lebesgue spaces. 
\vspace{-4pt}
%%主定理(Lebesgue)%% 
\begin{thm}\label{0204-1}
  Let $1<\alpha \leq 5/3$ and $T>0$. 
  Let $\theta^{(1)}, \theta^{(2)} \in L^{\infty}(0,T; L^{\frac{2}{\alpha-1}})$ be solutions of the integral equation of \eqref{SQG} in the sense of Definition~\ref{0215-1}
  with same initial data $\theta_0 \in L^{\frac{2}{\alpha-1}}$. 
  Then $\theta^{(1)}(t) = \theta^{(2)}(t)$ in $L^{\frac{2}{\alpha-1}}$ for a.e. $t\in (0,T)$. 
\end{thm}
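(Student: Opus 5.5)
We plan to argue by energy methods in the spirit of Lions--Masmoudi. Set $p=\frac{2}{\alpha-1}$, so $\alpha\le 5/3$ means $p\ge 3$. Write $w=\theta^{(1)}-\theta^{(2)}$, $\mathbf{u}^{(i)}=\nabla^\perp\Lambda^{-1}\theta^{(i)}$, and $\mathbf{v}=\nabla^\perp\Lambda^{-1}w$. Taking the difference of the integral equations in Definition~\ref{0215-1} gives
\[
w(t)=\int_0^t e^{-(t-s)\Lambda^\alpha}\nabla\cdot\big(\mathbf{v}\theta^{(1)}+\mathbf{u}^{(2)}w\big)\,{\rm d}s ,
\]
with zero initial data. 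The goal is an $L^2$-type energy inequality for $w$ that can be closed with Gronwall. The difficulty is that $\theta^{(i)}\in L^\infty(0,T;L^p)$ only, so we cannot test with $w$ directly. Instead we split each solution as $\theta^{(i)}=e^{-t\Lambda^\alpha}\theta_0+\tilde\theta^{(i)}$, or more flexibly we split $\theta_0=a+b$ with $a\in\mathcal S$ and $\|b\|_{L^p}$ small. The free part then cancels in $w$, and we show that the Duhamel parts $\tilde\theta^{(i)}$ are more regular than $L^p$.

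\textbf{Step 1 (regularity of the Duhamel term by iteration).} The product $\mathbf{u}\theta$ lies in $L^\infty_tL^{p/2}$. By the smoothing estimate $\|e^{-t\Lambda^\alpha}\nabla f\|_{L^r}\lesssim t^{-\frac1\alpha-\frac2\alpha(\frac2p-\frac1r)}\|f\|_{L^{p/2}}$, the Duhamel term belongs to $L^\infty_tL^r$ for $r$ in a range below $p$. In particular it lands in $L^2$, or at least in $L^\infty_t(L^{q}\cap L^p)$ for some $q<p$. Since $p\ge 3$, we have $p/2\ge 3/2$, and one or two iterations of the integral equation reach $L^2$. Re-inserting the improved integrability into the equation should then give $w\in L^\infty(0,T;L^2)\cap L^2(0,T;\dot H^{\alpha/2})$, at least on a short time interval. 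A parabolic smoothing estimate in the Besov or maximal-regularity sense gives the $\dot H^{\alpha/2}$ gain from a forcing $\nabla\cdot F$ with $F\in L^2_t\dot H^{1-\alpha/2}$. To obtain it we need $\mathbf{v}\theta^{(1)}$ and $\mathbf{u}^{(2)}w$ in $L^2_t\dot H^{1-\alpha/2}$. By Sobolev embedding $\dot H^{\alpha/2}\subset L^{4/(2-\alpha)}$ and Hölder with $\theta\in L^p$, the product lies in $L^{s}$ with $\frac1s=\frac{2-\alpha}{4}+\frac{\alpha-1}{2}=\frac{\alpha}{4}$, and $L^{4/\alpha}\subset \dot H^{1-\alpha/2}$ duals correctly. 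This exponent bookkeeping is where I expect the restriction $\alpha\le5/3$ to enter.

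\textbf{Step 2 (energy inequality).} For $w\in L^\infty L^2\cap L^2\dot H^{\alpha/2}$, we mollify in space and use Proposition~\ref{0204-4} (weak formulation) to justify
\[
\|w(t)\|_{L^2}^2+2\int_0^t\|\Lambda^{\alpha/2}w\|_{L^2}^2
\le 2\int_0^t\Big|\int \mathbf{v}\theta^{(1)}\cdot\nabla w\Big|\,{\rm d}s ,
\]
where the term $\int (\mathbf{u}^{(2)}\cdot\nabla w)w=0$ vanishes by incompressibility after the commutator limit. Passing to the limit in the mollified trilinear terms requires exactly the Step~1 integrabilities, and this is the main obstacle: without time continuity, the solution is not small near $t=0$, so no absorption from smallness is available.

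\textbf{Step 3 (closing).} To handle the large data, we split $\theta^{(1)}=\theta^{(1)}_{\mathrm{sm}}+\theta^{(1)}_{\mathrm{reg}}$. Here $\theta^{(1)}_{\mathrm{sm}}$ has small $L^\infty_tL^p$ norm: it comes from $e^{-t\Lambda^\alpha}b$ together with the Duhamel part, which is small on short times in a slightly weaker norm via the iteration. The remaining piece $\theta^{(1)}_{\mathrm{reg}}$ is bounded, e.g. in $L^\infty_{t,x}$ or $L^2_tL^\infty$ after iteration. For the small part we use
\[
\Big|\int \mathbf{v}\theta_{\mathrm{sm}}\cdot\nabla w\Big|\le \|\theta_{\mathrm{sm}}\|_{L^p}\|\mathbf{v}\|_{L^{4/(2-\alpha)}}\|\nabla w\|_{\dot H^{-\alpha/2}},
\]
which is bounded by $\varepsilon\|\Lambda^{\alpha/2}w\|_{L^2}^2$ and absorbed into the dissipation. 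For the regular part, Young's inequality gives $C\|\theta_{\mathrm{reg}}\|^{\gamma}\|w\|_{L^2}^2+\tfrac12\|\Lambda^{\alpha/2}w\|_2^2$. Gronwall then yields $w=0$ on a short interval $[0,T_0]$. Finally we iterate in time, using the weak continuity from the Remark to restart at $T_0$ with the common datum; the step lengths depend only on the data through compactness of $\{\theta(t)\}$ in the splitting, which we expect to need care but which is uniform since the $L^\infty_tL^p$ norms are bounded.
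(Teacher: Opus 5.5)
Your proposal has two genuine gaps, and they are precisely the two points the paper's argument is built around.

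First, the $L^2$ energy level cannot work here. For $\alpha<2$, the coefficient is only in $L^p$ with $p=2/(\alpha-1)$. The term to control is $\int \theta^{(1)}\,\mathbf v\cdot\nabla w\,{\rm d}x$ (or the same with $\theta^{(2)}$). Since $\theta^{(1)}$ has no regularity, H\"older's inequality forces a full derivative of $w$ into a Lebesgue space: with $\mathbf v\in L^{4/(2-\alpha)}$ you would need $\nabla w\in L^{4/(4-\alpha)}$, which the dissipation $\|\Lambda^{\alpha/2}w\|_{L^2}$ does not provide. The bound in your Step 3, with $\|\nabla w\|_{\dot H^{-\alpha/2}}$ on the right, is not a valid inequality. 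Splitting $\theta^{(1)}$ into small and regular parts does not help, because the small part is still rough.

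The same obstruction sits in Step 1. To place $w$ in $L^2(0,T;\dot H^{\alpha/2})$ you would need $\mathbf v\theta^{(1)}\in L^2(0,T;\dot H^{1-\alpha/2})$. That is a positive-regularity space, which a product with an $L^p$ function does not reach. The embedding you quote is also reversed: $\dot H^{1-\alpha/2}\subset L^{4/\alpha}$, not conversely. The paper's introduction notes that the $L^2$-level class follows the two-dimensional fractional Navier--Stokes count, which only reaches $\alpha\le 4/3$.

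The missing idea is the SQG-specific $\dot H^{-1/2}$ energy, obtained by testing with $\Lambda^{-1}w$. Then the term $\int \big((\nabla^\perp\Lambda^{-1}w)\cdot\nabla\big)\theta^{(1)}\,\Lambda^{-1}w\,{\rm d}x$ vanishes identically, since $\nabla^\perp\Lambda^{-1}w\cdot\nabla\Lambda^{-1}w=0$. The remaining term $\int \Lambda^{-1}\bar\theta\,\big((\nabla^\perp\Lambda^{-1}w)\cdot\nabla\big)w\,{\rm d}x$ carries an extra $\Lambda^{-1}$ on the rough factor. It is therefore bounded by $C\|\bar\theta\|_{L^p}\|w\|_{\dot H^{\frac{\alpha}{2}-\frac12}}^2$, using $\dot H^{\frac{\alpha}{2}-\frac12}\subset L^{\frac{4}{3-\alpha}}$. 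The class to justify becomes $L^\infty(0,T;\dot H^{-1/2})\cap L^2(0,T;\dot H^{\frac{\alpha}{2}-\frac12})$, whose forcing has negative regularity. The Duhamel/iteration count in that class is what produces $\alpha\le 5/3$.

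Second, you never introduce a time-continuous solution, and Step 3 cannot produce the needed smallness without one. Absorbing the bad term requires splitting its coefficient into two parts:
\begin{itemize}
\item a part that is small in $L^p$ \emph{uniformly in} $t$;
\item a part with uniformly bounded smoother norms.
\end{itemize}
That is a strong precompactness property of the coefficient's range in $L^p$. Boundedness in $L^\infty(0,T;L^p)$ gives no such compactness. Moreover, the Duhamel part $\theta^{(1)}-e^{-t\Lambda^\alpha}\theta_0$ of a merely bounded-in-time solution is not small in $L^p$ as $t\to0$; it only tends to zero weakly. Smallness in a weaker, subcritical norm cannot be absorbed into a scale-critical term. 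For the same reason, your restart scheme has no uniform control of the step lengths.

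The paper resolves this in three steps:
\begin{itemize}
\item It takes $\bar\theta\in C([0,T];L^p)$ from the global existence result of Carrillo--Ferreira.
\item It compares each $\theta^{(i)}$ with $\bar\theta$ separately; the $\dot H^{-1/2}$ structure removes $\theta^{(i)}$ from the bad term.
\item It splits $\bar\theta=\bar\theta_{>N}+\bar\theta_{\le N}$ in Littlewood--Paley frequencies, uniformly on $[0,T]$. The low-frequency part gives lower-order terms handled by interpolation, and Gronwall closes on the whole interval with no restart.
\end{itemize}
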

\vspace{-3pt}
As in the previous work~\cite{I_O_2026}, we can extend Theorem~\ref{0204-1} to Besov spaces with negative regularity index. 
To state this result, let us recall the definition of non-homogeneous Besov spaces. 
We refer to the book by Bahouri, Chemin and Danchin~\cite{Ba_Ch_Da_2011}. 
Let us first introduce Littlewood-Paley decomposition. 
\vspace{-2pt}
%%Littlewood-Paley decompositionの定義%% 
\begin{dfn}
  Let $\{\psi\}\cup\{\phi_j\}_{j\in\mathbb{Z}}\subset\mathcal{S}(\mathbb{R}^d)$ be such that 
  \begin{equation*}
    \text{supp }\widehat{\psi}\subset\{\xi\in\mathbb{R}^d\ | |\xi| \leq 4/3\}, 
  \end{equation*}
  \begin{equation*}
    \text{supp } \widehat{\phi}_{j}\subset\{\xi\in\mathbb{R}^d\ |\ 3/8 \cdot 2^{j-1}\leq |\xi| \leq 4/3 \cdot 2^{j-1}\} 
    \text{ for any }j\in\mathbb{Z}, 
  \end{equation*}
  \begin{equation*}
    \widehat{\psi}(\xi)+\sum_{j\in\mathbb{N}}\widehat{\phi_j}(\xi)=1 \text{ for any }\xi\in\mathbb{R}^d 
    \text{ and }
    \sum_{j\in\mathbb{Z}}\widehat{\phi_j}(\xi)=1 \text{ for any }\xi\in\mathbb{R}^d\backslash\{0\}. 
  \end{equation*}
\end{dfn}
\vspace{-2pt}
Based on this, we define the non-homogeneous Besov spaces. 
\vspace{-2pt}
%%non-homogenous Besovの定義%% 
\begin{dfn}
  Let $s\in\mathbb{R}$\ and $1\leq p,q \leq \infty$,\ we define the non-homogeneous Besov spaces as follows. 
  \begin{equation*}
    B^s_{p,q}=B^s_{p,q}(\mathbb{R}^d):=\{f\in \mathcal{S}'(\mathbb{R}^d)\ |\ {\|f\|}_{B^s_{p,q}}<\infty\}, 
  \end{equation*}
  where
  \begin{equation*}
    {\|f\|}_{B^s_{p,q}}:={\|\psi*f\|}_{L^p}+{\left\|\left\{2^{sj}{\|\phi_j*f\|}_{L^p}\right\}_{j\in\mathbb{N}}\right\|}_{l^q}. 
  \end{equation*}
\end{dfn}
\vspace{-2pt}
When attempting to define a solution of \eqref{SQG} in the sense of Definition~\ref{0215-1} in Besov spaces with negative regularity index, 
we need to ensure that the product $(\nabla^{\perp}\Lambda^{-1}\theta)\theta$ is well-defined. 
Indeed, it is known that if $s<0$, then there exist some $f,g \in B_{p,q}^{s}$ such that 
\begin{equation*}
  fg\notin \mathcal{S}'. 
\end{equation*}
At this point, we consider the sum of two products 
\begin{equation*}
  (\nabla^{\perp}\Lambda^{-1}f)g + (\nabla^{\perp}\Lambda^{-1}g)f. 
\end{equation*}
It can be written in teams of first-order derivatives (see~\cite{I_U_2024}). 
As a result, we can define $(\nabla^{\perp}\Lambda^{-1}f)g + (\nabla^{\perp}\Lambda^{-1}g)f$ in $\mathcal{S}'$ for any $f,g\in B_{p,q}^{s}$ 
with $-1/2<s<0$, $2\leq p \leq \infty$ and $1\leq q\leq \infty$ (see~\cite{I_O_2026}). 

Thus, we can define a solution of \eqref{SQG} (as well as a weak solution) in scale-critical homogeneous Besov spaces 
provided that the product $(\nabla^{\perp}\Lambda^{-1}\theta)\theta$ is well-defined. 
\vspace{-2pt}
%%Besov版の解のDef%%
\begin{dfn}
  Let $1\leq\alpha\leq 2$, $T>0$, $1\leq p,q \leq \infty$ with $1 - \alpha + 2/p > -1/2$ and 
  $\theta_0\in B_{p,q}^{1-\alpha + \frac{2}{p}}$. 
  If $\theta:(0,T)\times\mathbb{R}^2\to\mathbb{R}$ satisfies 
  \begin{equation*}
     \begin{cases}
      \theta \in L^\infty(0,T; B_{p,q}^{1-\alpha + \frac{2}{p}}), \\
      \subscripts{\mathcal{S}'}{\displaystyle\big\langle\theta(t), \phi \big\rangle}{\mathcal{S}}
      =\subscripts{\mathcal{S}'}{\big\langle e^{-t\Lambda^\alpha}\theta_0,\phi\big\rangle}{\mathcal{S}}
      +\subscripts{\mathcal{S}'}{\left\langle \int_{0}^{t}e^{-(t-s)\Lambda^\alpha}({\bf u}\theta) ~{\rm d}s,\nabla\phi \right\rangle}{\mathcal{S}}\\ 
      \text{for a.e.}\ t\in (0,T) \text{ and for any }\phi\in\mathcal{S}(\mathbb{R}^2),
     \end{cases}
  \end{equation*}
  then we call $\theta$ a solution of \eqref{SQG}. 
\end{dfn}

Then, in the sub-critical case, the following results holds. 

%%Besov版の主定理, end-point case%% 
\begin{thm}\label{0212-1}
  Let $1<\alpha \leq 5/3$ and $T>0$. 
  Let $\theta^{(1)}, \theta^{(2)} \in L^{\infty}(0,T; B_{\frac{4}{\alpha-1},2}^{-\frac{1}{2}(\alpha - 1)})$ be solutions of the integral equation of \eqref{SQG} in the sense of Definition~\ref{0215-1}
  with same initial data $\theta_0 \in B_{\frac{4}{\alpha-1},2}^{-\frac{1}{2}(\alpha - 1)}$. 
  Then $\theta^{(1)}(t) = \theta^{(2)}(t)$ in $B_{\frac{4}{\alpha-1},2}^{-\frac{1}{2}(\alpha - 1)}$ for a.e. $t\in (0,T)$. 
\end{thm}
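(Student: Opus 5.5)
Set $\theta := \theta^{(1)} - \theta^{(2)}$, $s_0 := -\frac{1}{2}(\alpha-1)$ and $p := \frac{4}{\alpha-1}$, so that $\theta \in L^\infty(0,T;B^{s_0}_{p,2})$. The plan is the Lions--Masmoudi energy method, carried out on $\theta$ at the level of $\dot H^{-1/2}$, which is the natural energy space of \eqref{SQG}. First, $\theta$ satisfies
\[
\theta(t) = \int_0^t \nabla\cdot e^{-(t-s)\Lambda^\alpha}\big({\bf u}^{(1)}\theta + {\bf u}\,\theta^{(2)}\big)\,{\rm d}s ,
\]
with ${\bf u}=\nabla^\perp\Lambda^{-1}\theta$; the bilinear terms are defined via the symmetrized product of~\cite{I_O_2026}, since $s_0 > -1/2$ exactly when $\alpha<2$. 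The zero initial datum is what gives regularity. Using the smoothing estimate $\|\nabla e^{-t\Lambda^\alpha}f\|_{B^{s+\sigma}_{r,2}}\lesssim t^{-(1+\sigma)/\alpha}\|f\|_{B^{s}_{r,2}}$ together with paraproduct bounds of the type $\|fg\|_{B^{2s_0}_{p/2,2}}\lesssim\|f\|_{B^{s_0}_{p,2}}\|g\|_{B^{s_0}_{p,2}}$ (valid since $2s_0>-1$), I get
\[
\theta \in L^\infty(0,T;B^{2s_0+\alpha-1-\varepsilon}_{p/2,2}) = L^\infty(0,T;B^{-\varepsilon}_{p/2,2}),
\]
a gain of regularity. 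I then iterate: put this improved information on one factor of the Duhamel term and keep the other factor in $L^\infty B^{s_0}_{p,2}$. Each round raises the regularity of $\theta$ by about $\alpha-1+s_0 = (\alpha-1)/2$, at the price of integrability. After finitely many rounds I expect
\[
\theta \in L^2(0,T;\dot H^{\frac{\alpha-1}{2}})\cap L^\infty(0,T;\dot H^{-1/2}),
\]
together with $\theta(t)\to0$ in $\dot H^{-1/2}$ as $t\to0$, where the time continuity comes from the integral formula with vanishing data. The restriction $\alpha\le 5/3$ should enter here. For $\alpha\le5/3$ one has $p\ge6$, and we need $p/2\ge 2$ for the Besov embedding into $L^2$-based spaces, together with enough room that the time singularities $t^{-(1+\sigma)/\alpha}$ stay integrable at each step.

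Second, with $\theta$ in the energy class I justify the energy equality for the difference equation, i.e.
\[
\tfrac12\|\theta(t)\|_{\dot H^{-1/2}}^2+\int_0^t\|\theta\|_{\dot H^{\frac{\alpha-1}{2}}}^2\,{\rm d}s
=-\int_0^t\big\langle \nabla\cdot({\bf u}^{(1)}\theta+{\bf u}\,\theta^{(2)}),\Lambda^{-1}\theta\big\rangle\,{\rm d}s .
\]
To do this I use Proposition~\ref{0204-4} to pass to the weak formulation, and then mollify in time and space so that $\Lambda^{-1}\theta_\epsilon$ can serve as a test function. The term with ${\bf u}\,\theta^{(2)}$ tested against $\nabla\Lambda^{-1}\theta$ vanishes by the SQG commutator structure $\langle {\bf u}\cdot\nabla\,\cdot\,,\cdot\rangle$ with ${\bf u}=\nabla^\perp\Lambda^{-1}\theta$, after rewriting it via the symmetric identity of~\cite{I_U_2024}. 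What remains is a trilinear form $\langle \theta^{(2)}\text{ or }\theta^{(1)},\ [\text{Riesz},\cdot]\text{-type pairing of }\theta,\theta\rangle$.

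Third, I estimate this trilinear form. Split $\theta^{(i)}=\theta^{(i)}_{\le N}+\theta^{(i)}_{>N}$ by frequency, where $N$ is chosen uniformly in time. This step is the main obstacle: $\theta^{(i)}$ is only $L^\infty_t$ and has no small norm, and since time continuity is not assumed, the high-frequency part cannot be made uniformly small in time by compactness of the trajectory. Instead I plan to bound the high part using the positive-time smoothing of each $\theta^{(i)}$ itself, via the same iteration applied to the Duhamel formula, which gives $\theta^{(i)}(t)\in B^{s_0+\delta}_{p,2}$ with bound $t^{-\delta/\alpha}$. Alternatively, one can exploit that the linear part $e^{-t\Lambda^\alpha}\theta_0$ is the only piece that is not small at high frequency, and it is uniformly controlled. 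The resulting bound should be
\[
\big|\langle\cdots\rangle\big| \le \tfrac12\|\theta\|_{\dot H^{\frac{\alpha-1}{2}}}^2 + C(N,t)\,\|\theta\|_{\dot H^{-1/2}}^2 ,
\]
with $C(N,\cdot)$ integrable. Gronwall's inequality combined with $\theta(0)=0$ in $\dot H^{-1/2}$ then gives $\theta\equiv0$, so that $\theta^{(1)}=\theta^{(2)}$ in $B^{s_0}_{p,2}$ for a.e.\ $t$.
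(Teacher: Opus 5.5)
\textbf{There is a genuine gap in your Step 3.}

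Your Steps 1 and 2 follow the paper's route. The iteration of Lemmas~\ref{0211-2}--\ref{0211-3} that puts the difference of two solutions into $L^\infty_t\dot H^{-1/2}\cap L^2_t\dot H^{\frac{\alpha-1}{2}}$ (Proposition~\ref{0211-1}) needs no time continuity. The cancellation $\theta^{(2)}\,\nabla^\perp\Lambda^{-1}\theta\cdot\nabla\Lambda^{-1}\theta=0$ is also the one the paper uses.

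The gap is the one you flag yourself. After that cancellation, the trilinear form still contains $\theta^{(1)}$, which is only in $L^\infty(0,T;B^{s_0}_{p,2})$. Neither of your remedies gives the uniform-in-time smallness of $\theta^{(1)}_{>N}$ needed to absorb that term into the dissipation.

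\emph{First remedy.} The smoothing bound $\|\theta^{(i)}(t)\|_{B^{s_0+\delta}_{p,2}}\lesssim t^{-\delta/\alpha}$ is not available for an arbitrary $L^\infty_t$ critical solution. At fixed integrability $p$, going from the product in $B^{2s_0}_{p/2}$ (through Bernstein and one derivative) to $B^{s_0+\delta}_{p}$ costs $\alpha+\delta$ derivatives. The Duhamel kernel is then $(t-s)^{-1-\delta/\alpha}$, which is not integrable. The iteration of Step 1 gains regularity only by lowering integrability, and that says nothing about high frequencies in $B^{s_0}_{p,2}$. Even if the bound held, it degenerates as $t\to0$: by scaling, the Gronwall coefficient you would get behaves like $t^{-1}$.

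\emph{Second remedy.} For $L^\infty_t$ input, the Duhamel part $\theta^{(i)}-e^{-t\Lambda^\alpha}\theta_0$ is only known to be bounded in the critical space. At the critical regularity you get only $\ell^\infty$ summability in $j$. Nothing makes $\sup_t$ of its high-frequency part small. This endpoint is exactly the difficulty of uniqueness without time continuity, so it cannot be bootstrapped away from a single rough solution.

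\textbf{The missing idea is to never let a rough solution enter the trilinear form.}

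Take, from the existence theory, a solution $\bar\theta\in C([0,T];B^{s_0}_{p,2})$ with the same data. Then prove $\theta^{(i)}=\bar\theta$ for $i=1,2$ separately. With $w=\theta^{(1)}-\bar\theta$, your own splitting reads $\bar{\bf u}\,w+(\nabla^\perp\Lambda^{-1}w)\,\theta^{(1)}$. The second term is the one that vanishes against $\nabla\Lambda^{-1}w$, so $\theta^{(1)}$ drops out and only $\bar\theta$ remains, as in \eqref{0204-5}.

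Since $q=2<\infty$ and $t\mapsto\bar\theta(t)$ is continuous, the trajectory $\{\bar\theta(t)\}_{t\in[0,T]}$ is compact in $B^{s_0}_{p,2}$. Hence $\sup_t\|\bar\theta_{>N}(t)\|_{B^{s_0}_{p,2}}\le\varepsilon$ for some $N$, and $\bar\theta_{\le N}$ is uniformly bounded in smoother spaces. The high part is absorbed into $\|w\|^2_{\dot H^{\frac{\alpha-1}{2}}}$. The low part gives
$C\|w\|_{L^2}\|w\|_{H^{-1/2}}\le\varepsilon\|w\|^2_{\dot H^{\frac{\alpha-1}{2}}}+C_\varepsilon\|w\|^2_{\dot H^{-1/2}}$.
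Gronwall with a constant coefficient then finishes.

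\textbf{Minor point.} The condition $p/2\ge2$ holds for every $\alpha\le2$, so it is not where $5/3$ comes from. The restriction comes from the time integrability needed to place the Duhamel term in $L^2_t\dot H^{\frac{\alpha-1}{2}}$: the exponent $\frac{5(\alpha-1)}{2\alpha}<1$ in the Lebesgue case, together with its Besov endpoint.
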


If $p< 4/(\alpha-1)$, then we can relax the assumption on the interpolation index $q$. 

%%Besov版の主定理, non-end-point case%% 
\begin{thm}\label{0212-8}
  Let $1<\alpha \leq 5/3$, $T>0$, $1\leq p \leq \infty$ with $2/(\alpha-1)\leq p <4/(\alpha-1)$ and $1\leq q <\infty$. 
  Let $\theta^{(1)}, \theta^{(2)} \in L^{\infty}(0,T; B_{p,q}^{1-\alpha + \frac{2}{p}})$ be solutions of the integral equation of \eqref{SQG} in the sense of Definition~\ref{0215-1}
  with same initial data $\theta_0 \in B_{p,q}^{1-\alpha + \frac{2}{p}}$. 
  \begin{enumerate}
    \item If $\alpha=5/3$, 
    then $\theta^{(1)}(t) = \theta^{(2)}(t)$ in $B_{p,p}^{-\frac{2}{3} + \frac{2}{p}}$ for a.e. $t\in (0,T)$. 
    \item If $1<\alpha < 5/3$, 
    then $\theta^{(1)}(t) = \theta^{(2)}(t)$ in $B_{p,q}^{1-\alpha + \frac{2}{p}}$ for a.e. $t\in (0,T)$. 
  \end{enumerate}
\end{thm}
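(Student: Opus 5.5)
Theorem~\ref{0212-8} will be reduced to the endpoint case, Theorem~\ref{0212-1}, together with a modification of the same energy argument. The plan is to embed the space $B_{p,q}^{1-\alpha+\frac{2}{p}}$ into a Besov space with a larger Lebesgue index, where an $L^2$-type energy estimate for the difference $w=\theta^{(1)}-\theta^{(2)}$ can be closed. Since $p<4/(\alpha-1)$, the Besov embedding (Bahouri--Chemin--Danchin) gives
\[
B_{p,q}^{1-\alpha+\frac{2}{p}}\hookrightarrow B_{\frac{4}{\alpha-1},q}^{-\frac{1}{2}(\alpha-1)}.
\]
When $q\le 2$, Theorem~\ref{0212-1} then applies directly. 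For $2<q<\infty$ we lose $\ell^2$ summability, and the argument has to be redone.

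\textbf{Step 1 (energy identity).} Following the Lions--Masmoudi strategy, I test the difference equation for $w$ against a frequency-localized or heat-regularized version of $w$. The weak formulation of Proposition~\ref{0204-4} (the Besov analogue proved the same way) justifies this, using the smoothing of $e^{-t\Lambda^\alpha}$ and iteration of the integral equation. The goal is to obtain
\[
\|w(t)\|_{X}^2+\int_0^t\|\Lambda^{\frac{\alpha}{2}}w\|_X^2\,{\rm d}s\le C\int_0^t\big|\langle (\nabla^\perp\Lambda^{-1}w)\,\theta^{(2)}+(\nabla^\perp\Lambda^{-1}\theta^{(1)})\,w,\nabla w\rangle\big|\,{\rm d}s ,
\]
where $X$ is a space of negative regularity, for example $\dot H^{-\frac{1}{2}}$ or $H^{-\sigma}$. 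The membership $w\in L^\infty(0,T;X)\cap L^2(0,T;X^{\frac{\alpha}{2}})$ comes from splitting $w$ into its linear part and the Duhamel term. The Duhamel term is more regular because $p<4/(\alpha-1)$, which leaves room of order $\frac{2}{p}-\frac{\alpha-1}{2}>0$. The restriction $\alpha\le 5/3$ enters here: it is exactly the condition under which this regularized quantity lies in the energy class.

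\textbf{Step 2 (trilinear estimate).} I estimate the commutator-type trilinear term using the paraproduct decomposition and the symmetric form of the SQG product from~\cite{I_U_2024}. The target bound is
\[
\le \|\theta^{(j)}\|_{B_{p,q}^{1-\alpha+\frac{2}{p}}}\,\|w\|_{X^{\frac{\alpha}{2}}}^{2},
\]
up to a low-frequency remainder. The coefficient $\|\theta^{(j)}\|$ is not small. To handle this, I split $\theta^{(j)}=S_N\theta^{(j)}+(1-S_N)\theta^{(j)}$. The low-frequency part is smooth and gives a Gronwall term $C_N\|w\|_X^2$. The high-frequency part must be small in a norm that is uniform in time.

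\textbf{Main obstacle.} The hardest step is exactly this uniform-in-time smallness, since $L^\infty$ in time gives no uniform tail control. With $q<\infty$ one can still make progress. The Duhamel part of $\theta^{(j)}$ lies in a strictly more regular space with a positive power of time, so its tails are small. The linear part $e^{-t\Lambda^\alpha}\theta_0$ has high frequencies that are small uniformly in $t$, because $\theta_0$ is fixed and $q<\infty$. Using this decomposition, I expect the high-frequency coefficient to be at most $\varepsilon$, which can be absorbed into the dissipation. Gronwall's inequality then gives $w\equiv0$.

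At $\alpha=5/3$ the gain in Step 1 is borderline. There I expect to close the estimate only in the weaker norm $B_{p,p}^{-\frac{2}{3}+\frac{2}{p}}$, which explains conclusion (1). The final equality in $B_{p,q}$ for $\alpha<5/3$ follows because $w=0$ in $X$ forces $w=0$ as a distribution.
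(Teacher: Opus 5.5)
\textbf{There is a genuine gap: the uniform-in-time tail smallness of the Duhamel part cannot be obtained the way you propose.}

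Your reduction of the case $q\le 2$ to Theorem~\ref{0212-1} via $B^{1-\alpha+\frac{2}{p}}_{p,q}\hookrightarrow B^{-\frac12(\alpha-1)}_{\frac{4}{\alpha-1},2}$ is fine. You also correctly locate the real difficulty: the high-frequency part of the coefficient in the trilinear term must be small uniformly in $t$.

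You keep $\theta^{(1)},\theta^{(2)}$, which are only bounded in time, as coefficients. You then assert that the Duhamel part $D(t)=\int_0^t e^{-(t-s)\Lambda^\alpha}\nabla\cdot({\bf u}\theta)\,{\rm d}s$ lies in a strictly more regular space, so its tails are small. For a solution that is merely in $L^\infty_t$ of a critical space, this is not available.

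\begin{itemize}
\item Block by block, after weighting by the critical power of $2^j$, the heat factor only contributes $\int_0^t 2^{\alpha j}e^{-c(t-s)2^{\alpha j}}\,{\rm d}s=c^{-1}(1-e^{-ct2^{\alpha j}})$. This tends to a constant as $j\to\infty$, so the critical-norm tails of $D(t)$ are controlled only by the non-small size of $\theta$. No time-uniform gain above the critical level is compatible with the scaling $\theta_\lambda$.
\item What the estimates actually give (Lemma~\ref{0410-1}, Lemma~\ref{0211-3}) is $D\in L^\infty(0,T;B^{s}_{\frac p2,1})$ only for $s<1-\alpha+\frac4p$. That is strictly below the critical level.
\item The positive power of $T$ makes this lower-regularity norm small for short times. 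It gives no control of the tail of $D(t)$ in the critical norm, which is exactly what you need to put $\varepsilon$ in front of $\|w\|^2_{\dot H^{\frac\alpha2-\frac12}}$.
\item A Serrin-type substitute, $D\in L^r_tY$ with $r<\infty$ and $Y$ critical plus $\alpha/r$ derivatives, is not available either. Maximal regularity with an $L^\infty_t$ forcing yields only critical regularity in $L^r_t$.
\end{itemize}
Uniform tail smallness of $D$ is essentially the time continuity you are trying to do without.

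\textbf{The missing idea is the Lions--Masmoudi device the paper uses.}

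\begin{itemize}
\item Compare each $\theta^{(i)}$ with a reference solution $\bar\theta$ with the same data. This $\bar\theta$ is continuous in time with values in $B^{1-\alpha+\frac2p}_{p,q}$, and is supplied by the existence theory.
\item Write the nonlinearity for $w=\theta^{(1)}-\bar\theta$ as $((\nabla^\perp\Lambda^{-1}w)\cdot\nabla)\theta^{(1)}+((\nabla^\perp\Lambda^{-1}\bar\theta)\cdot\nabla)w$. Test with $\Lambda^{-1}w$, which is legitimate by Proposition~\ref{0211-4}.
\item The first term vanishes identically, because $\nabla^\perp\Lambda^{-1}w\cdot\nabla\Lambda^{-1}w=0$. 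Only $\bar\theta$ then enters, through $\int(\Lambda^{-1}\bar\theta)\,((\nabla^\perp\Lambda^{-1}w)\cdot\nabla)w\,{\rm d}x$; the discontinuous solution never does.
\item Compactness of $\bar\theta([0,T])$, together with $q<\infty$ and Lemma~\ref{0212-2}, gives $\sup_t\|\bar\theta_{>N}(t)\|_{B^{1-\alpha+\frac2p}_{p,q}}\le\varepsilon$.
\item The rest is duality against $B^{-2+\alpha-\frac2p}_{p',q'}$. The slack $p<\frac{4}{\alpha-1}$ is what permits the negative-regularity paraproduct estimate~\eqref{0212-9}, and hence arbitrary $q<\infty$.
\end{itemize}
Your splitting of $\theta^{(j)}$ into linear and Duhamel parts cannot replace this step.

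\textbf{A separate misreading.} The space $B_{p,p}$ in case (1) is not a weaker norm in which the conclusion is reached; $w=0$ as a distribution does not depend on the norm. It reflects the hypothesis needed at $\alpha=5/3$ in Proposition~\ref{0211-4} to place $w$ in the energy class.
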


%%Lebesgueの場合の拡大になっていること%%
\begin{rem}
  For $p\geq 2/(\alpha - 1)$, it is known that 
  \begin{equation*}
    L^{\frac{2}{\alpha - 1}}(\mathbb{R}^{2}) \hookrightarrow B_{\frac{2}{\alpha - 1}, \frac{2}{\alpha - 1}}^{0}(\mathbb{R}^{2}) \hookrightarrow B_{p, \frac{2}{\alpha - 1}}^{1 - \alpha + \frac{2}{p}}(\mathbb{R}^{2}). 
  \end{equation*}
  That is, Theorem~\ref{0212-8} is an extension of Theorem~\ref{0204-1}. 
\end{rem}

In the critical case, the uniqueness of solutions of \eqref{SQG} holds in the scale-critical Besov spaces with finite integrability index and positive regularity index. 
%%$\alpha = 1$の場合について%% 
\begin{thm}\label{0409-1}
  Let $\alpha = 1$, $T>0$, $2\leq p < \infty$ and $1\leq q <\infty$. 
  Let $\theta^{(1)}, \theta^{(2)} \in L^{\infty}(0,T; B_{p,q}^{\frac{2}{p}})$ be solutions of the integral equation of \eqref{SQG} in the sense of Definition~\ref{0215-1}
  with same initial data $\theta_0 \in B_{p,q}^{\frac{2}{p}}$. 
  Then $\theta^{(1)}(t) = \theta^{(2)}(t)$ in $B_{p,q}^{\frac{2}{p}}$ for a.e. $t\in (0,T)$. 
\end{thm}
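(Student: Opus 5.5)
Our plan for Theorem~\ref{0409-1} is to run the same energy argument as for Theorems~\ref{0204-1}--\ref{0212-8}. The one change for $\alpha=1$ is that we exploit the positive regularity $s=2/p>0$ together with $p<\infty$ and $q<\infty$. These give a high--low splitting of each solution: a smooth low-frequency part and a small high-frequency part, with smallness uniform in time.

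\textbf{Step 1 (smallness of high frequencies).} Set $w=\theta^{(1)}-\theta^{(2)}$. Since $B^{2/p}_{p,q}\hookrightarrow L^\infty\cap L^p$ is only borderline, we do not use $L^\infty$ directly. Instead, for each $N$ we split $\theta^{(i)}=S_N\theta^{(i)}+(I-S_N)\theta^{(i)}$. The low part is bounded in $L^\infty(0,T;W^{1,\infty}\cap L^p)$ with norm $\lesssim 2^{N}$. The high part should be small in $L^\infty(0,T;B^{2/p}_{p,q})$.

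Because $q<\infty$, the tail $\sum_{j>N}2^{2jq/p}\|\phi_j*\theta(t)\|_p^q$ tends to zero for each fixed $t$, but not obviously uniformly in $t\in(0,T)$. Uniformity requires some compactness of $\{\theta(t)\}$. We would get it from the integral equation: by the remark following Definition~\ref{0215-1}, $\theta$ is weakly continuous. Writing $\theta(t)=e^{-t\Lambda}\theta_0+\text{(Duhamel term)}$, the linear part is continuous in $B^{2/p}_{p,q}$ since $q<\infty$. For the Duhamel part we would use the smoothing $\|e^{-t\Lambda}\nabla f\|$ of order $t^{-1}$, combined with a bilinear estimate gaining a little regularity when both factors are in $L^\infty_tB^{2/p}_{p,q}$ with $s>0$. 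This should put the nonlinear part in a slightly better space on short intervals after localization in time. We would then iterate this over successive time intervals, which is the iteration scheme mentioned in the abstract. We expect this uniform smallness of high frequencies to be the main obstacle; if time continuity cannot be recovered directly, we would at least aim for equi-smallness of the tails on $[\delta,T]$.

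\textbf{Step 2 (justifying the energy equality).} We show $w\in L^\infty(0,T;L^2)\cap L^2(0,T;\dot H^{1/2})$ and justify
\[
\tfrac12\|w(t)\|_2^2+\int_0^t\|\Lambda^{1/2}w\|_2^2\,ds=-\int_0^t\!\int (w\nabla^\perp\Lambda^{-1}w)\cdot\nabla\theta^{(2)}\,dx\,ds .
\]
Using Proposition~\ref{0204-4}, we test the weak formulation against mollified versions of $w$ and pass to the limit. The needed $L^2$ integrability of $w$ near $t=0$ comes from the structure of the Duhamel formula for the difference: the linear parts cancel because the initial data coincide, and the fractional heat smoothing of the quadratic term bootstraps $w$ from $B^{2/p}_{p,q}$ down to $L^2$ in finitely many steps. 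Each step lowers integrability via Bernstein and Sobolev exponents, which is possible because $p<\infty$.

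\textbf{Step 3 (Gronwall).} In the trilinear term we split $\theta^{(2)}=S_N\theta^{(2)}+(I-S_N)\theta^{(2)}$.

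The low part is bounded by $C2^{N}\|w\|_2^2$, using that $\nabla^\perp\Lambda^{-1}$ is bounded on $L^2$.

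For the high part, we first rewrite the term using the commutator/divergence structure from \cite{I_U_2024}, so that only half a derivative falls on $w$. It is then bounded by
\[
C\|(I-S_N)\theta^{(2)}\|_{B^{2/p}_{p,q}}\|\Lambda^{1/2}w\|_2^2 .
\]
This uses the embedding $\dot H^{1/2}\hookrightarrow L^4$ and product estimates in which the positive regularity $2/p$ compensates the $L^p$ integrability.

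By Step~1 we choose $N$ so that this coefficient is at most $1/2$. The high part is then absorbed into the dissipation, and Gronwall's inequality with $w(0)=0$ yields $w\equiv0$ for a.e.\ $t$.
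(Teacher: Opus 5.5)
\textbf{Gap 1: Step~1 cannot be carried out as proposed.} For $\alpha=1$ you cannot get time-uniform smallness of the high-frequency tail of an arbitrary $L^\infty(0,T;B^{2/p}_{p,q})$ solution from the integral equation.

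The Duhamel map is exactly scale-invariant here. The product of two $B^{2/p}_{p,q}$ functions gains no regularity at fixed integrability: it lies at best in $B^{2/p-\epsilon}_{p,q}$, or in $B^{2/p}_{p/2,q}$, which is weaker at high frequencies. Hence $\nabla e^{-(t-s)\Lambda}$ produces a kernel like $(t-s)^{-1-\epsilon}$, which is non-integrable at $s=t$. This happens on $[\delta,T]$ just as near $t=0$, so neither localizing nor iterating in time helps.

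The integrable bounds that do hold trade high-frequency regularity for low-frequency integrability. An example is the last lemma of Section~\ref{0327-1}, which puts the Duhamel term in $L^\infty(0,T;B^{s}_{p/2,1})$ only for $s<2/p$. Such bounds give no control of $\sum_{j>N}$ in $B^{2/p}_{p,q}$. Time continuity in the critical space is exactly what the theorem avoids assuming.

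The missing idea is that you never need this for both solutions. In your own energy identity, only $\theta^{(2)}$ enters the trilinear term. The paper takes that solution to be a reference solution $\bar\theta\in C([0,T];B^{2/p}_{p,q})$ supplied by existence theory: local well-posedness of Chen--Miao--Zhang, made global by the modulus-of-continuity argument of Wang--Zhang. It then proves $\theta^{(1)}=\bar\theta$ and $\theta^{(2)}=\bar\theta$ separately. Compactness of $\{\bar\theta(t)\}_{t\in[0,T]}$ together with $q<\infty$ gives $\sup_t\|\sum_{|j|>N}\phi_j*\bar\theta(t)\|_{B^{2/p}_{p,q}}\to0$ for free.

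\textbf{Gap 2: the $L^2$ energy level does not cover $4<p<\infty$.} For $p>4$ one has $2/p<1/2$. Two things your argument needs are then out of reach: the class $w\in L^2(0,T;\dot H^{1/2})$, and the bound $|\int w\,(\nabla^\perp\Lambda^{-1}w)\cdot\nabla\theta_{\mathrm{high}}\,dx|\le C\|\theta_{\mathrm{high}}\|_{B^{2/p}_{p,q}}\|w\|_{H^{1/2}}^2$.

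Consider $w$ and $\theta$ at the same frequency $2^j$ and $\nabla^\perp\Lambda^{-1}w$ at low frequency. Every H\"older/Bernstein allocation leaves a factor of at least $c_j2^{j(1/2-2/p)}$, with $(c_j)\in\ell^q$ coming from the $B^{2/p}_{p,q}$ norm. The null structure of Lemma~\ref{0205-9} only helps for high--high-to-low interactions, not this one. The same low--high paraproduct caps at $2/p$ the $L^2$-based regularity your Duhamel bootstrap can give $w$.

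The paper avoids this by working at the $\dot H^{-1/2}$ level, testing with $\Lambda^{-1}w$:
\begin{itemize}
\item For $\alpha=1$ the dissipation is $\|w\|_{L^2}^2$.
\item The required class is $L^\infty(0,T;\dot H^{-1/2})\cap L^2(0,T;L^2)$ (Proposition~\ref{0407-1}).
\item The term containing $\theta^{(1)}$ vanishes identically.
\item After splitting $\bar\theta$ into high and low frequencies, $\int\Lambda^{-1}\bar\theta\,\big((\nabla^\perp\Lambda^{-1}w)\cdot\nabla\big)w\,dx$ is bounded by $C\varepsilon\|w\|_{L^2}^2+C\|w\|_{L^2}\|w\|_{H^{-1/2}}$.
\end{itemize}
This uses only regularity $0<2/p$, so it works for every $p<\infty$, and Gronwall closes exactly as in Theorem~\ref{0204-1}.
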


%%手法の説明%%
We now present an outline of the proof of our main results. 
Our proof is motivated by the method employed in Lions and Masmoudi~\cite{Li_Ma_2001} to establish the uniqueness of the Navier-Stokes equations in $L^{\infty}(0,T; L^{d}(\mathbb{R}^{d}))$ ($d \geq 4$). 
In~\cite{Li_Ma_2001}, their approach relies on the energy equality that holds for the Navier-Stokes equations, 
\begin{equation*}
  \partial_t{\bf u} - \Delta{\bf u} + ({\bf u}\cdot \nabla){\bf u} + \nabla p = 0, \quad \nabla\cdot {\bf u}=0. 
\end{equation*}
If ${\bf u}, {\bf v}$ are solutions of the Navier-Stokes equations, then ${\bf u} - {\bf v}$ satisfies 
\begin{equation*}
  \partial_t({\bf u}-{\bf v}) - \Delta({\bf u}-{\bf v}) + ({\bf u}\cdot \nabla)({\bf u}-{\bf v}) + \big(({\bf u}-{\bf v})\cdot \nabla\big){\bf v} + \nabla p = 0. 
\end{equation*}
Thus, by the energy method, 
\begin{equation*}
  \frac{1}{2}\frac{\rm d}{{\rm d}t}{\|({\bf u}- {\bf v})(t)\|}_{L^2}^2 
  + {\|\nabla({\bf u}-{\bf v})(t)\|}_{L^2}^2 
  \leq -\int_{\mathbb{R}^{d}}({\bf u}- {\bf v})\big(({\bf u}-{\bf v})\cdot \nabla\big){\bf v}~{\rm d}x
\end{equation*}
holds, which implies ${\bf u}$ does not appear in the estimate. 
Hence, Lions and Masmoudi~\cite{Li_Ma_2001} assumed that ${\bf u}\in L^{\infty}(0,T; L^{d}(\mathbb{R}^{d}))$ and ${\bf v}\in C([0,T]; L^{d}(\mathbb{R}^{d}))$, and that ${\bf u}$, {\bf v} have same initial data. 
And they proved the uniqueness by using a decomposition for solutions that are continuous in time. 

The crucial point in the above argument is the justification of the energy inequality, 
that is, it is necessary to verify that 
\begin{equation}\label{0215-2}
  {\bf u} - {\bf v}\in L^{\infty}(0,T; L^{2})\cap L^2(0,T; \dot{H}^{1}) 
\end{equation}
holds. 
Indeed, \cite{Li_Ma_2001} established that~\eqref{0215-2} holds for ${\bf u}, {\bf v}\in L^{\infty}(0,T; L^{d}(\mathbb{R}^{d}))$ with $d\geq 4$ 
by using the smoothing effect of the Stokes operator combined with an iteration argument. 

We briefly explain why dimension constraints $d \geq 4$ arise. 
From the well-known Sobolev embedding, the following holds, 
\begin{equation*}
  H^{1}(\mathbb{R}^{d})\hookrightarrow W^{2-\frac{d}{2}, n}(\mathbb{R}^{d}), 
\end{equation*}
Thus, ${\bf u} - {\bf v}\in L^{2}(0,T; W^{2-\frac{d}{2}, d}(\mathbb{R}^{d}))$ is a necessary condition for \eqref{0215-2}. 
Note that if $d\geq 4$, then $2-d/2\leq 0$. 
On the other hand, we consider the following Duhamel term for the Navier-Stokes equations 
\begin{equation}\label{0215-3}
  \int_{0}^{t}e^{(t-s)\Delta}\mathbb{P}\big(\nabla\cdot ({\bf u}\otimes{\bf u})\big)~{\rm d}s, 
\end{equation}
where $\mathbb{P}$ is the Helmholtz projection. 
Since the smoothting effect of the Stokes operator, we can show that if $d\geq4$, then for any ${\bf u}\in L^{\infty}(0,T; L^{d}(\mathbb{R}^{d}))$, 
we have 
\begin{equation*}
  \int_{0}^{t}e^{(t-s)\Delta}\mathbb{P}\big(\nabla\cdot ({\bf u}\otimes{\bf u})\big)~{\rm d}s 
  \in L^{2}(0,T; W^{2-\frac{d}{2}, d}(\mathbb{R}^{d})), 
\end{equation*}
which implies ${\bf u} - {\bf v}\in L^{2}(0,T; W^{2-\frac{d}{2}, d}(\mathbb{R}^{d}))$. 
In contrast, when $d = 2,3$, the Duhamel term \eqref{0215-3} belonging to $L^{2}(0,T; W^{2-\frac{d}{2}, d}(\mathbb{R}^{d}))$ is not generally guaranteed. 
From the above, it seems that the following condition 
\begin{equation}\label{0215-5}
  H^{1}(\mathbb{R}^{d})\hookrightarrow W^{2-\frac{d}{2}, d}(\mathbb{R}^{d}), \ 2-\frac{d}{2}\leq 0. 
\end{equation}
can be regarded as one of the criteria for determining whether the uniqueness can be established using the present method. 
In fact, if we consider the fractional Navier-Stokes equations 
\begin{equation*}
  \partial_t{\bf u} + \Lambda^{\alpha}{\bf u} + ({\bf u}\cdot \nabla){\bf u} + \nabla p = 0, \quad \nabla\cdot {\bf u}=0, 
\end{equation*}
then the scale-critical Lebesgue space is $L^{\frac{d}{\alpha-1}}(\mathbb{R}^{d})$, 
and the energy inequality 
\begin{equation*}
  \frac{1}{2}\frac{{\rm d}}{{\rm d}t}{\|{\bf u}(t)\|}_{L^2}^2 + {\|\Lambda^{\frac{\alpha}{2}}{\bf u}(t)\|}_{L^2}^2 \leq 0 
\end{equation*}
is justified under the condition 
\begin{equation*}
  {\bf u} \in L^{\infty}(0,T; L^{2})\cap L^{2}(0,T; \dot{H}^{\frac{\alpha}{2}}). 
\end{equation*}
In this case, the condition corresponding to~\eqref{0215-5} is 
\begin{equation*}
  H^{\frac{\alpha}{2}}(\mathbb{R}^{d})\hookrightarrow W^{\frac{3}{2}\alpha - 1 - \frac{d}{2}, \frac{d}{\alpha-1}}(\mathbb{R}^{d}), \ \frac{3}{2}\alpha - 1-\frac{d}{2}\leq 0. 
\end{equation*}
Then, under the this condition, the uniqueness of solutions of fractional Navier-Stokes equations in $L^{\infty}(0,T; L^{\frac{d}{\alpha-1}}(\mathbb{R}^{d}))$ 
can be proved using a method similar to that of~\cite{Li_Ma_2001}. 
Regarding the range of $\alpha$, for instance, 
it is $\alpha\leq 4/3$ when $d = 2$, 
and $\alpha\leq 5/3$ when $d = 3$. 

In light of this, we discuss the energy equality in the surface quasi-geostrophic equation 
and the exponent $\alpha = 5/3$ in our theorem. 
Since ${\bf u} = \nabla^{\perp}\Lambda^{-1}\theta$ satisfies the imcompressibility condition, 
the energy inequality also holds for \eqref{SQG}, 
\begin{equation}\label{0215-4}
  \frac{1}{2}\frac{{\rm d}}{{\rm d}t}{\|\theta(t)\|}_{L^2}^2 + {\|\Lambda^{\frac{\alpha}{2}}\theta(t)\|}_{L^2}^2 \leq 0. 
\end{equation}
In this case, the condition necessary to justify energy equality~\eqref{0215-4} is 
\begin{equation*}
  \theta \in L^{\infty}(0,T; L^{2})\cap L^{2}(0,T; \dot{H}^{\frac{\alpha}{2}}), 
\end{equation*}
this is the same as in the case of two-dimensional fractional Navier-Stokes equations. 
On the other hand, since $\nabla^{\perp}\cdot \nabla f=0$, we can express the nonlinear term in \eqref{SQG} as follows, 
\begin{equation*}
  ({\bf u}\cdot \nabla)\theta 
  = \left((\nabla^{\perp}\Lambda^{-1}\theta)\cdot \nabla\right)\theta 
  = \nabla^{\perp}\cdot \left((\Lambda^{-1}\theta)\nabla \theta\right). 
\end{equation*}
From this, the following equality hold (see, e.g.,~\cite{Ma_2008}), 
\begin{equation}\label{0215-6}
  \frac{1}{2}\frac{{\rm d}}{{\rm d}t}{\|\Lambda^{-\frac{1}{2}}\theta(t)\|}_{L^2}^2 + {\|\Lambda^{\frac{\alpha}{2}-\frac{1}{2}}\theta(t)\|}_{L^2}^2 \leq 0. 
\end{equation}
Then the condition required to justify~\eqref{0215-6} is 
\begin{equation*}
  \theta \in L^{\infty}(0,T; \dot{H}^{-\frac{1}{2}})\cap L^{2}(0,T; \dot{H}^{\frac{\alpha}{2}-\frac{1}{2}}), 
\end{equation*}
and the condition corresponding to~\eqref{0215-5} is 
\begin{equation*}
  H^{\frac{\alpha}{2}-\frac{1}{2}}(\mathbb{R}^{2})\hookrightarrow W^{\frac{3}{2}\alpha - \frac{5}{2}, \frac{2}{\alpha-1}}(\mathbb{R}^{2}), \ \frac{3}{2}\alpha - \frac{5}{2}\leq 0
\end{equation*}
Clearly, 
\begin{equation*}
  \frac{3}{2}\alpha - \frac{5}{2}\leq 0 
  \text{ if and only if } 
  \alpha\leq 5/3, 
\end{equation*}
and the range of $\alpha$ is wider than that of two-dimensional fractional Navier-Stokes equations. 

The paper is organized as follows. 
In Section \ref{0327-1}, we recall several lemmas 
and justify the energy inequality \eqref{0215-6} for the solution defined in this paper. 
In Section \ref{0327-2}, we prove the main theorems using the energy method 
together with bilinear estimates exploiting the structure of the nonlinear term in \eqref{SQG}. 
Finally, in Appendix \ref{0213-2}, we show the equivalence of the two solutions defined in this paper by using the Littlewood-Paley decomposition. 

\begin{nota}
  Throughout this paper, $C>0$ and $c>0$ denote generic positive constants which may change from line to line. 
  In particular, when the constant $C$ depends on the time variable $T$, we write it as $C_{T}$. 
\end{nota}

\section{Preliminarys}\label{0327-1} 
In this section, we introduce several lemmas which are elemental properties of the fractional heat semigroup, homogeneous Sobolev spaces and homogeneous Besov spaces. 
Using these lemmas, we show that the difference of two solutions of \eqref{SQG} belongs to $L^{\infty}(0,T; \dot{H}^{-\frac{1}{2}})\cap L^{2}(0,T; \dot{H}^{\frac{\alpha}{2}-\frac{1}{2}})$. 

\subsection{Basic lemmas}
%%Fractional semigroupに対するL^p-L^q評価%% 
The following lemma gives an $L^{p} - L^{q}$ estimate for the fractional heat semigroup. 
\begin{lem}{\cite{Wu_2001}}\label{0206-9}
  Let $0 < \alpha \leq 2$, $t>0$ and $1\leq p\leq q \leq \infty$. 
  Then there exists a constant $C>0$ such that for any $f\in L^p(\mathbb{R}^2)$, we have 
  \begin{equation*}
    {\|\nabla e^{-t\Lambda^{\alpha}}f\|}_{L^q} 
    \leq C t^{-\left(\frac{1}{\alpha}+\frac{2}{\alpha}\left(\frac{1}{p} - \frac{1}{q}\right)\right)}{\|f\|}_{L^p}. 
  \end{equation*}
\end{lem}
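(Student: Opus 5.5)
The plan is to write $\nabla e^{-t\Lambda^{\alpha}}f = K_t * f$, where $K_t := \nabla G_t$ and $G_t := \mathcal{F}^{-1}[e^{-t|\xi|^{\alpha}}]$. Young's convolution inequality, with exponent $r$ defined by $1 + \frac1q = \frac1r + \frac1p$, gives
\begin{equation*}
  {\|\nabla e^{-t\Lambda^{\alpha}}f\|}_{L^q} \leq {\|K_t\|}_{L^r}{\|f\|}_{L^p}.
\end{equation*}
The hypothesis $1\le p\le q\le\infty$ guarantees $1\le r\le\infty$. So the whole statement reduces to computing ${\|K_t\|}_{L^r}$.

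For this, use scaling. Since $G_t(x) = t^{-\frac{2}{\alpha}}G_1(t^{-\frac1\alpha}x)$ in $\mathbb{R}^2$, we get $K_t(x) = t^{-\frac{3}{\alpha}}(\nabla G_1)(t^{-\frac1\alpha}x)$. Hence
\begin{equation*}
  {\|K_t\|}_{L^r} = t^{-\frac3\alpha + \frac{2}{\alpha r}}{\|\nabla G_1\|}_{L^r} = t^{-\frac1\alpha - \frac{2}{\alpha}\left(1-\frac1r\right)}{\|\nabla G_1\|}_{L^r}.
\end{equation*}
Substituting $1-\frac1r = \frac1p-\frac1q$ gives exactly the exponent in the statement, with $C = {\|\nabla G_1\|}_{L^r}$.

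It remains to check that $\nabla G_1 \in L^1\cap L^\infty$, since interpolation then gives every $L^r$ with $1\le r\le\infty$.
\begin{itemize}
  \item \emph{Boundedness.} $\nabla G_1$ is bounded because $|\xi| e^{-|\xi|^{\alpha}}\in L^1(\mathbb{R}^2)$.
  \item \emph{Case $\alpha=2$.} $G_1$ is a Gaussian, so integrability is immediate.
  \item \emph{Case $0<\alpha<2$.} This is the main obstacle. The symbol $e^{-|\xi|^\alpha}$ is not smooth at the origin, so $G_1$ is not Schwartz. The natural target is the decay bound $|\nabla G_1(x)|\le C(1+|x|)^{-3-\alpha}$, which is integrable in $\mathbb{R}^2$.
\end{itemize}

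To prove this decay bound, I would split $e^{-|\xi|^\alpha} = \chi(\xi)e^{-|\xi|^\alpha} + (1-\chi(\xi))e^{-|\xi|^\alpha}$, where $\chi$ is a smooth cutoff near the origin. The high-frequency part is Schwartz, so it contributes rapid decay. For the low-frequency part, multiply by $\xi_j$ and integrate by parts in $\xi$ repeatedly, writing $x^\beta\,\partial_j G_1$ as the inverse Fourier transform of $\partial_\xi^\beta\big(\xi_j\chi e^{-|\xi|^\alpha}\big)$. Each derivative of $|\xi|^{\alpha}$ lowers the homogeneity by one, so after three derivatives the singularity is of order $|\xi|^{\alpha-2}$, which is still locally integrable in $\mathbb{R}^2$. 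This yields $|x|^{3}|\nabla G_1(x)|\lesssim 1$, and a dyadic (Littlewood--Paley) refinement of the same argument gives the extra $|x|^{-\alpha}$.

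Alternatively, one can avoid pointwise kernel bounds altogether. Subordination writes $e^{-t\Lambda^\alpha}$ as an average of heat semigroups against a positive probability measure in the time variable. For $\alpha<2$ this measure has a finite moment $\int s^{-1/2}\,d\mu(s)<\infty$, which bounds ${\|\nabla G_1\|}_{L^1}$ by Gaussian estimates. Either route gives $\nabla G_1\in L^1$ and completes the proof. This is the standard result cited from~\cite{Wu_2001}.
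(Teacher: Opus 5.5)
Your proof is correct. The paper gives no proof of its own and simply cites Wu (2001); your argument is the standard route to this estimate: write $\nabla e^{-t\Lambda^{\alpha}}f$ as convolution with $t^{-3/\alpha}(\nabla G_1)(t^{-1/\alpha}\cdot)$, check $\nabla G_1\in L^1\cap L^\infty$, and apply Young's inequality.

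One small simplification: the dyadic refinement giving the extra factor $|x|^{-\alpha}$ is unnecessary. The bound $|\nabla G_1(x)|\lesssim \min\{1,|x|^{-3}\}$, which you already get from three integrations by parts, is integrable in $\mathbb{R}^2$.
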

\newpage 
%%Homogeneous Sobolevの定義%%
We define the homogeneous Sobolev spaces via Fourier multipliers (see~\cite{Tr_1983}). 
\begin{dfn}
  For $s\in \mathbb{R}$ and $1\leq p < \infty$, 
  we define the homogeneous Sobolev spaces as follows. 
  \begin{equation*}
    \dot{W}^{s,p} = \dot{W}^{s,p}(\mathbb{R}^{d}) := \{f\in \mathcal{S}'/ \mathcal{P} |\ {\|f\|}_{\dot{W}^{s,p}} < \infty\}, 
  \end{equation*}
  where $\mathcal{P}$ is the set of all polynomials of $d$ real variables and 
  \begin{equation*}
    {\|f\|}_{\dot{W}^{s,p}} := {\|\Lambda^{s}f\|}_{L^p}. 
  \end{equation*}
  When $p=2$, we denote $H^{s} := W^{2,s}$. 
\end{dfn}

%%$\dot{H}^{s}$の周波数分解を用いた同値ノルム%% 
We often use the equivalent norm of $\dot{H}^{s}$, which is given by the Littlewood-Paley decomposition. 
\begin{equation}\label{0406-1}
  c{\|f\|}_{\dot{H}^{s}} 
  \leq {\left\|\left\{2^{sj}{\left\|\phi_{j}*f\right\|}_{L^{2}}\right\}_{j\in\mathbb{Z}}\right\|}_{l^{2}} 
  \leq C{\|f\|}_{\dot{H}^{s}}. 
\end{equation}

The next lemma is the bilinear estimate in the homogeneous Sobolev spaces. 
%%斉次Sobolev空間における積の評価%% 
\begin{lem}[e.g., \cite{Ch_We_1991}]\label{0205-3}
  Let $0< s <1$ and $1<p, p_1, p_2, q_1, q_2<\infty$ with $1/p = 1/p_1 + 1/p_2 = 1/q_1 + 1/q_2$. 
  Then there exists a constant $C>0$ such that for any $f \in L^{p_1}\cap \dot{W}^{s, q_1}$ and $g \in\dot{W}^{s, p_2} \cap L^{q_2}$, we have 
  \begin{equation*}
    {\|fg\|}_{\dot{W}^{s, p}} 
    \leq C({\|f\|}_{L^{p_1}}{\|g\|}_{\dot{W}^{s, p_2}} + {\|f\|}_{\dot{W}^{s, q_1}}{\|g\|}_{L^{q_2}}). 
  \end{equation*}
\end{lem}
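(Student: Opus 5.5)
The statement immediately above is Lemma~\ref{0205-3}, the fractional Leibniz (Kato--Ponce type) estimate in homogeneous Sobolev spaces, which the paper quotes from \cite{Ch_We_1991}. I would prove it with Bony's paraproduct decomposition and the homogeneous Littlewood--Paley blocks $\{\phi_j\}_{j\in\mathbb{Z}}$. Write $S_j f=\sum_{k\le j-3}\phi_k*f$ and
\begin{equation*}
  fg = T_f g + T_g f + R(f,g), \qquad T_f g=\sum_j S_j f\,(\phi_j*g), \qquad R(f,g)=\sum_{|j-k|\le 2}(\phi_j*f)(\phi_k*g).
\end{equation*}
For $1<p<\infty$ and $s\in\mathbb{R}$, the norm $\|\Lambda^s F\|_{L^p}$ is equivalent to the Triebel--Lizorkin norm $\big\|\big(\sum_j 2^{2sj}|\phi_j*F|^2\big)^{1/2}\big\|_{L^p}$. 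This is the $L^p$ analogue of \eqref{0406-1}, and it follows from the Littlewood--Paley square function theorem together with Mikhlin multipliers. Each of the three pieces is estimated in this square-function norm.

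For the paraproduct $T_f g$, the block $S_jf\,(\phi_j*g)$ has Fourier support in an annulus of size $2^j$. Hence
\begin{equation*}
  \|T_fg\|_{\dot W^{s,p}}\le C\Big\|\Big(\sum_j 2^{2sj}|S_jf|^2|\phi_j*g|^2\Big)^{1/2}\Big\|_{L^p}.
\end{equation*}
Using $|S_jf|\le C\,Mf$, with $M$ the Hardy--Littlewood maximal function, Hölder's inequality with $1/p=1/p_1+1/p_2$ and the maximal theorem (valid since $p_1>1$) give $C\|f\|_{L^{p_1}}\|g\|_{\dot W^{s,p_2}}$. Symmetrically, $\|T_gf\|_{\dot W^{s,p}}\le C\|f\|_{\dot W^{s,q_1}}\|g\|_{L^{q_2}}$.

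The remainder $R(f,g)$ needs $s>0$, because its blocks have Fourier support only in balls of radius about $2^j$. Then $\phi_\ell * R$ only sees terms with $j\ge \ell-C$. I would bound
\begin{equation*}
  2^{s\ell}|\phi_\ell*R|\le \sum_{j\ge \ell-C}2^{s(\ell-j)}\,M\big(2^{sj}|\phi_j*f|\,|\tilde\phi_j*g|\big),
\end{equation*}
where $\tilde\phi_j=\sum_{|k-j|\le2}\phi_k$. Summing the geometric factor $2^{s(\ell-j)}$, which converges since $s>0$, and applying the Fefferman--Stein vector-valued maximal inequality for $\ell^2$-valued functions with $1<p<\infty$ reduces the bound to $\big\|\big(\sum_j 2^{2sj}|\phi_j*f|^2|\tilde\phi_j*g|^2\big)^{1/2}\big\|_{L^p}$. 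Bounding $|\tilde\phi_j*g|\le C\,Mg$ and using Hölder with $(q_1,q_2)$ gives $C\|f\|_{\dot W^{s,q_1}}\|g\|_{L^{q_2}}$.

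The main obstacle is this remainder term. It is where the hypotheses $s>0$ and $1<p<\infty$ are used: the vector-valued maximal inequality is needed because the blocks $\phi_\ell*R$ are not frequency-localized to annuli. The restriction $s<1$ is not needed by the argument above and can be kept just to match the cited statement. A minor additional point is to justify the decomposition for $f,g$ in the stated intersections, modulo polynomials. I would do this by first proving the estimate for Schwartz functions whose Fourier transforms vanish near $0$, and then extending by density and Fatou's lemma.
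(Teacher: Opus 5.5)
The paper gives no proof of this lemma. It is imported as a black box from Christ--Weinstein, so the only thing to compare your argument with is that citation.

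Your proof is the standard Bony paraproduct proof of the homogeneous Kato--Ponce inequality, and it is correct. The two paraproducts are handled by three ingredients: the annular Fourier support of $S_jf\,(\phi_j*g)$, the pointwise bound $|S_jf|\le CMf$, and H\"older's inequality. In the remainder, the constraint $j\ge \ell-C$ produces the factor $2^{s(\ell-j)}$, which is summable exactly because $s>0$. The Fefferman--Stein inequality and $|\tilde\phi_j*g|\le CMg$ then close the estimate.

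What your route buys over the citation is a self-contained argument, modulo two standard facts: the identification of $\|\Lambda^s F\|_{L^p}$ with the $\dot F^{s}_{p,2}$ square-function norm, and the Fefferman--Stein inequality. It is also valid for every $s>0$, so the upper restriction $s<1$ in the quoted statement is superfluous. This is immaterial for the paper, which uses the lemma only with $s=\frac{\alpha}{2}-\frac12\in(0,\frac13]$, $p=\frac{4}{5-\alpha}$, $p_1=q_2=\frac{4}{3-\alpha}$ and $p_2=q_1=2$.

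On your final approximation step, the density you invoke does hold. The truncation $u_N=\sum_{|j|\le N}\phi_j*f$ converges to $f$ in both $L^{p_1}$ and $\dot W^{s,q_1}$. It already lies in $L^{q_1}$, being the image of $\Lambda^s u_N\in L^{q_1}$ under $\Lambda^{-s}\sum_{|j|\le N+1}\phi_j*$, which is convolution with a Schwartz kernel. A spatial cutoff of $u_N$ followed by that wider projection then gives Schwartz approximants whose Fourier transforms lie in $C_c^\infty(\mathbb{R}^2\setminus\{0\})$.

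You should also state in which sense $fg$ is a tempered distribution, so that the $\dot W^{s,p}$-limit of the approximating products (and the Fatou argument) can be identified with it. This is immediate when $\frac{1}{p_1}+\frac{1}{q_2}\le 1$. That is the case in the paper's application, where $fg\in L^{\frac{2}{3-\alpha}}$.
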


The following lemmas provide basic estimates for frequency-localized functions and fundamental properties of Besov spaces. 
%%周波数分解した関数の微分評価%% 
\begin{lem}[\cite{Ba_Ch_Da_2011}]\label{0205-7}
  Let $1\leq p \leq \infty$. 
  Then there exists a constant $C>0$ such that for any $j\in \mathbb{Z}$ and $f$ with $\phi_j*f \in L^p$, we heve 
  \begin{equation}\label{0409-3}
    {\|\nabla(\phi_j*f)\|}_{L^p}\leq C 2^{j}{\|\phi_{j}*f\|}_{L^p} 
    \text{ and } 
    {\|\Lambda^{-1}(\phi_j*f)\|}\leq C 2^{-j}{\|\phi_{j}*f\|}_{L^p}. 
  \end{equation}
  Also, there exists a constant $C>0$ such that for any $f$ with $\psi*f$, we have 
  \begin{equation}\label{0406-2}
    {\|\nabla(\psi*f)\|}_{L^p}\leq C {\|\psi*f\|}_{L^p}, 
  \end{equation}
\end{lem}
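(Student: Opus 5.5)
The estimates in Lemma~\ref{0205-7} are Bernstein-type inequalities, and I will prove them by writing each operator as a convolution with an $L^1$ kernel whose norm scales correctly in $j$. The key device is a fattened cutoff. Since $\widehat{\phi_j}(\xi)=\widehat{\phi_0}(2^{-j}\xi)$ is supported in the annulus $\mathcal{C}_j=\{3/8\cdot 2^{j-1}\le|\xi|\le 4/3\cdot 2^{j-1}\}$, I fix $\chi\in C_c^\infty(\mathbb{R}^d\setminus\{0\})$ with $\chi\equiv1$ on $\mathcal{C}_0$ and set $\chi_j(\xi)=\chi(2^{-j}\xi)$. Then
\begin{equation*}
\nabla(\phi_j*f)=\mathcal{F}^{-1}[i\xi\chi_j(\xi)]*(\phi_j*f),\qquad
\Lambda^{-1}(\phi_j*f)=\mathcal{F}^{-1}[|\xi|^{-1}\chi_j(\xi)]*(\phi_j*f).
\end{equation*}
Both identities hold at the level of $\mathcal{S}'$, because $\phi_j*f$ has Fourier support in $\mathcal{C}_j$. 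The multipliers $i\xi\chi(\xi)$ and $|\xi|^{-1}\chi(\xi)$ lie in $C_c^\infty$, since $\chi$ vanishes near the origin, so their inverse Fourier transforms $K^{(1)}$ and $K^{(-1)}$ are Schwartz functions and in particular belong to $L^1$.

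For the scaling, a change of variables gives
\begin{equation*}
\mathcal{F}^{-1}[i\xi\chi(2^{-j}\xi)](x)=2^{j}\,2^{jd}K^{(1)}(2^jx),\qquad
\mathcal{F}^{-1}[|\xi|^{-1}\chi(2^{-j}\xi)](x)=2^{-j}\,2^{jd}K^{(-1)}(2^jx).
\end{equation*}
Their $L^1$ norms are therefore $2^{j}\|K^{(1)}\|_{L^1}$ and $2^{-j}\|K^{(-1)}\|_{L^1}$. Young's inequality $\|k*g\|_{L^p}\le\|k\|_{L^1}\|g\|_{L^p}$ holds for all $1\le p\le\infty$, and applying it yields both parts of \eqref{0409-3}, with constants independent of $j$ and $p$.

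For \eqref{0406-2}, the Fourier support of $\psi*f$ lies in the ball $\{|\xi|\le4/3\}$. I take $\tilde\chi\in C_c^\infty$ with $\tilde\chi\equiv1$ on that ball and write $\nabla(\psi*f)=\mathcal{F}^{-1}[i\xi\tilde\chi]*(\psi*f)$. The kernel is again Schwartz, hence in $L^1$, and Young's inequality concludes. Only the derivative is claimed for $\psi$, which is essential: $|\xi|^{-1}$ would be singular at the origin.

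The only point needing care is that $f$ is merely a tempered distribution (modulo polynomials in the homogeneous setting), so the reproducing identity $\phi_j*f=\mathcal{F}^{-1}[\chi_j]*(\phi_j*f)$ must be justified in $\mathcal{S}'$. This holds because $\chi_j\widehat{\phi_j}=\widehat{\phi_j}$. Moreover $\mathcal{F}^{-1}[|\xi|^{-1}\chi_j]$ is Schwartz, so $\Lambda^{-1}(\phi_j*f)$ is unambiguously defined and carries no polynomial ambiguity. I expect this justification to be the main obstacle, though a mild one.
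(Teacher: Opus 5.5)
Your proof is correct and is the standard argument behind the citation. The paper gives no proof of its own and defers to Bahouri--Chemin--Danchin, where these Bernstein inequalities (including the bound for the degree $-1$ multiplier $|\xi|^{-1}$ on an annulus) are proved exactly as you do: insert a smooth cutoff equal to $1$ on the spectral support, rescale the kernel, and apply Young's inequality. One cosmetic point: the paper's definition does not assert $\widehat{\phi_j}=\widehat{\phi_0}(2^{-j}\cdot)$, but you only use the support condition $\mathrm{supp}\,\widehat{\phi_j}\subset\mathcal{C}_j$, which the definition does provide.
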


%%周波数分解した関数のsemigrop評価%%
\begin{lem}[\cite{Wu_Yu_2008}]\label{0206-2}
  Let $0<\alpha\leq 2$, $t>0$, $j\in\mathbb{Z}$ and $1\leq p\leq \infty$. 
  Then there exist constants $C,c>0$ such that for any $f$ with $\phi_j*f\in L^p$, we have 
  \begin{equation*}
    {\|e^{-t\Lambda^\alpha}(\phi_j*f)\|}_{L^p}\leq Ce^{-ct2^{\alpha j}}{\|\phi_j*f\|}_{L^p}. 
  \end{equation*}
\end{lem}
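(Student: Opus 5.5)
The estimate of Lemma~\ref{0206-2} will be proved by explicit kernel analysis: factor the multiplier with a smooth cutoff adapted to the annulus of $\phi_j$ and show that the resulting kernel has $L^1$ norm $\le Ce^{-ct2^{\alpha j}}$. Young's inequality then gives the bound uniformly in $1\le p\le\infty$.

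\textbf{Step 1 (scaling to $j=0$).}
Let $\widetilde\varphi\in C_c^\infty$ equal $1$ on the annulus $\{3/16\le|\xi|\le 2/3\}$ and be supported in a slightly larger annulus $A=\{1/8\le|\xi|\le 1\}$. Then
\begin{equation*}
\widehat{\phi_j*f}=\widetilde\varphi(2^{-j}\xi)\,\widehat{\phi_j*f},
\end{equation*}
so
\begin{equation*}
e^{-t\Lambda^\alpha}(\phi_j*f)=K_{j,t}*(\phi_j*f),
\qquad
\widehat{K_{j,t}}(\xi)=e^{-t|\xi|^\alpha}\widetilde\varphi(2^{-j}\xi).
\end{equation*}
By the change of variables $\xi=2^j\eta$, $x\mapsto 2^{-j}x$, the $L^1$ norm is dilation invariant:
\begin{equation*}
\|K_{j,t}\|_{L^1}=\|G_\tau\|_{L^1},\qquad
\widehat{G_\tau}(\eta)=e^{-\tau|\eta|^\alpha}\widetilde\varphi(\eta),\quad \tau=t2^{\alpha j}.
\end{equation*}
It therefore suffices to prove $\|G_\tau\|_{L^1}\le Ce^{-c\tau}$ for all $\tau>0$, with $C,c$ independent of $\tau$.

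\textbf{Step 2 (the $L^1$ bound).}
Use the standard inequality
\begin{equation*}
\|G\|_{L^1}\le C\|\widehat G\|_{L^2}+C\sum_{|\beta|=2}\|\partial^\beta\widehat G\|_{L^2}
\end{equation*}
in dimension two. It follows from $\|G\|_{L^1}\le \|(1+|x|^2)G\|_{L^2}\,\|(1+|x|^2)^{-1}\|_{L^2}$ and Plancherel. On $A$ the function $|\eta|^\alpha$ is smooth, since $A$ stays away from the origin. By Leibniz and the chain rule, each derivative $\partial^\beta\big(e^{-\tau|\eta|^\alpha}\big)$ with $|\beta|\le 2$ is bounded there by $C(1+\tau)^{|\beta|}e^{-\tau 8^{-\alpha}}$, because $|\eta|\ge 1/8$. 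The derivatives of $\widetilde\varphi$ are fixed bounded functions supported in $A$. Hence
\begin{equation*}
\|G_\tau\|_{L^1}\le C(1+\tau)^2e^{-\tau 8^{-\alpha}}\le C'e^{-c\tau},
\qquad c=\tfrac12 8^{-\alpha}.
\end{equation*}

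\textbf{Step 3 (conclusion).}
Young's inequality gives
\begin{equation*}
\|e^{-t\Lambda^\alpha}(\phi_j*f)\|_{L^p}\le \|K_{j,t}\|_{L^1}\|\phi_j*f\|_{L^p}\le Ce^{-ct2^{\alpha j}}\|\phi_j*f\|_{L^p}.
\end{equation*}
The constants depend only on $\alpha$ and the fixed cutoff. In the statement one may take $\alpha$ in a compact subset of $(0,2]$, or simply fix $\alpha$.

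\textbf{Main obstacle.}
The only delicate point is that $|\eta|^\alpha$ is not smooth at the origin when $\alpha<2$. The annular support of $\widetilde\varphi$ avoids the origin, and this is exactly why the lemma is stated for the $\phi_j$ pieces rather than for the low-frequency part $\psi*f$. The polynomial factor $(1+\tau)^2$ produced by differentiating the exponential is absorbed by giving up half of the exponential rate.
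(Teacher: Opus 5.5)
Your proof is correct and complete. The rescaling to $\tau=t2^{\alpha j}$, the bound $\|G_\tau\|_{L^1}\le C(1+\tau)^2e^{-8^{-\alpha}\tau}$ (from the weighted $L^2$ estimate and Plancherel on an annulus avoiding the origin), and Young's inequality give the claim with constants independent of $j$, $t$ and $p$. The paper gives no proof of its own, only the citation \cite{Wu_Yu_2008}, and your argument is the standard kernel estimate behind that result (the fractional analogue of the heat-semigroup lemma in \cite{Ba_Ch_Da_2011}).
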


%%BesovのSobolev埋め込み%%
\begin{lem}[\cite{Ba_Ch_Da_2011}]\label{0205-6}
  Let $s\in\mathbb{R}$, $1\leq p_1\leq p_2\leq\infty$ and $1\leq q_1\leq q_2\leq\infty$. 
  Then $B_{p_1,q_1}^s(\mathbb{R}^d)\hookrightarrow B_{p_2,q_2}^{s-d(1/p_1-1/p_2)}(\mathbb{R}^d)$ 
  i.e., there exists a constant $C>0$ such that 
  for any $f\in B_{p_1,q_1}^s(\mathbb{R}^d)$, we have 
  \begin{equation*}
    {\|f\|}_{B_{p_2,q_2}^{s-d\left(\frac{1}{p_1} - \frac{1}{p_2}\right)}}\leq C{\|f\|}_{B_{p_1,q_1}^s}. 
  \end{equation*}
\end{lem}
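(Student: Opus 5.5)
The plan is to reduce the embedding to a Bernstein-type inequality applied separately to each Littlewood--Paley block, and then to use the elementary inclusion $l^{q_1}\subset l^{q_2}$. Throughout, set $\sigma := d(1/p_1-1/p_2)\ge 0$.

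\emph{Bernstein step.} First I would prove that for every $j\in\mathbb{N}$,
\begin{equation*}
{\|\phi_j*f\|}_{L^{p_2}}\le C2^{j\sigma}{\|\phi_j*f\|}_{L^{p_1}},
\qquad
{\|\psi*f\|}_{L^{p_2}}\le C{\|\psi*f\|}_{L^{p_1}},
\end{equation*}
with $C$ independent of $j$ and $f$. To do this, fix $\chi\in\mathcal{S}(\mathbb{R}^d)$ with $\widehat\chi\equiv1$ on the annulus $\{3/16\le|\xi|\le 2/3\}$; this is the support of $\widehat{\phi_j}$ rescaled by $2^{-j}$. Put $\chi_j:=2^{jd}\chi(2^j\cdot)$. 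Then $\widehat{\chi_j}\equiv1$ on $\operatorname{supp}\widehat{\phi_j}$, so $\phi_j*f=\chi_j*(\phi_j*f)$.

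Next choose $r\in[1,\infty]$ with $1+1/p_2=1/r+1/p_1$; this is possible precisely because $p_1\le p_2$. Young's inequality gives
\begin{equation*}
{\|\phi_j*f\|}_{L^{p_2}}\le{\|\chi_j\|}_{L^r}{\|\phi_j*f\|}_{L^{p_1}}.
\end{equation*}
By scaling, ${\|\chi_j\|}_{L^r}=2^{jd(1-1/r)}{\|\chi\|}_{L^r}$, and $1-1/r=1/p_1-1/p_2$, so the factor is exactly $C2^{j\sigma}$. For the low-frequency block, take $\tilde\chi\in\mathcal{S}$ with $\widehat{\tilde\chi}\equiv1$ on $\{|\xi|\le4/3\}$. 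Then $\psi*f=\tilde\chi*(\psi*f)$, and Young's inequality gives the second bound with constant ${\|\tilde\chi\|}_{L^r}$.

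\emph{Summation step.} Multiplying the block estimate by $2^{(s-\sigma)j}$ gives
\begin{equation*}
2^{(s-\sigma)j}{\|\phi_j*f\|}_{L^{p_2}}\le C2^{sj}{\|\phi_j*f\|}_{L^{p_1}}.
\end{equation*}
Taking the $l^{q_2}$ norm over $j\in\mathbb{N}$ and using ${\|a\|}_{l^{q_2}}\le{\|a\|}_{l^{q_1}}$ for $q_1\le q_2$ bounds the high-frequency part of ${\|f\|}_{B^{s-\sigma}_{p_2,q_2}}$ by $C{\|f\|}_{B^s_{p_1,q_1}}$. Adding the low-frequency estimate completes the proof. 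One should also check that $f\in\mathcal{S}'$ is unchanged, so membership in $B^{s-\sigma}_{p_2,q_2}$ follows directly.

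The only point needing care is the uniformity of the constant in $j$. It comes from choosing a single profile $\chi$ adapted to the rescaled annulus, so that all $j$-dependence enters through the explicit factor $2^{jd(1-1/r)}$. I do not expect a genuine obstacle beyond this.
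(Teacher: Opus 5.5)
Your proof is correct. It is the standard argument behind the embedding, which the paper does not prove itself but simply cites from Bahouri--Chemin--Danchin. That argument is a Bernstein inequality ${\|\phi_j*f\|}_{L^{p_2}}\le C2^{jd(1/p_1-1/p_2)}{\|\phi_j*f\|}_{L^{p_1}}$, obtained here from Young's inequality with a rescaled kernel, followed by the inclusion $l^{q_1}\subset l^{q_2}$. Your annulus $\{3/16\le|\xi|\le 2/3\}$ matches the paper's block supports exactly, so the constant is indeed uniform in $j$.
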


%%BesovとSobolevの埋め込み(q = 1,\infty)%%
\begin{lem}[\cite{Ba_Ch_Da_2011}]\label{0205-4}
  Let $s\in \mathbb{R}$ and $1\leq p\leq \infty$. 
  Then $B_{p,1}^{s}\hookrightarrow W^{s,p}$ 
  i.e., there exists a constant $C>0$ such that for any $f\in B_{p,1}^{s}$, we have 
  \begin{equation}\label{0206-8}
    {\|f\|}_{W^{s,p}}\leq C {\|f\|}_{B_{p,1}^{s}}, 
  \end{equation}
  and $W^{s,p}\hookrightarrow B_{p,\infty}^{s}$
  i.e., there exist constant $C>0$ such that 
  for any $f\in W^{s,p}$, we have 
  \begin{equation*}
    {\|f\|}_{B_{p,\infty}^{s}}\leq C{\|f\|}_{W^{s,p}}.
  \end{equation*}
\end{lem}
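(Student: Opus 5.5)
The plan is to work frequency block by frequency block and reduce both embeddings to a single uniform kernel estimate, using Young's inequality $\|K*g\|_{L^p}\le \|K\|_{L^1}\|g\|_{L^p}$. That inequality holds for all $1\le p\le\infty$, so the endpoint cases need no separate treatment. I interpret $W^{s,p}$ as the non-homogeneous (Bessel potential) space with norm $\|(1-\Delta)^{s/2}f\|_{L^p}$. I also take the Littlewood--Paley family in the standard dyadic form $\phi_j=2^{jd}\phi(2^j\cdot)$, as in \cite{Ba_Ch_Da_2011}.

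Fix a fattened cutoff $\widetilde\phi\in\mathcal S$ whose Fourier transform is supported in an annulus and equals $1$ on $\operatorname{supp}\widehat\phi$. Set $\widetilde\phi_j=2^{jd}\widetilde\phi(2^j\cdot)$, so that $\phi_j*f=\widetilde\phi_j*\phi_j*f$. Similarly choose $\widetilde\psi$ with $\widehat{\widetilde\psi}=1$ on $\operatorname{supp}\widehat\psi$.

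\textbf{Key step (uniform kernel bound).} For every $\sigma\in\mathbb R$ there is $C>0$ such that, for all $j\ge 1$,
\begin{equation*}
\big\|\mathcal F^{-1}\big[(1+|\xi|^2)^{\sigma/2}\widehat{\widetilde\phi_j}(\xi)\big]\big\|_{L^1}\le C2^{\sigma j},
\end{equation*}
and the analogous kernel built from $\widehat{\widetilde\psi}$ is in $L^1$. To prove this, substitute $\xi=2^j\eta$; since $L^1$ norms are invariant under this dilation, the kernel's $L^1$ norm equals that of $\mathcal F^{-1}[m_j]$, where
\begin{equation*}
m_j(\eta)=2^{\sigma j}\,(2^{-2j}+|\eta|^2)^{\sigma/2}\,\widehat{\widetilde\phi}(\eta).
\end{equation*}
On the annulus containing $\operatorname{supp}\widehat{\widetilde\phi}$, the factor $(2^{-2j}+|\eta|^2)^{\sigma/2}$ is smooth, and all of its derivatives up to order $d+1$ are bounded uniformly in $j\ge1$. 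Hence $2^{-\sigma j}m_j$ is bounded in $C^{d+1}_c$, uniformly in $j$, with a fixed support. The standard bound
\begin{equation*}
\|\mathcal F^{-1}m\|_{L^1}\le C\sum_{|\beta|\le d+1}\|\partial^\beta m\|_{L^1}
\end{equation*}
then gives the claim. This bound comes from integrating by parts to get $(1+|x|)^{-(d+1)}$ decay. The low-frequency kernel is handled the same way, since $(1+|\xi|^2)^{\sigma/2}\widehat{\widetilde\psi}$ is smooth with compact support. I expect this step to be the only real work.

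\textbf{First embedding, $B^s_{p,1}\hookrightarrow W^{s,p}$.} Write $f=\psi*f+\sum_{j\ge1}\phi_j*f$; when $f\in B^s_{p,1}$, this series converges in $\mathcal S'$. Apply the key step with $\sigma=s$:
\begin{equation*}
\|(1-\Delta)^{s/2}\phi_j*f\|_{L^p}=\big\|\big((1-\Delta)^{s/2}\widetilde\phi_j\big)*\phi_j*f\big\|_{L^p}\le C2^{sj}\|\phi_j*f\|_{L^p},
\end{equation*}
and likewise the low-frequency term is bounded by $C\|\psi*f\|_{L^p}$. Summing over $j$ shows that the series $\sum_j(1-\Delta)^{s/2}\phi_j*f$ converges absolutely in $L^p$, and its sum coincides in $\mathcal S'$ with $(1-\Delta)^{s/2}f$. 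This yields \eqref{0206-8}.

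\textbf{Second embedding, $W^{s,p}\hookrightarrow B^s_{p,\infty}$.} Write $\phi_j*f=\big((1-\Delta)^{-s/2}\widetilde\phi_j\big)*\phi_j*(1-\Delta)^{s/2}f$. The key step with $\sigma=-s$ bounds the first kernel in $L^1$ by $C2^{-sj}$, and $\|\phi_j\|_{L^1}=\|\phi\|_{L^1}$ is independent of $j$. Hence
\begin{equation*}
2^{sj}\|\phi_j*f\|_{L^p}\le C\|(1-\Delta)^{s/2}f\|_{L^p}
\end{equation*}
uniformly in $j$, and the same argument bounds $\|\psi*f\|_{L^p}$. Taking the supremum over $j$ gives ${\|f\|}_{B^s_{p,\infty}}\le C{\|f\|}_{W^{s,p}}$.
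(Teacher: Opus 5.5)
Your proof is correct, given the dilation structure $\phi_j=2^{jd}\phi(2^j\cdot)$ that you make explicit. That assumption is needed: the uniform bound $\|\phi_j\|_{L^1}\le C$ in your second embedding depends on it, and the paper's definition only prescribes the supports of $\widehat{\phi_j}$. The paper gives no proof of its own and simply cites Bahouri--Chemin--Danchin, whose argument uses the same ingredients as yours: a uniform $L^1$ bound for the symbol $(1+|\xi|^2)^{\sigma/2}$ localized to dyadic annuli, plus Young's inequality. There it is usually packaged as $B^0_{p,1}\hookrightarrow L^p\hookrightarrow B^0_{p,\infty}$ together with the lifting property that $(1-\Delta)^{s/2}$ maps $B^s_{p,r}$ isomorphically onto $B^0_{p,r}$; your single kernel estimate merges these two steps into one.
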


%%BesovとSobolevの埋め込み(q=2)%%
\begin{lem}[\cite{Ba_Ch_Da_2011}]\label{0205-8}
  Let $s\in\mathbb{R}$ and $1<p<\infty$. 
  If $1<p\leq 2$, then $W^{s,p}\hookrightarrow B_{p,2}^{s}$ i.e., there exists a constant $C>0$ such that for any $f\in W^{s,p}$, we have 
  \begin{equation}\label{0409-4}
    {\|f\|}_{B_{p,2}^{s}}\leq C {\|f\|}_{W^{s,p}}, 
  \end{equation}
  and if $2\leq p <\infty$, then $B_{p,2}^{s}\hookrightarrow W^{s,p}$ i.e., there exists a constant $C>0$ such that for any $f\in B_{p,2}^{s}$, we have 
  \begin{equation}\label{0409-5}
    {\|f\|}_{W^{s,p}}\leq C {\|f\|}_{B_{p,2}^{s}}. 
  \end{equation}
  In particular, $B_{2,2}^{s} = H^{s}$. 
\end{lem}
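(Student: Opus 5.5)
The plan is to reduce both embeddings to a comparison between the mixed norms $L^{p}(\ell^{2})$ and $\ell^{2}(L^{p})$, using the Littlewood--Paley square function characterization of $W^{s,p}$ for $1<p<\infty$, followed by Minkowski's inequality in $L^{p/2}$. Throughout, $W^{s,p}$ is the non-homogeneous (Bessel potential) space, and I write $\Delta_{-1}f:=\psi*f$, $\Delta_{j}f:=\phi_{j}*f$ for $j\in\mathbb{N}$.

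\emph{Step 1: reduction to $s=0$.} Let $J^{s}:=(1-\Delta)^{s/2}$. It is an isomorphism $W^{t,p}\to W^{t-s,p}$ and $B^{t}_{p,q}\to B^{t-s}_{p,q}$ for every $t$. It commutes with each $\Delta_{j}$. Moreover, on the dyadic annulus the symbol $(1+|\xi|^{2})^{s/2}2^{-sj}$ is a smooth multiplier with uniformly bounded Mikhlin norms. By Lemma~\ref{0205-7}-type arguments this gives
\begin{equation*}
  {\|\Delta_{j}J^{s}f\|}_{L^{p}}\simeq 2^{sj}{\|\Delta_{j}f\|}_{L^{p}}
\end{equation*}
uniformly in $j$. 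Hence it suffices to prove the case $s=0$, namely $L^{p}\hookrightarrow B^{0}_{p,2}$ for $1<p\le2$, and $B^{0}_{p,2}\hookrightarrow L^{p}$ for $2\le p<\infty$.

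\emph{Step 2: square function.} For $1<p<\infty$, the Littlewood--Paley theorem (a vector-valued Calder\'on--Zygmund / Mikhlin argument, quoted from \cite{Ba_Ch_Da_2011}) gives
\begin{equation*}
  c{\|f\|}_{L^{p}}\le{\Big\|\Big(\sum_{j\ge-1}|\Delta_{j}f|^{2}\Big)^{1/2}\Big\|}_{L^{p}}\le C{\|f\|}_{L^{p}}.
\end{equation*}
I expect this to be the only nontrivial analytic input, and hence the main obstacle. The remaining steps are elementary, so I would cite this result rather than reprove it.

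\emph{Step 3: Minkowski in $L^{p/2}$.} Write
\begin{equation*}
  {\|(\sum|\Delta_{j}f|^{2})^{1/2}\|}_{L^{p}}^{2}={\|\sum_{j}|\Delta_{j}f|^{2}\|}_{L^{p/2}}.
\end{equation*}
If $p\ge2$, then $L^{p/2}$ is a normed space, and the triangle inequality gives
\begin{equation*}
  {\|\sum_{j}|\Delta_{j}f|^{2}\|}_{L^{p/2}}\le\sum_{j}{\|\Delta_{j}f\|}_{L^{p}}^{2}={\|f\|}_{B^{0}_{p,2}}^{2}.
\end{equation*}
Combined with the lower bound of Step 2, this yields \eqref{0409-5}. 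If $1<p\le2$, then $p/2\le1$, and the reverse Minkowski inequality for nonnegative functions in $L^{r}$, $0<r\le1$, gives
\begin{equation*}
  {\|\sum_{j}|\Delta_{j}f|^{2}\|}_{L^{p/2}}\ge\sum_{j}{\|\Delta_{j}f\|}_{L^{p}}^{2}.
\end{equation*}
Combined with the upper bound of Step 2, this yields \eqref{0409-4}.

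\emph{Step 4: $p=2$ and density.} For $p=2$ both inclusions hold, so $B^{s}_{2,2}=H^{s}$. This can also be seen directly from Plancherel, since $\sum_{j}|\widehat{\Delta_{j}f}|^{2}\simeq|\widehat f|^{2}$ pointwise. For \eqref{0409-5}, the square-function lower bound is first applied to $f\in\mathcal{S}$, or to finite Littlewood--Paley sums. It then extends to all $f\in B^{0}_{p,2}$ because the partial sums converge in $L^{p}$ when $q=2<\infty$ and $p<\infty$.
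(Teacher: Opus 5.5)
Your proposal is correct and is the standard proof of these embeddings: lift by $J^{s}=(1-\Delta)^{s/2}$ to reduce to $s=0$, identify $L^{p}$ with $L^{p}(\ell^{2})$ through the Littlewood--Paley square function for $1<p<\infty$, compare $L^{p}(\ell^{2})$ with $\ell^{2}(L^{p})$ via Minkowski's inequality in $L^{p/2}$ (for $p\ge 2$) or its reverse form (for $p\le 2$), and use a Cauchy argument on partial sums to see that an element of $B^{0}_{p,2}$ is genuinely an $L^{p}$ function. The paper gives no proof of its own and only cites \cite{Ba_Ch_Da_2011}, so there is no alternative route to compare; note only that identities such as $\sum_{j}\|\Delta_{j}f\|_{L^{p}}^{2}=\|f\|_{B^{0}_{p,2}}^{2}$ hold up to a constant factor under the paper's normalization $\|\psi*f\|_{L^{p}}+\|\{\|\phi_{j}*f\|_{L^{p}}\}_{j\in\mathbb{N}}\|_{\ell^{2}}$, which is harmless.
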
 

%%Besovのduality%% 
\begin{lem}[\cite{Tr_1983}]\label{0212-5}
  Let $s\in \mathbb{R}$, $1\leq p, p', q, q'<\infty$ with $1 = 1/p + 1/p' = 1/q + 1/q'$. 
  Then the dual space of $B_{p,q}^{s}$ is identified with $B_{p',q'}^{-s}$. 
  Moreover, for any $f\in B_{p,q}^{s}$, we have 
  \begin{equation*}
    {\|f\|}_{B_{p,q}^{s}} 
    = \sup_{{\|g\|}_{B_{p',q'}^{-s}}=1}|\langle f,g \rangle|. 
  \end{equation*}
\end{lem}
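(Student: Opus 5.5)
The plan is the standard retraction argument, which reduces the duality of $B^{s}_{p,q}$ to the duality of a vector-valued sequence space. Write $\Delta_{-1}f:=\psi*f$ and $\Delta_j f:=\phi_j*f$ for $j\in\mathbb{N}$, and set $2^{-s}:=1$ at the index $-1$ for the low-frequency block. Define the enlarged blocks $\widetilde{\Delta}_j:=\Delta_{j-1}+\Delta_j+\Delta_{j+1}$, with the obvious modification near $j=-1$, so that $\widetilde{\Delta}_j\Delta_j=\Delta_j$ by the support properties of $\widehat{\psi}$ and $\widehat{\phi}_j$. The norm in the statement should be read up to a multiplicative constant, i.e. as equivalent norms (or, equivalently, after renorming $B^{-s}_{p',q'}$ by the dual norm). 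This is how the equality is used later in the paper.

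\textbf{Step 1 (the pairing is bounded).} For $f\in\mathcal{S}$ and $g\in B^{-s}_{p',q'}$, I will define the pairing by
\begin{equation*}
\langle f,g\rangle=\sum_{j\geq -1}\langle \Delta_j f,\widetilde{\Delta}_j g\rangle .
\end{equation*}
Applying H\"older's inequality in $x$ and then in $j$, with the weights $2^{sj}$ and $2^{-sj}$, gives
\begin{equation*}
|\langle f,g\rangle|\leq C\,{\|f\|}_{B^{s}_{p,q}}\,{\|g\|}_{B^{-s}_{p',q'}},
\end{equation*}
because ${\|\widetilde{\Delta}_j g\|}_{L^{p'}}$ is controlled by three neighbouring blocks of $g$. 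Since $p,q<\infty$, $\mathcal{S}$ is dense in $B^{s}_{p,q}$, so the pairing extends uniquely. Hence every $g\in B^{-s}_{p',q'}$ defines a functional whose norm is at most $C{\|g\|}_{B^{-s}_{p',q'}}$, and the map $g\mapsto\langle\cdot,g\rangle$ is injective, since testing against $\mathcal{S}$ recovers $g$ as a distribution.

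\textbf{Step 2 (every functional is of this form).} Let $\ell^{q}_{s}(L^{p})$ denote the space of sequences $(h_j)_{j\geq-1}$ with norm $\|(2^{sj}{\|h_j\|}_{L^p})\|_{\ell^q}$. The map $J:f\mapsto(\Delta_j f)_j$ is an isometric embedding of $B^{s}_{p,q}$ into $\ell^{q}_{s}(L^{p})$. Given $\Phi\in(B^{s}_{p,q})'$, I transfer it to $J(B^{s}_{p,q})$ and extend it by Hahn--Banach to all of $\ell^{q}_{s}(L^{p})$ with the same norm. Since $1\leq p,q<\infty$, the dual of $\ell^{q}_{s}(L^{p})$ is $\ell^{q'}_{-s}(L^{p'})$ (the vector-valued $\ell^q$ duality; for $p=1$ one uses $(L^1)'=L^\infty$). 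So there exist $g_j\in L^{p'}$ with
\begin{equation*}
\Phi(f)=\sum_j\langle \Delta_j f,g_j\rangle, \qquad \|(2^{-sj}{\|g_j\|}_{L^{p'}})\|_{\ell^{q'}}=\|\Phi\|.
\end{equation*}
I then set $g:=\sum_j\Delta_j g_j$, using the fact that each $\Delta_j$ is symmetric because the kernels are even.

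\textbf{Step 3 (the candidate lies in the right space).} The remaining, and main technical, point is to show that $g$ converges in $\mathcal{S}'$, lies in $B^{-s}_{p',q'}$ with ${\|g\|}_{B^{-s}_{p',q'}}\leq C\|\Phi\|$, and reproduces $\Phi$. For the bound, almost orthogonality gives $\Delta_k g=\sum_{|j-k|\leq1}\Delta_k\Delta_j g_j$. Combined with the uniform $L^{p'}$-boundedness of $\Delta_k$ (Young's inequality, since ${\|\phi_k\|}_{L^1}$ is uniformly bounded), this yields
\begin{equation*}
2^{-sk}{\|\Delta_k g\|}_{L^{p'}}\leq C\sum_{|j-k|\leq1}2^{-sj}{\|g_j\|}_{L^{p'}},
\end{equation*}
and the $\ell^{q'}$ bound follows. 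The same block estimate against a Schwartz test function, whose Littlewood--Paley pieces decay faster than any power of $2^{-j}$, gives convergence in $\mathcal{S}'$.

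For the reproducing identity, I check $\Phi(f)=\langle f,g\rangle$ on $f\in\mathcal{S}$ by rearranging the absolutely convergent double sum. This requires some care, because the Hahn--Banach extension need not vanish on the complement of $J(B^{s}_{p,q})$. The fix is to apply the extended functional only to elements of the form $Jf$: then
\begin{equation*}
\Phi(f)=\sum_j\langle\Delta_j f,g_j\rangle=\sum_j\langle f,\Delta_j g_j\rangle=\langle f,g\rangle,
\end{equation*}
and this is precisely where the symmetry of $\Delta_j$ is used. Density then extends the identity to all $f\in B^{s}_{p,q}$.

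Combining Steps 1--3 gives the identification, the two-sided norm equivalence, and the supremum formula. The supremum formula follows from Hahn--Banach applied in $B^{s}_{p,q}$, with the supremum taken over the unit sphere of the identified dual.
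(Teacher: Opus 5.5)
Your argument is correct. It is essentially the standard proof in the cited source: Triebel embeds $B^{s}_{p,q}$ isometrically into $\ell^{q}_{s}(L^{p})$, applies Hahn--Banach together with the duality $(\ell^{q}_{s}(L^{p}))'=\ell^{q'}_{-s}(L^{p'})$, and reconstructs $g=\sum_j \Delta_j g_j$. The paper gives no proof and simply quotes the result. Your caveat that the norm identity holds only up to equivalence (or after renorming by the dual norm) is necessary rather than merely cautious. With the paper's fixed Littlewood--Paley norm, the literal equality already fails for $p=q=2$, $s=0$: test it on an $f$ whose Fourier transform is localized near a point where two neighbouring $\widehat{\phi}_j$ both equal $1/2$. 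The paper only ever uses the inequality $|\langle f,g\rangle|\le C\|f\|_{B^{s}_{p,q}}\|g\|_{B^{-s}_{p',q'}}$, which your argument gives.
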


%%Fractional semigroupに対するKozono--Ogawa--Taniuchi型の評価%%
The following lemma describes the smoothing effect of the fractional heat semigroup in the non-homogeneous Besov spaces. 
The case $\alpha = 2$ was established by Kozono, Ogawa and Taniuchi~\cite{Ko_Og_Ta_2003}. 
When $0<\alpha <2$, the proof is similar to that of Zhai~\cite{Zh_2010} in the homogeneous Besov spaces. 
\begin{lem}\label{0206-1}
  Let $0<\alpha\leq 2$, $t>0$, $s_1, s_2\in \mathbb{R}$ with $s_1\leq s_2$ and $1\leq p, q \leq \infty$. 
  Then there exists a constant $C>0$ such that for any $f\in B_{p,q}^{s_{1}}$, we have 
  \begin{equation*}
    {\|e^{-t\Lambda^{\alpha}}f\|}_{B_{p, q}^{s_2}} 
    \leq C (1 + t^{-\frac{1}{\alpha}(s_{2} - s_{1})}){\|f\|}_{B_{p, q}^{s_1}}. 
  \end{equation*}
\end{lem}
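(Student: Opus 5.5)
The statement to prove is Lemma~\ref{0206-1}: for $s_1\le s_2$ we need ${\|e^{-t\Lambda^{\alpha}}f\|}_{B_{p,q}^{s_2}}\le C(1+t^{-\frac{1}{\alpha}(s_2-s_1)}){\|f\|}_{B_{p,q}^{s_1}}$. The plan is to estimate the low-frequency block and each dyadic block separately, using that $e^{-t\Lambda^\alpha}$ commutes with the convolutions $\psi*$ and $\phi_j*$.

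\textbf{Low-frequency block.} The kernel of $e^{-t\Lambda^\alpha}$ is a probability density for $0<\alpha\le 2$, so $e^{-t\Lambda^\alpha}$ is a contraction on $L^p$ for every $1\le p\le\infty$. Hence
\[
{\|\psi*e^{-t\Lambda^\alpha}f\|}_{L^p}={\|e^{-t\Lambda^\alpha}(\psi*f)\|}_{L^p}\le{\|\psi*f\|}_{L^p}.
\]
This term is therefore bounded by $\|f\|_{B^{s_1}_{p,q}}$ with no power of $t$.

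\textbf{Dyadic blocks.} For $j\in\mathbb{N}$, Lemma~\ref{0206-2} gives ${\|\phi_j*e^{-t\Lambda^\alpha}f\|}_{L^p}\le Ce^{-ct2^{\alpha j}}{\|\phi_j*f\|}_{L^p}$. Multiplying by $2^{s_2 j}=2^{(s_2-s_1)j}\,2^{s_1 j}$ reduces everything to bounding the multiplier $\sup_{j\ge 1}2^{(s_2-s_1)j}e^{-ct2^{\alpha j}}$.

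Set $\sigma=s_2-s_1\ge 0$ and $\lambda=t2^{\alpha j}$. Then
\[
2^{\sigma j}e^{-ct2^{\alpha j}}=t^{-\sigma/\alpha}\lambda^{\sigma/\alpha}e^{-c\lambda}\le C_\sigma t^{-\sigma/\alpha},
\]
because $\sup_{\lambda>0}\lambda^{\sigma/\alpha}e^{-c\lambda}<\infty$. When $\sigma=0$ the bound is simply $1$.

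\textbf{Conclusion.} Combining the two pieces gives
\[
{\|e^{-t\Lambda^\alpha}f\|}_{B^{s_2}_{p,q}}\le{\|\psi*f\|}_{L^p}+C t^{-\sigma/\alpha}{\left\|\left\{2^{s_1 j}{\|\phi_j*f\|}_{L^p}\right\}_{j\in\mathbb{N}}\right\|}_{l^q},
\]
which is at most $C(1+t^{-\frac{1}{\alpha}(s_2-s_1)})\|f\|_{B^{s_1}_{p,q}}$.

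The argument has no real obstacle. The only point to check is that Lemma~\ref{0206-2} applies uniformly in $j$ for every $0<\alpha\le 2$ and every $1\le p\le\infty$; in particular, the nonlocal case $\alpha<2$ is already covered by \cite{Wu_Yu_2008}.
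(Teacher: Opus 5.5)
Your argument is correct and matches the approach the paper relies on. The paper gives no proof of its own; it cites Kozono--Ogawa--Taniuchi for $\alpha=2$ and Zhai for the homogeneous case, and those proofs likewise bound the low-frequency block by $L^p$-boundedness of $e^{-t\Lambda^{\alpha}}$ and each dyadic block by Lemma~\ref{0206-2} together with $\sup_{\lambda>0}\lambda^{\sigma/\alpha}e^{-c\lambda}<\infty$, with a constant uniform in $t>0$, which is how the lemma is used later when it is integrated in time.
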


We recall several lemmas on bilinear estimate in non-homogeneous Besov spaces. 
%%Besovの積の評価(周波数離れているところ)%%
\begin{lem}[\cite{Ba_Ch_Da_2011}]\label{0205-5}
  Let $s\in \mathbb{R}$ and $1\leq p, q, p_1, p_2\leq \infty$ with $1/p=1/p_1 + 1/p_2$. 
  Then there exists a constant $C>0$ such that for any $f\in L^{p_1}$ and $g \in B_{p_2, q}^{s}$, we have 
  \begin{equation}\label{0212-6}%%基本系%%
    {\left\|\sum_{l\geq 2}(\psi*f)(\phi_{l}*g) + \sum_{k\leq l - 2}(\phi_k*f)(\phi_l*g)\right\|}_{B_{p,q}^{s}} 
    \leq C {\|f\|}_{L^{p_1}}{\|g\|}_{B_{p_2, q}^{s}}. 
  \end{equation}
  Also, let $\varepsilon>0$ and $1\leq q_1, q_2 \leq \infty$ with $1/q=1/q_1 + 1/q_2$. 
  Then there exists a constant $C>0$ such that for any $f\in B_{p_1, q_1}^{-\varepsilon}$ and $g \in B_{p_2, q_2}^{s}$, we have 
  \begin{equation}\label{0212-9}%%負のregularity%%
    {\left\|\sum_{l\geq 2}(\psi*f)(\phi_{l}*g) + \sum_{k\leq l - 2}(\phi_k*f)(\phi_l*g)\right\|}_{B_{p,q}^{s-\varepsilon}} 
    \leq C {\|f\|}_{B_{p_1, q_1}^{-\varepsilon}}{\|g\|}_{B_{p_2, q_2}^{s}}. 
  \end{equation}
\end{lem}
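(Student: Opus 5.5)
This is a standard paraproduct estimate, so I would prove it by direct Littlewood--Paley analysis of each block. Write
\[
P(f,g) := \sum_{l\geq 2}(\psi*f)(\phi_{l}*g) + \sum_{k\leq l - 2}(\phi_k*f)(\phi_l*g) = \sum_{l\geq 2} S_{l-1}f\,(\phi_l*g),
\]
where $S_{l-1}f := \psi*f + \sum_{1\le k\le l-2}\phi_k*f$ is the low-frequency cut-off. I read the indices $k$ in the second sum as $k\in\mathbb{N}$, consistent with the non-homogeneous decomposition.

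\textbf{Frequency localization.} By the support conditions in the definition of the Littlewood--Paley decomposition, $\widehat{S_{l-1}f}$ is supported in a ball of radius $\approx \tfrac13\cdot 2^{l-1}$. Hence each product $S_{l-1}f\,(\phi_l*g)$ has Fourier support in an annulus $\{c\,2^{l}\le|\xi|\le C\,2^{l}\}$. Consequently there is a fixed integer $N_0$ such that $\phi_j*\big(S_{l-1}f\,(\phi_l*g)\big)=0$ unless $|j-l|\le N_0$, and $\psi*$ of it vanishes unless $l\le N_0$. This almost-orthogonality is the only geometric input needed.

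\textbf{Proof of \eqref{0212-6}.} Young's inequality and the uniform $L^1$ bound on $\phi_j$ and $\psi$ give
\[
\|\phi_j*P(f,g)\|_{L^p}\le C\sum_{|l-j|\le N_0}\|S_{l-1}f\|_{L^{p_1}}\|\phi_l*g\|_{L^{p_2}}.
\]
Since $S_{l-1}f$ is $f$ convolved with a dilate of a fixed Schwarz function (the partition telescopes), we have $\|S_{l-1}f\|_{L^{p_1}}\le C\|f\|_{L^{p_1}}$ uniformly in $l$. Multiplying by $2^{sj}\sim 2^{sl}$ and taking the $l^q$ norm (a finite-band convolution) gives the bound $C\|f\|_{L^{p_1}}\|g\|_{B^s_{p_2,q}}$. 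The low-frequency piece $\|\psi*P(f,g)\|_{L^p}$ involves only $l\le N_0$ and is bounded the same way.

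\textbf{Proof of \eqref{0212-9}.} Since $\varepsilon>0$, we estimate
\[
\|S_{l-1}f\|_{L^{p_1}}\le \|\psi*f\|_{L^{p_1}}+\sum_{k\le l-2}\|\phi_k*f\|_{L^{p_1}}\le C\sum_{k\le l-2}2^{k\varepsilon}\,\|f\|_{B^{-\varepsilon}_{p_1,\infty}}\le C\,2^{l\varepsilon}\|f\|_{B^{-\varepsilon}_{p_1,\infty}}.
\]
Note that the geometric sum converges precisely because $\varepsilon>0$; this is the only delicate point. We then have
\[
2^{(s-\varepsilon)j}\|\phi_j*P(f,g)\|_{L^p}\le C\sum_{|l-j|\le N_0}\big(2^{-l\varepsilon}\|S_{l-1}f\|_{L^{p_1}}\big)\big(2^{sl}\|\phi_l*g\|_{L^{p_2}}\big).
\]
The first factor is bounded in $l^\infty$ by $\|f\|_{B^{-\varepsilon}_{p_1,\infty}}$. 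To get the $l^{q_1}$ version I would instead keep the sequence $a_l:=2^{-l\varepsilon}\sum_{k\le l-2}2^{k\varepsilon}\,2^{-k\varepsilon}\|\phi_k*f\|_{L^{p_1}}$ (plus the $\psi$ term). This is a discrete convolution of $\{2^{-k\varepsilon}\|\phi_k*f\|\}\in l^{q_1}$ with the summable kernel $2^{-m\varepsilon}\mathbf{1}_{m\ge2}$, so $\|a\|_{l^{q_1}}\le C\|f\|_{B^{-\varepsilon}_{p_1,q_1}}$ by Young's inequality. Hölder in $l$ with $1/q=1/q_1+1/q_2$ then yields the claim, and the $\psi$ block is again finite.

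The main obstacle, mild as it is, is this last sequence-space step: correctly producing the $l^{q_1}$ norm of $f$ through the convolution with $2^{-m\varepsilon}$. Everything else is bookkeeping of Fourier supports.
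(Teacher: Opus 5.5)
Your argument is correct. The paper gives no proof of its own and only cites Bahouri--Chemin--Danchin, and what you write is exactly the standard paraproduct proof found there: spectral localization of each block $S_{l-1}f\,(\phi_l*g)$ in an annulus of size $2^{l}$, and uniform boundedness of $S_{l-1}$ on $L^{p_1}$. For the negative-regularity estimate it then uses the discrete Young inequality with the summable kernel $2^{-m\varepsilon}\mathbf{1}_{m\ge 2}$, followed by H\"older in $\ell^{q}$ with $1/q=1/q_1+1/q_2$.

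The points you leave implicit are routine. The uniform $L^1$ bounds rely on the dyadic form $\phi_j=2^{2j}\phi_0(2^j\cdot)$, which the paper tacitly assumes. The series defining the paraproduct converges in $\mathcal{S}'$, thanks to the annulus localization and the polynomial growth of the blocks, and this is what justifies applying $\phi_j*$ termwise.
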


%%Besovの積の評価(remainder, 基本系)%% 
\begin{lem}\label{0212-7}
  Let $s_1, s_2\in \mathbb{R}$ with $s_1 + s_2>0$ and $1\leq p, q, p_1, p_2, q_1, q_2 \leq \infty$ with $1/p=1/p_1 + 1/p_2$ and $1/q=1/q_1 + 1/q_2$. 
  Then there exists a constant $C>0$ such that for any $f\in B_{p_1, q_1}^{s_1}$ and $g \in B_{p_2, q_2}^{s_2}$, we have 
  \begin{equation*}
    {\left\|\sum_{|k-l|\leq 1}(\phi_k*f)(\phi_l*g)\right\|}_{B_{p,q}^{s_1 + s_2}} 
    \leq C {\|f\|}_{B_{p_1, q_1}^{s_1}}{\|g\|}_{B_{p_2, q_2}^{s_2}}. 
  \end{equation*}
\end{lem}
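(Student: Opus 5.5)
The statement to prove is Lemma~\ref{0212-7}, the standard remainder estimate for paraproducts. I would prove it by localizing the product in frequency. For $|k-l|\le 1$ the Fourier transform of $(\phi_k*f)(\phi_l*g)$ is supported in a ball of radius $C2^{k}$, not in an annulus. This is the only point where the estimate differs from Lemma~\ref{0205-5}, and it is why $s_1+s_2>0$ is required. Write
\[
R:=\sum_{|k-l|\le 1}(\phi_k*f)(\phi_l*g).
\]
The first step is to estimate the dyadic block $\phi_j*R$ for $j\ge 1$, together with the low-frequency part $\psi*R$. For $j\ge1$ only terms with $k\ge j-N_0$ survive, for a fixed integer $N_0$ fixed by the support conditions. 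Young's inequality ($\|\phi_j\|_{L^1}$ is bounded uniformly in $j$) and H\"older's inequality with $1/p=1/p_1+1/p_2$ then give
\[
\|\phi_j*R\|_{L^p}\le C\sum_{k\ge j-N_0}\ \sum_{|l-k|\le1}\|\phi_k*f\|_{L^{p_1}}\|\phi_l*g\|_{L^{p_2}}.
\]
The same argument bounds $\|\psi*R\|_{L^p}$ by the full sum over $k$. Here I interpret indices $k,l\le 0$ via the non-homogeneous blocks: $\phi_0$ is replaced by $\psi$, and lower $\phi_k$ are absorbed into $\psi$. This is consistent with the definition of $B^s_{p,q}$.

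Next, set $a_k:=2^{s_1k}\|\phi_k*f\|_{L^{p_1}}$ and $b_l:=2^{s_2l}\|\phi_l*g\|_{L^{p_2}}$, so $\|a\|_{l^{q_1}}\lesssim\|f\|_{B^{s_1}_{p_1,q_1}}$ and similarly for $b$. Since $|k-l|\le1$, we have $2^{(s_1+s_2)k}\simeq 2^{s_1k+s_2l}$. Hence
\[
2^{(s_1+s_2)j}\|\phi_j*R\|_{L^p}\le C\sum_{k\ge j-N_0}2^{-(s_1+s_2)(k-j)}\,c_k,\qquad c_k:=\sum_{|l-k|\le1}a_kb_l .
\]
This is the convolution of $(c_k)$ with the kernel $m\mapsto 2^{-(s_1+s_2)m}\mathbf 1_{m\ge -N_0}$. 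Because $s_1+s_2>0$, this kernel is in $l^1$. Young's inequality on $\mathbb{Z}$ gives an $l^q$ bound by $C\|c\|_{l^q}$, and H\"older in $l^q$ with $1/q=1/q_1+1/q_2$ gives $\|c\|_{l^q}\le C\|a\|_{l^{q_1}}\|b\|_{l^{q_2}}$.

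The low-frequency term satisfies $\|\psi*R\|_{L^p}\le C\sum_k 2^{-(s_1+s_2)k}c_k$. This sum is bounded by $C\|c\|_{l^\infty}\le C\|a\|_{l^{q_1}}\|b\|_{l^{q_2}}$, again because $s_1+s_2>0$. The finitely many blocks $k\le 0$ contribute at most $C\|\psi*f\|_{L^{p_1}}\|\psi*g\|_{L^{p_2}}$ plus similar terms. Combining these gives the claimed estimate, and it also shows the series defining $R$ converges in $\mathcal S'$.

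The main obstacle is the first step: making the support bookkeeping precise for the specific annuli $\{3/8\cdot 2^{j-1}\le|\xi|\le 4/3\cdot2^{j-1}\}$, so as to pin down $N_0$, and handling the mismatch between $\psi$ and the $\phi_k$ with $k\le0$ at low frequencies. Once the restriction $k\ge j-N_0$ is justified, the rest is the routine discrete Young and H\"older argument above.
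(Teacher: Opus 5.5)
Your proposal is correct. It is the standard remainder estimate: each diagonal product has Fourier support in a ball of radius $\sim 2^{k}$, so $\phi_j*R$ only sees $k\ge j-N_0$; this is followed by Young's inequality on $\mathbb{Z}$ with the $\ell^1$ kernel $2^{-(s_1+s_2)m}\mathbf{1}_{m\ge -N_0}$ and H\"older in $\ell^q$. The support bookkeeping you flagged is harmless ($N_0=3$ works for the paper's annuli), and this is exactly the argument the paper relies on: it states the lemma without proof as a recalled fact from the Littlewood--Paley literature (Bahouri--Chemin--Danchin).
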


The next lemma establishes an bilinear estimate in Besov spaces with negative regularity indices, 
based on the derivative structure of the nonlinear terms in \eqref{SQG}. 
%%Derivative structureを使ったリマインダーの評価%% 
\begin{lem}[\cite{I_O_2026}]\label{0205-9}
  Let $s>-2$, $s'\in \mathbb{R}$, $1\leq p, q, p_1, p_2, q_1, q_2 \leq \infty$ with $1/p = 1/p_1 + 1/p_2$ and $1/q = 1/q_1 + 1/q_2$. 
  Then there exists a constant $C>0$ such that for any $f \in B_{p_1, q_1}^{s'}$ and $g\in B_{p_2, q_2}^{s+1-s'}$, we have 
  \begin{align*}
    &{\left\|\sum_{|k-l|\leq 1}\Big(\big((\nabla^{\perp}\Lambda^{-1}(\phi_k*f))\cdot\nabla\big)(\phi_l* g) + \big((\nabla^{\perp}\Lambda^{-1}(\phi_l*g))\cdot\nabla\big)(\phi_k* f)\Big)\right\|}_{B_{p,q}^{s}}\\ 
    \leq & C {\|f\|}_{B_{p_1, q_1}^{s'}}{\|g\|}_{B_{p_2, q_2}^{s+1-s'}}. 
  \end{align*}
\end{lem}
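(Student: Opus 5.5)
The natural route is to exploit the cancellation between the two products on the Fourier side, in the spirit of Iwabuchi–Ueda~\cite{I_U_2024}, and show that the resonant sum behaves like two derivatives falling on the \emph{output} frequency at the price of one inverse derivative on the input frequency. Write $f_k := \phi_k*f$ and $g_l := \phi_l*g$ with $|k-l|\leq 1$. Let $\eta$ and $\zeta$ be the frequencies of $f_k$ and $g_l$, and let $\xi=\eta+\zeta$. The bilinear symbol of
\[
\big((\nabla^{\perp}\Lambda^{-1}f_k)\cdot\nabla\big)g_l+\big((\nabla^{\perp}\Lambda^{-1}g_l)\cdot\nabla\big)f_k
\]
is, up to a constant factor $-1$,
\[
m(\eta,\zeta)=\frac{\eta^{\perp}\cdot\zeta}{|\eta|}+\frac{\zeta^{\perp}\cdot\eta}{|\zeta|}
=\eta^{\perp}\cdot\zeta\Big(\frac1{|\eta|}-\frac1{|\zeta|}\Big).
\]
Since $\eta^\perp\cdot\eta=0$, we have $\eta^{\perp}\cdot\zeta=\eta^{\perp}\cdot\xi$. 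Moreover $|\zeta|-|\eta|=\xi\cdot(\zeta-\eta)/(|\zeta|+|\eta|)$. Hence
\[
m(\eta,\zeta)=\sum_{i,j}\xi_i\xi_j\,\sigma_{ij}(\eta,\zeta),\qquad
\sigma_{ij}(\eta,\zeta)=\frac{\eta^{\perp}_i(\zeta-\eta)_j}{|\eta||\zeta|(|\eta|+|\zeta|)}.
\]
On the support $|\eta|\sim|\zeta|\sim 2^k$, each $\sigma_{ij}$ is smooth and homogeneous of degree $-1$. So the resonant block equals $\sum_{i,j}\partial_i\partial_j T^{k}_{ij}(f_k,g_l)$, where $T^k_{ij}$ is the bilinear multiplier with symbol $\sigma_{ij}$ multiplied by cutoffs $\widetilde\phi_k(\eta)\widetilde\phi_l(\zeta)$.

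The first technical step is the uniform bound
\[
\|T^k_{ij}(f_k,g_l)\|_{L^p}\leq C2^{-k}\|f_k\|_{L^{p_1}}\|g_l\|_{L^{p_2}}.
\]
By scaling, this reduces to $k=0$, where the localized symbol is a smooth compactly supported function of $(\eta,\zeta)$. Expanding it in a Fourier series on a cube containing the support writes it as $\sum_{a,b}c_{ab}e^{ia\cdot\eta}e^{ib\cdot\zeta}$ with rapidly decaying $c_{ab}$. Each term is then a product of translates of frequency-localized pieces of $f_k$ and $g_l$, and Hölder's inequality with $1/p=1/p_1+1/p_2$ gives the bound with constant independent of $k$.

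Next comes the block estimate. The product $T^k_{ij}(f_k,g_l)$ has frequency support in $\{|\xi|\leq C2^k\}$. Hence $\phi_m*(\cdot)$ vanishes unless $m\leq k+N_0$ for a fixed $N_0$, and by Lemma~\ref{0205-7} the two outer derivatives cost $2^{2m}$. This gives
\[
2^{sm}\Big\|\phi_m*\sum_{|k-l|\le1}(\cdots)\Big\|_{L^p}\leq C\sum_{k\geq m-N_0}2^{(s+2)(m-k)}\,\big(2^{s'k}\|f_k\|_{L^{p_1}}\big)\big(2^{(s+1-s')k}\|g_{l}\|_{L^{p_2}}\big),
\]
where the exponents add up because $s'k+(s+1-s')k+k=(s+2)k$. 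Since $s+2>0$, the kernel $2^{(s+2)(m-k)}\mathbf 1_{k\geq m-N_0}$ is summable. Young's inequality for sequences followed by Hölder in $l^q$ with $1/q=1/q_1+1/q_2$ then yields the high-frequency part of the $B^s_{p,q}$ norm. The low-frequency piece $\psi*(\cdot)$ is handled the same way, using~\eqref{0406-2} instead of the $2^{2m}$ factor. Finitely many terms involving $\psi*f$ or low $k$ only require the trivial bounds $\|\nabla\Lambda^{-1}\phi_k*f\|_{L^{p_1}}\le C\|\phi_k*f\|_{L^{p_1}}$.

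I expect the main obstacle to be the algebraic step: extracting \emph{two} output derivatives rather than just one. The identity $\nabla\cdot\nabla^\perp=0$ alone only gives a divergence form, which would require $s>-1$. The second derivative comes from the cancellation $|\zeta|-|\eta|=O(|\xi|)$, and it has to be written as a genuinely smooth symbol that the Fourier series argument can handle uniformly in $k$. A secondary point of care is the boundary frequencies $k\in\{-1,0\}$, where $\psi$ replaces $\phi_k$, since there the homogeneity argument must be replaced by a direct estimate.
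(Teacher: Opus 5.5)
Your proof is correct. The factorization $m(\eta,\zeta)=\sum_{i,j}\xi_i\xi_j\sigma_{ij}(\eta,\zeta)$, with $\sigma_{ij}$ smooth and homogeneous of degree $-1$ on $|\eta|\sim|\zeta|$, holds. So does the uniform bound $\|T^k_{ij}(f_k,g_l)\|_{L^p}\le C2^{-k}\|f_k\|_{L^{p_1}}\|g_l\|_{L^{p_2}}$, obtained from the Fourier series of the rescaled compactly supported symbol, and the block summation needs only $s+2>0$. As a by-product, the same bound shows $\sum_k T^k_{ij}$ converges absolutely in $L^p$, so the resonant sum is well defined in $\mathcal{S}'$.

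The paper does not prove this lemma; it imports it from~\cite{I_O_2026} and credits the derivative structure of the symmetrized nonlinearity to~\cite{I_U_2024}. Your double-derivative cancellation is that mechanism made explicit, so I cannot compare routes beyond this.

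Your caveat about $\psi$-blocks is unnecessary. The sum in the lemma runs only over annular pieces $\phi_k,\phi_l$, and the paper treats the $\psi$-terms separately in its Bony decompositions.
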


At the end of this subsection, we describe the frequency decomposition of functions used in the proof of the main theorem. 
The following lemmas follows immediately from the definition of Besov spaces. 
%%High freq.での挙動%% 
\begin{lem}\label{0212-2}
  Let $s\in \mathbb{R}$, $1\leq p \leq \infty$ and $1\leq q <\infty$. 
  Then for any $\varepsilon>0$ and $f\in B_{p,q}^{s}$, there exists $N\in \mathbb{N}$ such that we have 
  \begin{equation*}
    {\left\|\sum_{|j|>N}\phi_j*f\right\|}_{B_{p,q}^{s}}\leq \varepsilon. 
  \end{equation*}
\end{lem}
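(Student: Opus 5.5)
The plan is to split the tail into high and low frequencies, $\sum_{|j|>N}\phi_j*f = H_N + L_N$ with $H_N:=\sum_{j>N}\phi_j*f$ and $L_N:=\sum_{j<-N}\phi_j*f$, and to show that each has small $B^s_{p,q}$ norm once $N$ is large. Throughout I will use the standard dyadic structure $\phi_j = 2^{2j}\phi_0(2^j\cdot)$ underlying the Littlewood--Paley decomposition of \cite{Ba_Ch_Da_2011}; the paper does not state it explicitly.

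\emph{High frequencies.} For $j\ge 2$, the support of $\widehat{\phi_j}$ meets only the supports of $\widehat{\phi_{j-1}},\widehat{\phi_j},\widehat{\phi_{j+1}}$ and is disjoint from the support of $\widehat\psi$. Hence $\psi*H_N=0$, and $\phi_k*H_N=\sum_{|j-k|\le 1,\ j>N}\phi_k*\phi_j*f$. Young's inequality gives $\|\phi_k*\phi_j*f\|_{L^p}\le C\|\phi_j*f\|_{L^p}$. Therefore
\begin{equation*}
\|H_N\|_{B^s_{p,q}}\le C\Big(\sum_{j\ge N}2^{sjq}\|\phi_j*f\|_{L^p}^q\Big)^{1/q}.
\end{equation*}
This is the tail of a convergent series, because $q<\infty$ and $f\in B^s_{p,q}$, so it tends to $0$ as $N\to\infty$.

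\emph{Low frequencies.} For $j<-N$ with $N\ge 2$, the support of $\widehat{\phi_j}$ lies in $\{|\xi|\le \tfrac23 2^{-N}\}$. This set is disjoint from the supports of all $\widehat{\phi_k}$ with $k\ge1$, and on it $\widehat\psi=1$. Consequently $\phi_k*L_N=0$ for $k\in\mathbb N$, and
\begin{equation*}
\|L_N\|_{B^s_{p,q}}=\|\psi*L_N\|_{L^p}=\|L_N\|_{L^p}, \qquad L_N=\sum_{j<-N}\phi_j*h,\quad h:=\psi*f\in L^p.
\end{equation*}
By the dilation structure, $L_N=K_N*h$ with $K_N=2^{-2N}K(2^{-N}\cdot)$ and $\widehat K=\sum_{j<0}\widehat{\phi_j}$. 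The function $\widehat K$ is smooth and compactly supported (it equals $1$ near the origin apart from the origin itself), so $K\in\mathcal S$ and $\sup_N\|K_N\|_{L^1}<\infty$.

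For $1<p<\infty$ I will argue by density. The Schwartz functions whose Fourier transform vanishes near $0$ are dense in $L^p$. For such a function $g$ one has $K_N*g=0$ once $N$ is large, since $\widehat{K_N}$ is supported in a shrinking ball around the origin. Combined with the uniform bound $\|K_N*(h-g)\|_{L^p}\le C\|h-g\|_{L^p}$, this gives $\|L_N\|_{L^p}\to0$.

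\emph{Main obstacle.} The hard step is the low-frequency tail at the endpoints $p=1$ and $p=\infty$, where the density argument breaks down. For $p=1$ one has $K_N*h\approx(\int h)K_N$, whose $L^1$ norm does not decay unless $\int h=0$. For $p=\infty$, an $L^\infty$ function with mass at ever lower frequencies can keep $\|L_N\|_{L^\infty}$ bounded away from zero. In these cases I would first check whether the convention $\sum_{j\in\mathbb Z}$ can be replaced by $\sum_{j\ge 1}$ in the lemma, which is what the Besov norm actually sees. Alternatively I would restrict the lemma to $1<p<\infty$. That restriction appears harmless for the main theorems, since each application uses only the high-frequency part or $p$ in a finite range.
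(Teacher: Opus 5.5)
Your proof is correct for $1<p<\infty$, and it does more than the paper. The paper's only justification is that the lemma ``follows immediately from the definition of Besov spaces.'' That remark covers exactly your high-frequency step: $\psi*H_N=0$, only neighbouring blocks interact, and the bound is the tail of the $\ell^q$ series, which is where $q<\infty$ enters. The paper never addresses $\sum_{j<-N}\phi_j*f$. The $B^s_{p,q}$ norm sees this part only through $\psi*\cdot$, and controlling it is not definitional. Your identity $\|L_N\|_{B^s_{p,q}}=\|K_N*(\psi*f)\|_{L^p}$, with $K_N$ the rescaled kernel, followed by the density argument, is what is actually needed. The same estimate $\|K_M*h\|_{L^p}\to0$ also shows that the partial sums $\sum_{j=-M}^{-N-1}\phi_j*f=(K_N-K_M)*h$ converge in $L^p$; say so, since this is what justifies writing $L_N=K_N*h$. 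Reading the Littlewood--Paley family as dyadic dilations is the intended interpretation, and all the estimates the paper cites from Bahouri--Chemin--Danchin rely on it.

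Your endpoint worry is justified: with $j$ ranging over $\mathbb{Z}$, the statement is false for $p=1$ and for $p=\infty$.

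\emph{The case $p=1$.} Take any $f\in\mathcal{S}$ with $\int f\neq0$; then $f\in B^s_{1,q}$ for all $s,q$. The low-frequency series converges in $\mathcal{S}'$ to $K_N*f$, because $\|K_M*f\|_{L^\infty}\le 2^{-2M}\|K\|_{L^\infty}\|f\|_{L^1}$. For every $N\ge2$ this gives
\[
\Big\|\sum_{|j|>N}\phi_j*f\Big\|_{B^s_{1,q}}\ \ge\ \|K_N*f\|_{L^1}\ \ge\ \Big|\int_{\mathbb{R}^2}K_N*f\,{\rm d}x\Big|=\Big|\int_{\mathbb{R}^2}f\,{\rm d}x\Big|.
\]

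\emph{The case $p=\infty$.} Here the series need not converge in $\mathcal{S}'$ at all. For instance, take $f=\cos(\lambda\log(1+|x|^2))$ with small $\lambda>0$; then $\langle K_M*f,g\rangle$ oscillates as $M\to\infty$ for any $g\in\mathcal{S}$ with $\int g\neq0$. Even when the series converges, the norm need not decay. Take $f=\sum_k\chi((\cdot-x_k)/R_k)$, where $\chi\in C_c^\infty$ equals $1$ on the unit ball, $R_k\to\infty$, and the centres $x_k$ are so sparse that $f$ has zero density at infinity. Then the series converges to $K_N*f$, yet $K_N*f(x_k)\to1$ as $k\to\infty$ for each fixed $N$.

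So the lemma should be stated either for $1<p<\infty$, or with the inhomogeneous splitting $\psi*f+\sum_{1\le j\le N}\phi_j*f$ versus $\sum_{j>N}\phi_j*f$. In the second version, your first step alone proves the lemma for every $p$. Either version suffices for the paper. The lemma is only applied with $p=4/(\alpha-1)$, with $2/(\alpha-1)\le p<4/(\alpha-1)$, or with $2\le p<\infty$, and the companion Lemma~\ref{0212-3} holds equally for the inhomogeneous low-frequency part.
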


%%Low freq. での挙動%% 
\begin{lem}\label{0212-3}
  Let $s\in \mathbb{R}$, $1\leq p \leq \infty$ and $1\leq q <\infty$. 
  Then for any $s'\in\mathbb{R}$, $N\in \mathbb{N}$ and $f\in B_{p,q}^{s}$, we have 
  \begin{equation*}
    {\left\|\sum_{|j|\leq N}\phi_j*f\right\|}_{B_{p,q}^{s+s'}} < \infty. 
  \end{equation*}
\end{lem}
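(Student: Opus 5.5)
The claim is Lemma~\ref{0212-3}: for $f\in B_{p,q}^{s}$ and $N\in\mathbb{N}$, the finite frequency truncation $f_N:=\sum_{|j|\leq N}\phi_j*f$ lies in $B_{p,q}^{s+s'}$ for every $s'\in\mathbb{R}$. The plan is to compute the Besov norm of $f_N$ block by block, using that $\widehat{f_N}$ is supported in the bounded annulus-union $\{|\xi|\leq \frac43 2^{N-1}\}$. This support is the whole point, so the argument is essentially finite bookkeeping.

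First I check that $f_N$ is a well-defined element of $\mathcal S'$ with each piece in $L^p$. For $j\geq 1$ we have $\phi_j*f\in L^p$ with $\|\phi_j*f\|_{L^p}\leq 2^{-sj}\|f\|_{B^s_{p,q}}$ directly from the definition. For $j\leq 0$, $\widehat{\phi_j}$ is supported in $|\xi|\leq 4/3\cdot 2^{-1}$, where $\widehat\psi+\widehat{\phi_1}=1$ (only $\phi_1$ among $j\geq1$ can meet this ball). Hence
\[
\phi_j*f=\phi_j*(\psi*f)+\phi_j*(\phi_1*f),
\]
and Young's inequality with $\sup_j\|\phi_j\|_{L^1}<\infty$ (the $\phi_j$ are dilates of one profile) gives $\|\phi_j*f\|_{L^p}\leq C\|f\|_{B^s_{p,q}}$ uniformly in $j\leq 0$. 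Since the sum is finite, $f_N\in L^p$ with $\|f_N\|_{L^p}\leq C_N\|f\|_{B^s_{p,q}}$.

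Next I estimate the Besov norm of $f_N$ at regularity $s+s'$. The low-frequency part satisfies $\|\psi*f_N\|_{L^p}\leq \|\psi\|_{L^1}\|f_N\|_{L^p}$. For $l\in\mathbb{N}$, $\phi_l*f_N=0$ unless the supports of $\widehat{\phi_l}$ and $\widehat{f_N}$ intersect, which forces $l\leq N+2$, say. Only finitely many $l$ contribute, each with $\|\phi_l*f_N\|_{L^p}\leq C\|f_N\|_{L^p}$. Therefore
\[
\|f_N\|_{B^{s+s'}_{p,q}}\leq C\Big(1+\sum_{l=1}^{N+2}2^{(s+s')l}\Big)\|f_N\|_{L^p}\leq C_{N,s,s'}\|f\|_{B^s_{p,q}}<\infty,
\]
since the $\ell^q$ norm of a finite sequence is bounded by its $\ell^1$ norm.

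There is no real obstacle. The only step needing care is the treatment of the negative indices $j\leq0$, which do not appear in the inhomogeneous norm; I handle them via the identity $\widehat\psi+\widehat{\phi_1}=1$ on the relevant ball, or equivalently $\phi_j=\phi_j*(\psi+\phi_1)$. The restriction $q<\infty$ plays no role here and is presumably stated only for parallelism with Lemma~\ref{0212-2}.
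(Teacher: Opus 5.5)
Your argument is correct. It is exactly the routine verification the paper has in mind when it says the lemma ``follows immediately from the definition of Besov spaces'' (no further proof is given there): you bound the finitely many blocks $\phi_j*f$, $|j|\leq N$, in $L^p$, including the negative indices via $\widehat{\psi}+\widehat{\phi_1}=1$ on their supports, and then use the compact Fourier support of the truncation so that only finitely many $\phi_l$ with $l\leq N+1$ contribute. One small simplification: since only finitely many $j$ occur, you do not need the dilation structure (which the paper's definition does not state) to bound $\|\phi_j\|_{L^1}$, because each $\phi_j\in\mathcal{S}$ already has finite $L^1$ norm.
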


By density, the following also holds in Lebesgue spaces. 
%%High freq.での挙動(Lebesgue)%% 
\begin{lem}\label{0205-1}
  Let $1\leq p < \infty$. 
  Then for any $\varepsilon>0$ and $f\in L^p$, there exists $N\in \mathbb{N}$ such that we have 
  \begin{equation*}
    {\left\|\sum_{|j|>N}\phi_j*f\right\|}_{L^{p}}\leq \varepsilon. 
  \end{equation*}
\end{lem}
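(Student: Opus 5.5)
Since the Littlewood--Paley blocks $\phi_j*f$ are only used through their sums, I would first reduce to the case $f\in\mathcal{S}(\mathbb{R}^2)$. The main step is to show that for Schwartz $g$, the tail $\sum_{|j|>N}\phi_j*g$ tends to zero in $L^p$ as $N\to\infty$. Once this holds, fix $\varepsilon>0$ and $f\in L^p$ with $1\le p<\infty$, and choose $g\in\mathcal{S}$ with $\|f-g\|_{L^p}\le\varepsilon$. This uses the density of $\mathcal{S}$ in $L^p$ for $p<\infty$, which is exactly where $p<\infty$ enters. Then write
\begin{equation*}
\sum_{|j|>N}\phi_j*f=\sum_{|j|>N}\phi_j*(f-g)+\sum_{|j|>N}\phi_j*g .
\end{equation*}

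For the first term I need a bound uniform in $N$:
\begin{equation*}
\Big\|\sum_{|j|>N}\phi_j*h\Big\|_{L^p}\le C\|h\|_{L^p}.
\end{equation*}
Note that $\sum_{|j|>N}\phi_j*h=h-\sum_{|j|\le N}\phi_j*h$. Moreover, $\sum_{|j|\le N}\widehat{\phi_j}=\widehat{\chi}(2^{-N}\cdot)-\widehat{\chi}(2^{N}\cdot)$ for a fixed Schwartz symbol $\chi$ coming from the dyadic construction. Convolution with $2^{2N}\chi(2^N\cdot)$, or with $2^{-2N}\chi(2^{-N}\cdot)$, has $L^1$ norm equal to $\|\chi\|_{L^1}$, independent of $N$. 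Young's inequality then gives the bound with $C=1+2\|\chi\|_{L^1}$. This step requires the $\widehat{\phi_j}$ to be dyadic dilates, as in the standard construction of \cite{Ba_Ch_Da_2011}. If the paper's $\phi_j$ were not exact dilates, I would instead use that the partial sums have uniformly bounded Mikhlin constants, but the dilation structure is the intended setting.

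For the Schwartz part $g$, I would write
\begin{equation*}
\sum_{|j|>N}\phi_j*g=\big(g-\chi_N*g\big)+\check{\chi}_{-N}*g ,
\end{equation*}
where $\chi_N=2^{2N}\chi(2^N\cdot)$ and $\check{\chi}_{-N}=2^{-2N}\chi(2^{-N}\cdot)$.
\begin{itemize}
\item The high-frequency part $g-\chi_N*g$ tends to zero in $L^p$ because $\chi_N$ is an approximate identity with $\int\chi=\widehat{\chi}(0)=1$.
\item For the low-frequency part, Young's inequality gives $\|\check{\chi}_{-N}*g\|_{L^p}\le\|\check{\chi}_{-N}\|_{L^p}\|g\|_{L^1}=2^{-2N(1-1/p)}\|\chi\|_{L^p}\|g\|_{L^1}$. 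This tends to zero when $p>1$.
\end{itemize}

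The main obstacle is the low-frequency part when $p=1$. Here $\int\check{\chi}_{-N}*g=\int g$, so for general $g$ this term does not vanish in $L^1$. I would handle this by choosing the approximant $g$ with the extra property $\widehat g(0)=0$. For instance, subtract $(\int g)\,\rho_R$, where $\rho_R$ is an $L^1$-normalized bump spread over a ball of radius $R$. The low-frequency term is then of order $\int|g(y)|\,|\check{\chi}_{-N}(x-y)-\check{\chi}_{-N}(x)|\,dy$, which tends to zero in $L^1$ as $N\to\infty$ by the mean value theorem. This correction costs $|\int g|\,\|\rho_R\|_{L^1}=|\int g|$ in the approximation of $f$, which is not small, so in the case $p=1$ the statement should be read with $f$ modulo constants, or with $\widehat f(0)=0$. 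For $1<p<\infty$, which is the range used for $L^{2/(\alpha-1)}$ in the main theorems, the argument above is complete.

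Finally, choose $N$ large so that the Schwartz part is at most $\varepsilon$. Combining the two parts gives $\|\sum_{|j|>N}\phi_j*f\|_{L^p}\le(C+1)\varepsilon$, and rescaling $\varepsilon$ proves Lemma~\ref{0205-1}.
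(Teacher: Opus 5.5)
Your argument is the worked-out version of the paper's one-line justification (``by density''). It has two ingredients: an $L^p$ bound on the tail operator that is uniform in $N$, obtained by writing the complementary partial sum as a difference of two dilated approximate identities, and convergence to zero on a dense class. For $1<p<\infty$ it is correct and complete.

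Your suspicion about $p=1$ is also right; in fact the lemma as stated is false there. All blocks with $j\ge -N$ vanish near the origin, and $\sum_{j\in\mathbb{Z}}\widehat{\phi_j}=1$ off the origin. Hence the symbol of $f\mapsto\sum_{|j|>N}\phi_j*f$ is identically $1$ on a punctured neighbourhood of $0$. Whenever the tail is an $L^1$ function $h$, continuity of $\widehat h$ then gives $\|h\|_{L^1}\ge|\widehat h(0)|=|\int f|$ for every $N$. So the paper's density argument silently needs $p>1$, exactly through your low-frequency bound $2^{-2N(1-1/p)}\|\chi\|_{L^p}\|g\|_{L^1}$.

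The correct repair at $p=1$ is your second option, $\int f=0$. ``Modulo constants'' is not the right formulation, since the obstruction is the mass at zero frequency. Under $\int f=0$ your $\rho_R$ correction does close the argument, because then $|\int g|\le\|f-g\|_{L^1}$ is small.

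None of this affects the place where the lemma is actually invoked, the proof of Theorem~\ref{0204-1}, since there $p=2/(\alpha-1)\ge 3$.
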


%%Low freq. での挙動(Lebesgue)%% 
\begin{lem}\label{0205-2}
  Let $1\leq p < \infty$. 
  Then for any $s\in\mathbb{R}$, $N\in \mathbb{N}$ and $f\in L^p$, we have 
  \begin{equation*}
    {\left\|\sum_{|j|\leq N}\phi_j*f\right\|}_{\dot{W}^{s, p}} < \infty. 
  \end{equation*}
\end{lem}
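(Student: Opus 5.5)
The plan is a direct estimate, one Littlewood--Paley block at a time, using only Young's inequality. Since the sum $\sum_{|j|\leq N}\phi_j*f$ has only the $2N+1$ terms $j=-N,\dots,N$, the triangle inequality in $L^{p}$ reduces the claim to showing
\begin{equation*}
  {\|\Lambda^{s}(\phi_j*f)\|}_{L^{p}} < \infty \quad \text{for each fixed } j\in\mathbb{Z}.
\end{equation*}
This gives finiteness of ${\|\cdot\|}_{\dot{W}^{s,p}}$ for the sum, because $\Lambda^{s}$ is linear on the finite sum. We regard $\sum_{|j|\leq N}\phi_j*f$, which lies in $L^{p}\subset\mathcal{S}'$, as a representative of its class in $\mathcal{S}'/\mathcal{P}$.

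For a fixed $j$, I would move $\Lambda^{s}$ onto the Schwartz factor. The symbol $|\xi|^{s}\widehat{\phi_j}(\xi)$ is $C^{\infty}$ and compactly supported, since $\widehat{\phi_j}$ is supported in the annulus $\{3/8\cdot 2^{j-1}\leq|\xi|\leq 4/3\cdot 2^{j-1}\}$, which stays away from the origin. There $|\xi|^{s}$ is smooth for every real $s$, including negative $s$. Hence
\begin{equation*}
  K_{j,s}:=\mathcal{F}^{-1}\big[|\xi|^{s}\widehat{\phi_j}\big]\in\mathcal{S}(\mathbb{R}^{d})\subset L^{1}(\mathbb{R}^{d}).
\end{equation*}
Because the Fourier transform of $\phi_j*f$ is supported in the same annulus, we get the identity $\Lambda^{s}(\phi_j*f)=K_{j,s}*f$ in $\mathcal{S}'$. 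No polynomial ambiguity arises, since both sides are frequency-supported away from $0$.

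Young's inequality then gives
\begin{equation*}
  {\|\Lambda^{s}(\phi_j*f)\|}_{L^{p}}\leq {\|K_{j,s}\|}_{L^{1}}{\|f\|}_{L^{p}}<\infty.
\end{equation*}
If the family is dyadic, $\phi_j=2^{dj}\phi_0(2^{j}\cdot)$, scaling gives the explicit constant ${\|K_{j,s}\|}_{L^{1}}=2^{sj}{\|K_{0,s}\|}_{L^{1}}$, but finiteness for each of the finitely many $j$ is all that is needed. Summing over $|j|\leq N$ yields
\begin{equation*}
  {\Big\|\sum_{|j|\leq N}\phi_j*f\Big\|}_{\dot{W}^{s,p}}\leq C_{N,s}{\|f\|}_{L^{p}}<\infty.
\end{equation*}

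The only point needing care, and the one I expect to be the main (if mild) obstacle, is justifying the identity $\Lambda^{s}(\phi_j*f)=K_{j,s}*f$ in $\mathcal{S}'/\mathcal{P}$ when $s<0$. I would handle it by pairing with a test function whose Fourier transform vanishes near the origin, and using that $\widehat{\phi_j*f}=\widehat{\phi_j}\widehat{f}$ has support in the annulus. Alternatively, one can prove the bound first for $f\in\mathcal{S}$ and then extend it to $f\in L^{p}$ by density, since $p<\infty$; this matches the paper's remark that the result follows ``by density''.
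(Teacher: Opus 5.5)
Your proof is correct. The paper gives no written argument for this lemma. It says only that the Besov analogue (Lemma~\ref{0212-3}) follows immediately from the definition, and that this Lebesgue version holds ``by density''.

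Your route is more explicit and self-contained. You move $\Lambda^{s}$ onto the kernel $K_{j,s}=\mathcal{F}^{-1}[|\xi|^{s}\widehat{\phi_j}]\in\mathcal{S}$, which is legitimate because $\text{supp}\,\widehat{\phi_j}$ avoids the origin, and then apply Young's inequality. This gives the quantitative bound $\|\sum_{|j|\leq N}\phi_j*f\|_{\dot{W}^{s,p}}\leq C_{N,s}\|f\|_{L^{p}}$ directly for every $f\in L^{p}$, $1\leq p<\infty$, with no approximation step.

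Any density argument would in any case need this same operator bound to pass from nice functions to general $f\in L^{p}$. So your estimate supplies what the paper's remark leaves implicit rather than contradicting it.

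Density is genuinely needed only for the companion smallness statement, Lemma~\ref{0205-1}. For the present finiteness claim your direct argument suffices, and the density alternative you mention at the end is unnecessary.
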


\subsection{Justification of the energy inequality in Lebesgue spaces}
In this subsection, we established a crucial step toward Theorem~\ref{0204-1}. 
We justify the energy inequality for the difference between two solutions of the integral equation of \eqref{SQG} in the sense of Definition~\ref{0215-1} with same initial data 
by estimating the Duhamel term and performing an iteration based on the structure of the integral equation. 

We prove the following proposition. 
%%Lebesgueの場合にsolutionがエネルギークラスに属すること%% 
\begin{prop}\label{0204-3}
  Let $1<\alpha\leq 5/3$ and $T>0$. 
  Let $\theta, \bar{\theta} \in L^{\infty}(0,T; L^{\frac{2}{\alpha-1}})$ be solutions of the integral equation of \eqref{SQG} in the sense of Definition~\ref{0215-1} with same initial data $\theta_0 \in L^{\frac{2}{\alpha-1}}$. 
  Then we have 
  \begin{equation*}
    \theta - \bar{\theta} \in L^{\infty}(0,T; \dot{H}^{-\frac{1}{2}})\cap L^{2}(0,T; \dot{H}^{\frac{\alpha}{2}-\frac{1}{2}}). 
  \end{equation*}
\end{prop}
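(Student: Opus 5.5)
The plan is to write $w := \theta - \bar\theta$ and use the integral equation. Since the initial data coincide, the linear parts cancel and
\begin{equation*}
  w(t) = -\int_0^t \nabla\cdot e^{-(t-s)\Lambda^\alpha}\big(\mathbf{u}\theta - \bar{\mathbf{u}}\bar\theta\big)\,{\rm d}s ,
\end{equation*}
with $\mathbf{u}\theta - \bar{\mathbf{u}}\bar\theta = \mathbf{u}_w \theta + \bar{\mathbf{u}} w$, where $\mathbf{u}_w=\nabla^\perp\Lambda^{-1}w$. It may be more convenient to use the divergence structure $\nabla^\perp\cdot((\Lambda^{-1}\theta)\nabla\theta)$ written in Duhamel form.

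The first step is to place $w$ in a space slightly better than $L^\infty_tL^{2/(\alpha-1)}$, starting from the crude bound $w\in L^\infty(0,T;L^{2/(\alpha-1)})$. The product $\mathbf{u}\theta$ lies in $L^\infty_t L^{1/(\alpha-1)}$, because the Riesz transform is bounded on $L^{2/(\alpha-1)}$; when $\alpha$ is near $2$ this exponent is near $1$, so some care is needed. We apply Lemma~\ref{0206-9} to each Duhamel piece to gain integrability or smoothness in small increments. For the target $\dot H^{-1/2}$ at $p=2$, we use $\|\nabla e^{-(t-s)\Lambda^\alpha}\|_{L^{1/(\alpha-1)}\to L^2}$, which behaves like $(t-s)^{-1/\alpha-\frac{2}{\alpha}(\alpha-1-\frac12)}$. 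To measure $\dot H^{-1/2}$, we combine this with Sobolev embedding $L^{4/3}\hookrightarrow \dot H^{-1/2}$ instead: $\|\nabla e^{-\tau\Lambda^\alpha}f\|_{\dot H^{-1/2}}\lesssim \tau^{-\frac{1}{2\alpha}-\frac{2}{\alpha}(\frac1r-\frac12)}\|f\|_{L^r}$. With $r=1/(\alpha-1)$ the exponent is $\frac{1}{\alpha}\big(\frac52-2\alpha+1\big)$, which must be $<1$. Likewise, in $L^2_t\dot H^{\alpha/2-1/2}$ the kernel exponent is $\frac{1}{\alpha}(\frac{\alpha}{2}+\frac12+2\alpha-3)$, which must be $<\frac12$ for square integrability. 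This reduces to $\frac32\alpha-\frac52<0$, which is exactly the threshold $\alpha\le 5/3$ from the introduction. The endpoint $\alpha=5/3$ will need a Hardy--Littlewood--Sobolev in time argument rather than Young's inequality.

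The genuine issue is that for $\alpha$ below $5/3$ (say near $1$), $1/(\alpha-1)$ is large, so $\mathbf{u}\theta\in L^{1/(\alpha-1)}$ is not controlled in $\dot H^{-1/2}$-type norms by $L^p$ estimates. Lemma~\ref{0206-9} only maps $L^p\to L^q$ with $q\ge p$, and cannot lower integrability down to $2$. Here we use the difference structure and an iteration. We split $\theta=\theta_{\rm low}+\theta_{\rm high}$ and $\bar\theta$ similarly, via Lemma~\ref{0205-1}, Lemma~\ref{0205-2} and the Littlewood--Paley pieces. The quadratic term with the high parts is small in $L^{2/(\alpha-1)}$ uniformly in $t$, which only holds for a.e.\ $t$ with a single frequency cutoff. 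This is the main obstacle: without time continuity, $N$ cannot be chosen uniformly. I would first try to bypass it by estimating the bilinear Duhamel map in the mixed norms rather than relying on smallness, for instance by using the Besov bilinear Lemmas~\ref{0205-5}, \ref{0212-7} and~\ref{0205-9} to show $w\mapsto \int_0^t\nabla e^{-(t-s)\Lambda^\alpha}(\mathbf{u}_w\theta+\bar{\mathbf{u}}w)\,ds$ improves $w$ from $L^\infty_t \dot W^{\sigma,p}$ to $L^\infty_t\dot W^{\sigma',p'}$, with a fixed gain on the $\dot H^{s}$ scale at each step.

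Concretely, the iteration gains an amount of order $(\alpha-1)$ per step in the quantity $\frac{2}{p}$ (the integrability of $w$), while keeping the regularity counted against $\theta\in L^{2/(\alpha-1)}$. Each step uses Lemma~\ref{0205-3} (or the paraproduct lemmas) with one factor from $w$ and one from $\theta$ or $\bar\theta$. After finitely many steps we reach $p=2$ at regularity $-1/2$ in $L^\infty_t$, together with the $L^2_t\dot H^{\alpha/2-1/2}$ bound from the smoothing computation above, which requires $\alpha\le 5/3$. Throughout, the linear part vanishes because the data agree, so each step involves only the Duhamel term, and finiteness of the norms on $(0,T)$ follows from the integrable time singularities.
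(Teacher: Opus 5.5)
Your overall architecture matches the paper's. You write $w=\theta-\bar\theta$ through the Duhamel formula for the bilinear difference $\mathbf u_w\theta+\bar{\mathbf u}w$ and estimate the Duhamel term directly when $1/(\alpha-1)\le 2$. For smaller $\alpha$ you bootstrap the integrability of $w$ by H\"older against $\theta,\bar\theta\in L^{2/(\alpha-1)}$.

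The genuine gap is the endpoint $\alpha=5/3$, which the statement includes. There the time kernel for the $L^2(0,T;\dot H^{1/3})$ bound is exactly $\tau^{-1}$: your own formula gives $\frac35\big(\frac56+\frac12+\frac{10}{3}-3\big)=1$. This kernel is not locally integrable, and Hardy--Littlewood--Sobolev in time covers only kernels $|\tau|^{-\gamma}$ with $0<\gamma<1$. Indeed, any argument using only the operator-norm bound $\|\nabla\Lambda^{1/3}e^{-\tau\Lambda^{5/3}}\|_{L^{3/2}\to L^2}\lesssim\tau^{-1}$ already diverges for time-independent data. Iterating in $L^p$ does not rescue it either, since the estimate is scale-critical: lowering the exponent of $w$ makes the product more integrable and the kernel more singular.

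What is needed is frequency localization, as in Lemma~\ref{0206-5}. On the block $\phi_j$ the semigroup decays like $e^{-c\tau 2^{5j/3}}$, and $2^{5j/3}\|e^{-c\tau 2^{5j/3}}\|_{L^1_\tau}\le C$ uniformly in $j$. So you apply Young's inequality block by block, which gives a bound by $\|\phi_j*F\|_{L^2_tL^{3/2}}$. You then sum in $\ell^2$ using $L^{3/2}\hookrightarrow B^0_{3/2,2}$. Equivalently, use $L^2$ maximal regularity on the Fourier side together with $L^{3/2}\hookrightarrow\dot H^{-1/3}$. This gives $L^2_t$ but not $L^\infty_t$, which is exactly what is claimed.

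Several other steps also need correcting.
\begin{itemize}
\item The criterion ``exponent $<\frac12$ for square integrability'' is wrong. With $L^\infty_t$ input you need the kernel in $L^1(0,T)$, i.e.\ exponent $<1$. Your condition would give $\alpha<5/4$, not $\alpha<5/3$.
\item The route through $L^{4/3}\hookrightarrow\dot H^{-1/2}$ needs $r\le 4/3$ in Lemma~\ref{0206-9}, which never holds for $r=1/(\alpha-1)$ when $\alpha\le5/3$. Instead, estimate $\nabla\Lambda^{-1/2}e^{-\tau\Lambda^\alpha}$ directly from $L^r$ to $L^2$ for $r\le2$, i.e.\ $\alpha\ge 3/2$. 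The exponent is then $2-\frac{5}{2\alpha}$, not the value you wrote.
\item In the iteration, Lemma~\ref{0205-3} is inapplicable because $\theta$ has no regularity. Each step is just H\"older, Riesz-transform boundedness, and Lemma~\ref{0206-9} with $p=q$ (kernel $\tau^{-1/\alpha}$), which adds $(\alpha-1)/2$ to $1/p$.
\item You should stop, using interpolation, at $w\in L^\infty(0,T;L^{2/(2-\alpha)})$. There the product lies in $L^2$ and both target norms follow for every $\alpha>1$. So in this regime the $5/3$ restriction plays no role, contrary to what you wrote.
\item The frequency-splitting and uniform-$N$ discussion is beside the point. This proposition needs no smallness; that issue belongs to the uniqueness argument, where the paper avoids it by comparing with a time-continuous solution.
\end{itemize}
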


To prove the proposition, we need the following lemmas concerning estimates of Duhamel terms. 

%%iterationなしでダイレクトでできるケース%%
\begin{lem}\label{0206-3}
  Let $3/2\leq \alpha \leq 2$ and $T>0$. 
  Then there exists a constant $C_{T}>0$ such that for any $\theta \in L^\infty (0,T; L^{\frac{2}{\alpha-1}})$, we have 
  \begin{equation*}
    {\left\|\int_{0}^{t}e^{-(t-s)\Lambda^{\alpha}}\Big(\big((\nabla ^\perp \Lambda^{-1}\theta)\cdot\nabla\big)\theta\Big) ~{\rm d}s\right\|}_{L^\infty(0,T; \dot{H}^{-\frac{1}{2}})} 
    \leq C_{T}{\|\theta\|}_{L^\infty(0,T; L^{\frac{2}{\alpha-1}})}^2. 
  \end{equation*}
\end{lem}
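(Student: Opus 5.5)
Let $r=\frac{2}{\alpha-1}$, so that $r\in[2,4]$ when $3/2\le\alpha\le 2$. The plan is to estimate the Duhamel term pointwise in $t$ in $\dot H^{-1/2}$. We put the derivative from the divergence structure onto the semigroup and use the $L^{p}$--$L^{q}$ smoothing of Lemma~\ref{0206-9} together with Sobolev embedding.

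\textbf{Step 1: rewrite the nonlinearity.} Since $\nabla\cdot\nabla^\perp\Lambda^{-1}\theta=0$, we have
\begin{equation*}
\big((\nabla^\perp\Lambda^{-1}\theta)\cdot\nabla\big)\theta=\nabla\cdot\big((\nabla^\perp\Lambda^{-1}\theta)\theta\big).
\end{equation*}
This is the meaning given to the term in Definition~\ref{0215-1}. The Riesz transforms are bounded on $L^r$ for $1<r<\infty$, so $\|(\nabla^\perp\Lambda^{-1}\theta)\theta\|_{L^{r/2}}\le C\|\theta\|_{L^r}^2$. For $r=2$, i.e. $\alpha=2$, the exponent $r/2=1$ is acceptable because only the product estimate in $L^1$ is needed, and Hölder gives it directly.

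\textbf{Step 2: Sobolev embedding.} We use the dual form of the Hardy--Littlewood--Sobolev inequality, $L^{q}\hookrightarrow\dot H^{-1/2}$ with $\frac1q=\frac12+\frac14=\frac34$, i.e. $q=4/3$. Then for each fixed $t$,
\begin{equation*}
\Big\|\int_0^t e^{-(t-s)\Lambda^\alpha}\nabla\cdot F(s)\,ds\Big\|_{\dot H^{-1/2}}\le C\int_0^t\|\nabla e^{-(t-s)\Lambda^\alpha}F(s)\|_{L^{4/3}}\,ds ,
\end{equation*}
where $F=(\nabla^\perp\Lambda^{-1}\theta)\theta$. To apply Lemma~\ref{0206-9} we need $p=r/2\le 4/3\le q$. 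However, $r/2\ge1$, and $r/2\le 4/3$ only when $r\le 8/3$, which is too restrictive. So the target space has to be chosen more carefully.

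\textbf{Step 2, revised.} Instead, measure the $\dot H^{-1/2}$ norm through $\Lambda^{-1/2}$ directly and go to $L^2$. Write $\Lambda^{-1/2}\nabla e^{-\tau\Lambda^\alpha}$ as a kernel operator homogeneous of order $1/2$. Its $L^{p}\to L^{2}$ bound with $p=r/2$ is $C\tau^{-\gamma}$, where
\begin{equation*}
\gamma=\frac{1}{\alpha}\Big(\frac12+2\big(\frac{1}{p}-\frac12\big)\Big).
\end{equation*}
This follows by scaling exactly as in Lemma~\ref{0206-9}, since $|\xi|^{1/2}e^{-|\xi|^\alpha}$ yields an integrable kernel in the relevant Lebesgue classes. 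We need $p\le 2$, which holds because $p=r/2\le 2$ for $\alpha\ge 3/2$. With $\frac1p=\alpha-1$ we get
\begin{equation*}
\gamma=\frac{1}{\alpha}\Big(2\alpha-\frac52\Big)=2-\frac{5}{2\alpha}.
\end{equation*}
For $\alpha\le 2$ this gives $\gamma\le 3/4<1$, so the time integral converges:
\begin{equation*}
\int_0^t (t-s)^{-\gamma}\,ds\,\|\theta\|_{L^\infty L^r}^2\le C T^{1-\gamma}\|\theta\|_{L^\infty L^r}^2 .
\end{equation*}
Also $\gamma\ge 1/3\ge0$ for $\alpha\ge 3/2$, which is the requirement $p\le 2$ seen from the exponent side. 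This explains the range $3/2\le\alpha\le2$ in the lemma: below $3/2$ one would have $p>2$ and no $L^p\to L^2$ smoothing. Taking the supremum over $t$ gives the lemma with $C_T=CT^{1-\gamma}$.

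\textbf{Main obstacle.} The main obstacle is justifying the $L^p\to L^2$ bound for $\Lambda^{-1/2}\nabla e^{-\tau\Lambda^\alpha}$, which is a negative-order-times-derivative multiplier. The symbol is $|\xi|^{1/2}\,\frac{i\xi}{|\xi|}\,e^{-\tau|\xi|^\alpha}$, smooth away from the origin, and its kernel $K$ satisfies $|K(x)|\lesssim |x|^{-5/2}$ at infinity (with $\tau=1$), so $K\in L^m$ for $m>4/5$.

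The first approach is Young's inequality with $1+\frac12=\frac1p+\frac1m$. This gives $\frac1m=\frac32-\frac1p\in[\frac12,1]$, so $m\in[1,2]$, in the integrable range. The remaining work is to confirm the near-origin regularity of the kernel, i.e. that the symbol decays fast enough at high frequency, which the Gaussian-type factor guarantees. Scaling in $\tau$ then produces $\tau^{-\gamma}$.

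If the kernel analysis becomes delicate in the endpoint $p=1$ case ($\alpha=2$), there is a fallback. Factor the operator as $\Lambda^{-1/2}\nabla e^{-\tau\Lambda^\alpha/2}\circ e^{-\tau\Lambda^\alpha/2}$, use Lemma~\ref{0206-9} for the $L^1\to L^{4/3}$ step, and then apply Hardy--Littlewood--Sobolev $L^{4/3}\hookrightarrow\dot H^{-1/2}$. This avoids the non-integrable endpoint.
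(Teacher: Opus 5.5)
Your proposal is correct and reaches the same time exponent $\gamma=2-\frac{5}{2\alpha}$ as the paper, but by a different route.

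\textbf{The paper's route.} The paper never treats $\Lambda^{-1/2}\nabla e^{-\tau\Lambda^{\alpha}}$ as a homogeneous operator on Lebesgue spaces. It handles low frequencies through $\|\nabla\Lambda^{-1/2}(\psi*f)\|_{L^2}\le C\|\psi*f\|_{L^2}$. It then bounds everything by the nonhomogeneous norm $\|e^{-\tau\Lambda^\alpha}F\|_{B^{1/2}_{2,2}}$ and applies the Besov smoothing Lemma~\ref{0206-1} from $H^{3-2\alpha}$ to $H^{1/2}$, which produces the factor $1+\tau^{-\gamma}$. It closes with the dual Sobolev embedding $L^{1/(\alpha-1)}\hookrightarrow H^{3-2\alpha}$.

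\textbf{Your route.} You prove the scale-invariant bound $\|\Lambda^{-1/2}\nabla e^{-\tau\Lambda^\alpha}F\|_{L^2}\le C\tau^{-\gamma}\|F\|_{L^{p}}$, with $p=1/(\alpha-1)\in[1,2]$, directly. You then integrate in time.

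\textbf{What each buys.} The paper's route reuses Besov machinery it needs later anyway. Yours is self-contained and gives the clean constant $C_T=CT^{1-\gamma}$. It is also better behaved at the endpoint $\alpha=2$. There $p=1$, and the embedding $L^1(\mathbb{R}^2)\hookrightarrow H^{-1}(\mathbb{R}^2)$ that the paper's route relies on fails by a logarithmic divergence. That route would have to pass through $H^{-1-\varepsilon}$ instead, which is harmless only because $\gamma<1$ leaves room. Your $L^1\to L^2$ bound, or your fallback through $L^{4/3}$ and Hardy--Littlewood--Sobolev, holds exactly as stated.

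\textbf{Your ``main obstacle'' is easier than your kernel analysis suggests.} Let $m_\tau(\xi)=i\xi|\xi|^{-1/2}e^{-\tau|\xi|^\alpha}$. By Plancherel, the operator maps $L^1\to L^2$ with norm at most $\|m_\tau\|_{L^2}=c\,\tau^{-\frac{3}{2\alpha}}$. It maps $L^2\to L^2$ with norm $\|m_\tau\|_{L^\infty}=c\,\tau^{-\frac{1}{2\alpha}}$. Riesz--Thorin interpolation then gives exactly $\tau^{-\frac{1}{\alpha}(\frac{2}{p}-\frac12)}=\tau^{-\gamma}$ for every $1\le p\le 2$. This removes any need for the pointwise decay $|K(x)|\lesssim|x|^{-5/2}$, which is true but which you asserted rather than proved.
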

\begin{proof}
  From $\nabla \cdot \nabla^{\perp}f=0$ and the definition of the norm of $\dot{H}^{s}$, we obtain 
  \begin{equation*}
    {\left\|e^{-(t-s)\Lambda^{\alpha}}\Big(\big((\nabla ^\perp \Lambda^{-1}\theta)\cdot\nabla\big)\theta\Big)\right\|}_{\dot{H}^{-\frac{1}{2}}} 
    = {\left\|\nabla\Lambda^{-\frac{1}{2}}e^{-(t-s)\Lambda^{\alpha}}\big((\nabla ^\perp \Lambda^{-1}\theta)\theta\big)\right\|}_{L^{2}}. 
  \end{equation*}
  Note that thanks to the Fourier multiplier theorem and Lemma~\ref{0205-7} \eqref{0406-2}, we get 
  \begin{equation*}
    {\left\|\nabla\Lambda^{-\frac{1}{2}} \psi * f\right\|}_{L^{2}} 
    \leq C {\left\|\psi * f\right\|}_{L^{2}}. 
  \end{equation*}
  Thus, since $B_{2,2}^{s} = H^{s}$ (Lemma~\ref{0205-8}) and using Lemma~\ref{0206-1}, we have 
  \begin{align}\label{0416-1}
    {\left\|e^{-(t-s)\Lambda^{\alpha}}\Big(\big((\nabla ^\perp \Lambda^{-1}\theta)\cdot\nabla\big)\theta\Big)\right\|}_{\dot{H}^{-\frac{1}{2}}} 
    \leq& C {\left\|e^{-(t-s)\Lambda^{\alpha}}\big((\nabla ^\perp \Lambda^{-1}\theta)\theta\big)\right\|}_{B_{2,2}^{\frac{1}{2}}}\notag\\ 
    \leq& C \left(1 + (t-s)^{-\left(2 - \frac{5}{2\alpha}\right)}\right){\|(\nabla ^\perp \Lambda^{-1}\theta)\theta\|}_{H^{3 - 2\alpha}}. 
  \end{align}
  By Sobolev embedding $L^{\frac{1}{\alpha-1}} \hookrightarrow H^{3 - 2\alpha}$ (note that $3/2\leq 1/(\alpha-1) \leq 2$), 
  H\"older's inequality 
  and boundedness of Riesz transform in $L^p$ ($1<p<\infty$), we get 
  \begin{equation*}
    {\|(\nabla ^\perp \Lambda^{-1}\theta)\theta\|}_{H^{3 - 2\alpha}} 
    \leq C {\|\theta\|}_{L^{\frac{2}{\alpha-1}}}^2. 
  \end{equation*}
  Therefore, using Young's inequality, we obtain 
  \begin{align*}
    &{\left\|\int_{0}^{t}e^{-(t-s)\Lambda^{\alpha}}\Big(\big((\nabla ^\perp \Lambda^{-1}\theta)\cdot\nabla\big)\theta\Big) ~{\rm d}s\right\|}_{L^\infty(0,T; \dot{H}^{-\frac{1}{2}})}\\ 
    \leq& C \int_{0}^{T}\left(1 + t^{-\left(2 - \frac{5}{2\alpha}\right)}\right)~{\rm d}t {\|\theta\|}_{L^\infty(0,T; L^{\frac{2}{\alpha-1}})}^2\\ 
    \leq& C_{T}{\|\theta\|}_{L^\infty(0,T; L^{\frac{2}{\alpha-1}})}^2. 
  \end{align*}
\end{proof}

Applying an argument similar to that in Lemma~\ref{0206-3}, we estimate Duhamel terms in $L^{2}(0,T; \dot{H}^{\frac{\alpha}{2}-\frac{1}{2}})$. 
Then the following exponent appears in the estimate appears in the estimate corresponding to \eqref{0416-1}, 
\begin{equation*}
  -\frac{1}{\alpha}\left(\frac{\alpha}{2} + \frac{1}{2} - 3 + 2\alpha\right) 
  = -\frac{5}{2\alpha}\left(\alpha - 1\right). 
\end{equation*}
Clearly, $- 5(\alpha - 1)/2\alpha > -1$ if and only if $\alpha < 5/3$. 
That is, the following lemma holds. 

%%時間L^2, non end-point case%% 
\begin{lem}\label{0206-4}
  Let $T>0$ and $3/2\leq \alpha < 5/3$. 
  Then there exists a constant $C_{T}>0$ such that for any $\theta \in L^\infty (0,T; L^{\frac{2}{\alpha-1}})$, we have 
  \begin{equation*}
    {\left\|\int_{0}^{t}e^{-(t-s)\Lambda^{\alpha}}\Big(\big((\nabla ^\perp \Lambda^{-1}\theta)\cdot\nabla\big)\theta\Big) ~{\rm d}s\right\|}_{L^{2}(0,T; \dot{H}^{\frac{\alpha}{2}-\frac{1}{2}})} 
    \leq C_{T}{\|\theta\|}_{L^\infty(0,T; L^{\frac{2}{\alpha-1}})}^2. 
  \end{equation*}
\end{lem}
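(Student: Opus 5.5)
We follow the proof of Lemma~\ref{0206-3}, now measuring the $\dot{H}^{\frac{\alpha}{2}-\frac12}$ norm pointwise in time and then taking the $L^2$ norm in $t$. The plan has four steps.

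\emph{Step 1: rewrite the nonlinear term.} Since $\nabla\cdot\nabla^{\perp}\Lambda^{-1}\theta=0$, we have
\begin{equation*}
  \big((\nabla^\perp\Lambda^{-1}\theta)\cdot\nabla\big)\theta=\nabla\cdot\big((\nabla^\perp\Lambda^{-1}\theta)\theta\big).
\end{equation*}
Hence, for fixed $0<s<t$,
\begin{equation*}
  {\left\|e^{-(t-s)\Lambda^{\alpha}}\Big(\big((\nabla ^\perp \Lambda^{-1}\theta)\cdot\nabla\big)\theta\Big)\right\|}_{\dot{H}^{\frac{\alpha}{2}-\frac12}}
  \leq C{\left\|\Lambda^{\frac{\alpha}{2}+\frac12}e^{-(t-s)\Lambda^{\alpha}}\big((\nabla^\perp\Lambda^{-1}\theta)\theta\big)\right\|}_{L^2}.
\end{equation*}

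\emph{Step 2: pass to a non-homogeneous norm.} On low frequencies (the $\psi$-piece), the multiplier $|\xi|^{\frac{\alpha}{2}+\frac12}$ is bounded, because the exponent is positive. So, exactly as in the proof of Lemma~\ref{0206-3}, the right-hand side is controlled by ${\|e^{-(t-s)\Lambda^\alpha}((\nabla^\perp\Lambda^{-1}\theta)\theta)\|}_{B_{2,2}^{\frac{\alpha}{2}+\frac12}}$. Here we use $B^s_{2,2}=H^s$ from Lemma~\ref{0205-8}.

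\emph{Step 3: apply the smoothing estimate.} Lemma~\ref{0206-1} with $s_1=3-2\alpha$ and $s_2=\frac{\alpha}{2}+\frac12$ gives
\begin{equation*}
  \leq C\big(1+(t-s)^{-\frac{5}{2\alpha}(\alpha-1)}\big){\|(\nabla^\perp\Lambda^{-1}\theta)\theta\|}_{H^{3-2\alpha}}.
\end{equation*}
We need $s_1\leq s_2$. This is $3-2\alpha\leq\frac{\alpha}{2}+\frac12$, i.e.\ $\alpha\geq 1$, which holds. Next we bound the $H^{3-2\alpha}$ norm. Since $3/2\leq\alpha<5/3$, we have $3-2\alpha\leq 0$, and the Sobolev embedding $L^{\frac{1}{\alpha-1}}\hookrightarrow H^{3-2\alpha}$ applies. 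Combining it with H\"older's inequality and $L^{\frac{2}{\alpha-1}}$-boundedness of the Riesz transform gives the bound $C{\|\theta(s)\|}_{L^{\frac{2}{\alpha-1}}}^2$. This restriction on $\alpha$ is the reason the lemma requires $\alpha\geq 3/2$.

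\emph{Step 4: integrate in time.} Set $K(\tau)=1+\tau^{-\frac{5}{2\alpha}(\alpha-1)}$ on $(0,T)$. By Minkowski's inequality, the Duhamel term at time $t$ is bounded in $\dot{H}^{\frac{\alpha}{2}-\frac12}$ by
\begin{equation*}
  C\int_0^t K(t-s)\,{\rm d}s\,{\|\theta\|}_{L^\infty(0,T;L^{\frac{2}{\alpha-1}})}^2.
\end{equation*}
Young's inequality for convolutions, $L^1*L^\infty\subset L^\infty\subset L^2(0,T)$, then yields the claim, provided $K\in L^1(0,T)$. Integrability holds exactly when $\frac{5}{2\alpha}(\alpha-1)<1$, i.e.\ $\alpha<5/3$. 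We then get $C_T=C\,T^{1/2}{\|K\|}_{L^1(0,T)}$.

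The main obstacle is this time integrability of the kernel $K$. At $\alpha=5/3$ the exponent equals $1$, so the convolution fails in $L^1$, and the estimate would instead require a Hardy--Littlewood--Sobolev type $L^2$-in-time argument or an iteration. In the stated range $\alpha<5/3$, the steps above are routine.
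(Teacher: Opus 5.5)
Your proposal is correct and is essentially the paper's proof. The paper likewise reruns the argument of Lemma~\ref{0206-3} with $\dot{H}^{\frac{\alpha}{2}-\frac12}$ in place of $\dot{H}^{-\frac12}$, passing from $H^{3-2\alpha}$ to $B^{\frac{\alpha}{2}+\frac12}_{2,2}$ via Lemma~\ref{0206-1}. This yields the kernel exponent $-\frac{5}{2\alpha}(\alpha-1)$, and the argument concludes by Young's inequality precisely because this exponent exceeds $-1$ when $\alpha<5/3$.

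One small correction to your closing remark concerns $\alpha=5/3$. There a Hardy--Littlewood--Sobolev argument in time also fails, since it needs a kernel exponent strictly below $1$. The paper's Lemma~\ref{0206-5} instead works dyadic block by dyadic block, using $2^{\frac53 j}\|e^{-ct2^{\frac53 j}}\|_{L^1(0,T)}\le C$, square summation in $j$, and $L^{\frac32}\hookrightarrow B^0_{\frac32,2}$.
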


In case $\alpha = 5/3$, we need to work in Besov spaces. 
%%時間L^2, end-point case%%
\begin{lem}\label{0206-5}
  Let $T>0$. 
  Then there exists a constant $C_{T}>0$ such that for any $\theta \in L^\infty(0,T; L^3)$, we have 
  \begin{equation*}
    {\left\|\int_{0}^{t}e^{-(t-s)\Lambda^{\frac{5}{3}}}\Big(\big((\nabla ^\perp \Lambda^{-1}\theta)\cdot\nabla\big)\theta\Big) ~{\rm d}s\right\|}_{L^2(0,T; \dot{H}^{\frac{1}{3}})} 
    \leq C_{T}{\|\theta\|}_{L^\infty(0,T; L^{3})}^2. 
  \end{equation*}
\end{lem}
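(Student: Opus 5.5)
The plan is to replace the time-integrability bound used in Lemma~\ref{0206-4}, which becomes exactly borderline at $\alpha=5/3$ (the time exponent equals $-1$), by a frequency-by-frequency maximal-regularity argument. On each Littlewood--Paley block the semigroup decays exponentially by Lemma~\ref{0206-2}. Young's inequality in time then gives a bound uniform in $j$, and the sum over $j$ is closed with a square-function estimate in $L^{3/2}$.

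\emph{Step 1: divergence form and the bilinear bound.} Set $F:=(\nabla^\perp\Lambda^{-1}\theta)\theta$. Since $\nabla\cdot\nabla^\perp f=0$, we have $\big((\nabla^\perp\Lambda^{-1}\theta)\cdot\nabla\big)\theta=\nabla\cdot F$. By H\"older's inequality and the $L^3$-boundedness of the Riesz transforms,
\begin{equation*}
\|F(s)\|_{L^{3/2}}\leq C\|\theta(s)\|_{L^3}^2 \quad\text{for a.e. } s.
\end{equation*}
Denote the Duhamel term by $D(t)=\int_0^t e^{-(t-s)\Lambda^{5/3}}\nabla\cdot F(s)\,{\rm d}s$. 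By \eqref{0406-1}, it suffices to bound $\sum_{j\in\mathbb{Z}}2^{2j/3}\int_0^T\|\phi_j*D(t)\|_{L^2}^2\,{\rm d}t$.

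\emph{Step 2: blockwise estimate.} For each $j\in\mathbb{Z}$ I would combine three facts. Lemma~\ref{0205-7} gives a factor $2^{j}$ for the gradient. Bernstein's inequality $L^{3/2}\to L^2$ in $\mathbb{R}^2$ gives a factor $2^{2j(2/3-1/2)}=2^{j/3}$. Lemma~\ref{0206-2} gives exponential decay of the semigroup, applied with a slightly enlarged localizer $\tilde\phi_j$ so that $\phi_j*=\tilde\phi_j*\phi_j*$. Together these yield
\begin{equation*}
2^{j/3}\|\phi_j*D(t)\|_{L^2}\leq C\int_0^t 2^{5j/3}e^{-c(t-s)2^{5j/3}}\|\phi_j*F(s)\|_{L^{3/2}}\,{\rm d}s .
\end{equation*}
The kernel $\tau\mapsto 2^{5j/3}e^{-c\tau 2^{5j/3}}$ has $L^1(0,\infty)$ norm bounded independently of $j$; this cancellation of $2^{5j/3}$ against the decay rate is exactly where $\alpha=5/3$ enters. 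Young's inequality in time then gives
\begin{equation*}
2^{j/3}\|\phi_j*D\|_{L^2(0,T;L^2)}\leq C\|\phi_j*F\|_{L^2(0,T;L^{3/2})},
\end{equation*}
with $C$ independent of $j$, including negative $j$.

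\emph{Step 3: summation over $j$.} Squaring and summing over $j$ reduces the lemma to the bound
\begin{equation*}
\sum_{j\in\mathbb{Z}}\|\phi_j*F(t)\|_{L^{3/2}}^2\leq C\|F(t)\|_{L^{3/2}}^2.
\end{equation*}
Integrating this over $(0,T)$ then gives the claim with $C_T=C T^{1/2}$ after taking square roots. This inequality is the main point to justify, because Lemma~\ref{0205-8} is stated only for non-homogeneous spaces. I would prove it directly. Since $3/2\leq 2$, Minkowski's inequality (exchanging the $\ell^2$ and $L^{3/4}$ norms of $|\phi_j*F|^{3/2}$) gives
\begin{equation*}
\big\|\{\phi_j*F\}\big\|_{\ell^2L^{3/2}}\leq\Big\|\big(\textstyle\sum_j|\phi_j*F|^2\big)^{1/2}\Big\|_{L^{3/2}}.
\end{equation*}
The right-hand side is bounded by $C\|F\|_{L^{3/2}}$ by the homogeneous Littlewood--Paley square-function theorem for $1<p<\infty$. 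Alternatively, one can reduce to Lemma~\ref{0205-8} for $j\geq 1$, handle the block $\psi*F$ separately, and note that the low-frequency blocks $j\leq 0$ are controlled by the same square-function argument.
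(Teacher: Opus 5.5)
Your proposal is correct and is essentially the paper's own argument. The shared steps are blockwise exponential decay from Lemma~\ref{0206-2}, the factor $2^{j}$ from Lemma~\ref{0205-7}, the Bernstein/Sobolev gain $2^{j/3}$ from $L^{3/2}$ to $L^{2}$, Young's inequality in time with the $j$-uniform bound $2^{\frac{5}{3}j}\|e^{-c\tau 2^{\frac{5}{3}j}}\|_{L^{1}_{\tau}}\leq C$, and finally $L^{3/2}\hookrightarrow B^{0}_{\frac{3}{2},2}$ to sum over $j$. The only real difference is that you sum over all $j\in\mathbb{Z}$ with the homogeneous square-function theorem, which removes the separate low-frequency term and gives $C_T=CT^{1/2}$, whereas the paper uses $H^{\frac{1}{3}}\hookrightarrow\dot{H}^{\frac{1}{3}}$ and bounds the block $\psi*$ crudely via $L^{2}$-boundedness of the semigroup. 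In your Minkowski step the norms to exchange are the $\ell^{4/3}$ and $L^{1}$ norms of $|\phi_j*F|^{3/2}$, not $\ell^{2}$ and $L^{3/4}$; the inequality you state is nonetheless correct.
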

\begin{proof}
  By $H^{s}\hookrightarrow \dot{H}^{s}$ ($s>0$) and $H^{s} = B_{2,2}^{s}$, using Minkowski's inequality, we get 
  \begin{align*}
    &{\left\|\int_{0}^{t}e^{-(t-s)\Lambda^{\frac{5}{3}}}\Big(\big((\nabla ^\perp \Lambda^{-1}\theta)\cdot\nabla\big)\theta\Big) ~{\rm d}s\right\|}_{L^2(0,T; \dot{H}^{\frac{1}{3}})}\\ 
    \leq& C{\left\|\psi*\left(\int_{0}^{t}e^{-(t-s)\Lambda^{\frac{5}{3}}}\Big(\big((\nabla ^\perp \Lambda^{-1}\theta)\cdot\nabla\big)\theta\Big) ~{\rm d}s\right)\right\|}_{L^2(0,T;L^{2})}\\ 
    &+ C{\left\|\left\{2^{\frac{1}{3}j}{\left\|\phi_j*\left(\int_{0}^{t}e^{-(t-s)\Lambda^{\frac{5}{3}}}\Big(\big((\nabla ^\perp \Lambda^{-1}\theta)\cdot\nabla\big)\theta\Big) ~{\rm d}s\right)\right\|}_{L^2(0,T;L^{2})}\right\}_{j\in\mathbb{N}}\right\|}_{l^2}. 
  \end{align*}
  From the boundedness of $e^{-t\Lambda^{\frac{5}{3}}}$ in $L^{2}$ and using Young's inequality and Lemma~\ref{0205-6}, we have 
  \begin{equation*}
    {\left\|\psi*\left(\int_{0}^{t}e^{-(t-s)\Lambda^{\frac{5}{3}}}\Big(\big((\nabla ^\perp \Lambda^{-1}\theta)\cdot\nabla\big)\theta\Big) ~{\rm d}s\right)\right\|}_{L^2(0,T;L^{2})} 
    \leq C T{\big\|\psi*\big((\nabla ^\perp \Lambda^{-1}\theta)\theta\big)\big\|}_{L^{2}(0,T; L^{\frac{3}{2}})}. 
  \end{equation*}
  For each $j\in \mathbb{N}$, by Lemma~\ref{0206-2}, Sobolev embedding $W^{\frac{1}{3}, \frac{3}{2}}(\mathbb{R}^{2}) \hookrightarrow L^{2}(\mathbb{R}^{2})$ and Lemma~\ref{0205-7} \eqref{0409-3}, we obtain 
  \begin{align*}
    &2^{\frac{1}{3}j}{\left\|\phi_j*\left(\int_{0}^{t}e^{-(t-s)\Lambda^{\frac{5}{3}}}\Big(\big((\nabla ^\perp \Lambda^{-1}\theta)\cdot\nabla\big)\theta\Big) ~{\rm d}s\right)\right\|}_{L^2(0,T;L^2)}\\ 
    \leq& C 2^{\frac{5}{3}j}{\left\|\int_{0}^{t} e^{-c(t-s)2^{\frac{5}{3} j}}{\big\|\phi_j*\big((\nabla ^\perp \Lambda^{-1}\theta)\theta\big)\big\|}_{L^{\frac{3}{2}}} ~{\rm d}s\right\|}_{L^2(0,T)}. 
  \end{align*}
  Thanks to Young's inequality, we have 
  \begin{align*}
    &2^{\frac{5}{3}j}{\left\|\int_{0}^{t} e^{-c(t-s)2^{\frac{5}{3} j}}{\big\|\phi_j*\big((\nabla ^\perp \Lambda^{-1}\theta)\theta\big)\big\|}_{L^{\frac{3}{2}}} ~{\rm d}s\right\|}_{L^2(0,T)}\\ 
    \leq & 2^{\frac{5}{3}j}{\left\|e^{-ct2^{\frac{5}{3} j}}\right\|}_{L^{1}(0,T)}{\big\|\phi_j*\big((\nabla ^\perp \Lambda^{-1}\theta)\theta\big)\big\|}_{L^2(0,T; L^{\frac{3}{2}})}\\ 
    \leq& C{\big\|\phi_j*\big((\nabla ^\perp \Lambda^{-1}\theta)\theta\big)\big\|}_{L^{2}(0,T; L^{\frac{3}{2}})}. 
  \end{align*}
  Since $L^{\frac{3}{2}}\hookrightarrow B_{\frac{3}{2}, 2}^{0}$ (Lemma~\ref{0205-8} \eqref{0409-4}), we obtain 
  \begin{align*}
    {\left\|\int_{0}^{t}e^{-(t-s)\Lambda^{\frac{5}{3}}}\Big(\big((\nabla ^\perp \Lambda^{-1}\theta)\cdot\nabla\big)\theta\Big) ~{\rm d}s\right\|}_{L^q(0,T; \dot{H}^{\frac{1}{3}})} 
    &\leq C_{T}{\|(\nabla ^\perp \Lambda^{-1}\theta)\theta\|}_{L^{2}(0,T; L^{\frac{3}{2}})}\\ 
    &\leq C_{T}{\|\theta\|}_{L^\infty(0,T; L^{3})}^2. 
  \end{align*}
\end{proof}

The following lemma follows from $\nabla\cdot \nabla^{\perp}f = 0$ and Lemma~\ref{0206-9}. 

%%$1<\alpha< 3/2$, first iteration%% 
\begin{lem}\label{0206-10}
  Let $1<\alpha < 3/2$, $T>0$ and $\max\{1, 1/(\alpha-1)\} \leq p < 2/(\alpha-1)$
  Then there exist a constant $C_{T}>0$ such that for any $\theta \in L^{\infty}(0,T ; L^{\frac{2}{\alpha-1}})$, we have 
  \begin{equation*}
    {\left\|\int_{0}^{t}e^{-(t-s)\Lambda^{\alpha}}\Big(\big((\nabla ^\perp \Lambda^{-1}\theta)\cdot\nabla\big)\theta\Big) ~{\rm d}s\right\|}_{L^\infty(0,T; L^{p})} 
    \leq C_{T}{\|\theta\|}_{L^\infty(0,T; L^{\frac{2}{\alpha-1}})}^2. 
  \end{equation*}
\end{lem}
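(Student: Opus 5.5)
Since $\nabla\cdot\nabla^{\perp}\Lambda^{-1}\theta=0$, the nonlinearity is a divergence:
\[
\big((\nabla^{\perp}\Lambda^{-1}\theta)\cdot\nabla\big)\theta=\nabla\cdot\big((\nabla^{\perp}\Lambda^{-1}\theta)\theta\big).
\]
So I will move the gradient onto the semigroup and apply Lemma~\ref{0206-9}. For each fixed $t\in(0,T)$ and $s\in(0,t)$ this gives
\[
\Big\|e^{-(t-s)\Lambda^{\alpha}}\nabla\cdot\big((\nabla^{\perp}\Lambda^{-1}\theta)\theta\big)(s)\Big\|_{L^{p}}
\le C(t-s)^{-\left(\frac{1}{\alpha}+\frac{2}{\alpha}\left(\frac{1}{r}-\frac{1}{p}\right)\right)}\big\|(\nabla^{\perp}\Lambda^{-1}\theta)\theta(s)\big\|_{L^{r}},
\]
with $r=1/(\alpha-1)$. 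This choice of $r$ is the natural Hölder exponent of the product of two $L^{2/(\alpha-1)}$ functions, and the hypothesis $p\ge 1/(\alpha-1)$ is exactly $r\le p$, as required by Lemma~\ref{0206-9}.

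Next I bound the product in $L^{r}$. Since $1<\alpha<3/2$, we have $2/(\alpha-1)>4$, so $1<2/(\alpha-1)<\infty$ and $\nabla^{\perp}\Lambda^{-1}$ (a vector of Riesz transforms) is bounded on $L^{2/(\alpha-1)}$. Hölder's inequality then gives
\[
\big\|(\nabla^{\perp}\Lambda^{-1}\theta)\theta\big\|_{L^{1/(\alpha-1)}}\le C\|\theta\|_{L^{2/(\alpha-1)}}^{2}
\]
for a.e.\ $s$.

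It remains to check that the time singularity is integrable. The exponent is
\[
\frac{1}{\alpha}+\frac{2}{\alpha}\Big(\alpha-1-\frac1p\Big)=\frac{2\alpha-1}{\alpha}-\frac{2}{\alpha p},
\]
and this is $<1$ if and only if $\alpha-1<2/p$, i.e.\ $p<2/(\alpha-1)$, which is precisely the upper restriction on $p$. Writing $\beta<1$ for this exponent, Minkowski's inequality for the Bochner integral gives
\[
\Big\|\int_0^t\cdots\,{\rm d}s\Big\|_{L^p}\le C\int_0^t(t-s)^{-\beta}\,{\rm d}s\,\|\theta\|_{L^\infty(0,T;L^{2/(\alpha-1)})}^{2}\le C\frac{T^{1-\beta}}{1-\beta}\|\theta\|_{L^\infty(0,T;L^{2/(\alpha-1)})}^{2}
\]
uniformly in $t\in(0,T)$, which is the claim with $C_T=CT^{1-\beta}/(1-\beta)$.

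The main obstacle is making the first step rigorous rather than the estimate itself. One must justify moving the divergence onto the semigroup and making sense of the Duhamel integral as an $L^p$-valued Bochner integral. I would do this by first proving the bound for the kernel operator $\nabla e^{-(t-s)\Lambda^\alpha}$ acting on the $L^{r}$ function $(\nabla^{\perp}\Lambda^{-1}\theta)\theta$, which is strongly measurable in $s$. The resulting integrand is then absolutely integrable in $L^p$, and pairing with $\phi\in\mathcal{S}$ identifies it with the distributional expression appearing in Definition~\ref{0215-1}.
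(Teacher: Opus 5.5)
Your proposal is correct and follows the paper's route: the paper states only that the lemma ``follows from $\nabla\cdot\nabla^{\perp}f=0$ and Lemma~\ref{0206-9},'' and your argument supplies the details. Those details are the divergence form, the $L^{1/(\alpha-1)}\to L^{p}$ bound for $\nabla e^{-(t-s)\Lambda^{\alpha}}$, H\"older with Riesz-transform boundedness, and the check that the time exponent $(2\alpha-1-2/p)/\alpha$ is below $1$ exactly when $p<2/(\alpha-1)$.
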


Using the above lemmas, we prove Proposition~\ref{0204-3}. 
%%エネルギークラスに入ることの証明%% 
\begin{proof}[Proof of Proposition~\ref{0204-3}]
  Since $\theta, \bar{\theta} \in L^{\infty}(0,T; L^{\frac{2}{\alpha-1}})$ are solutions of the integral equation of \eqref{SQG} in the sense of Definition~\ref{0215-1}, we can write for any $\phi \in \mathcal{S}(\mathbb{R}^2)$, 
  \begin{align*}
    &\int_{\mathbb{R}^2}(\theta(t,y) - \bar{\theta}(t,y))\phi(y)~{\rm d}y\\ 
    =& \int_{\mathbb{R}^2}\left(\int_{0}^{t}e^{-(t-s)\Lambda^{\alpha}}\big((\nabla^{\perp}\Lambda^{-1}\theta)\theta - (\nabla^{\perp}\Lambda^{-1}\bar{\theta})\bar{\theta}\big)~{\rm d}s\right)\cdot\nabla \phi ~{\rm d}y\\ 
    =& \int_{\mathbb{R}^2}\left(\int_{0}^{t}e^{-(t-s)\Lambda^{\alpha}}\big((\nabla^{\perp}\Lambda^{-1}(\theta - \bar{\theta}))\theta + (\nabla^{\perp}\Lambda^{-1}\bar{\theta})(\theta - \bar{\theta})\big)~{\rm d}s\right)\cdot\nabla \phi ~{\rm d}y. 
  \end{align*}
  Fix $t\in (0,T)$, $x\in \mathbb{R}^2$ and $j\in \mathbb{Z}$. 
  For $\phi_j(x-\cdot)\in \mathcal{S}(\mathbb{R}^2)$, we obtain 
  \begin{equation*}
    \phi_j*(\theta - \bar{\theta})(t) 
    = -\nabla \phi_j * \left(\int_{0}^{t}e^{-(t-s)\Lambda^{\alpha}}\big((\nabla^{\perp}\Lambda^{-1}(\theta - \bar{\theta}))\theta + (\nabla^{\perp}\Lambda^{-1}\bar{\theta})(\theta - \bar{\theta})\big)~{\rm d}s\right). 
  \end{equation*}
  From the equivalent norm of $\dot{H}^{s}$ \eqref{0406-1}, we get 
  \begin{align*}
    &{\|\theta - \bar{\theta}\|}_{L^{\infty}(0,T; \dot{H}^{-\frac{1}{2}})}\\ 
    =& {\left\|\int_{0}^{t}e^{-(t-s)\Lambda^{\alpha}}\Big(\big((\nabla^{\perp}\Lambda^{-1}(\theta - \bar{\theta}))\cdot \nabla\big)\theta + \big((\nabla^{\perp}\Lambda^{-1}\bar{\theta})\cdot \nabla\big)(\theta - \bar{\theta})\Big)~{\rm d}s\right\|}_{L^{\infty}(0,T; \dot{H}^{-\frac{1}{2}})}, 
  \end{align*}
  and 
  \begin{align*}
    &{\|\theta - \bar{\theta}\|}_{L^{2}(0,T; \dot{H}^{\frac{\alpha}{2}-\frac{1}{2}})}\\ 
    =& {\left\|\int_{0}^{t}e^{-(t-s)\Lambda^{\alpha}}\Big(\big((\nabla^{\perp}\Lambda^{-1}(\theta - \bar{\theta}))\cdot \nabla\big)\theta + \big((\nabla^{\perp}\Lambda^{-1}\bar{\theta})\cdot \nabla\big)(\theta - \bar{\theta})\Big)~{\rm d}s\right\|}_{L^{2}(0,T; \dot{H}^{\frac{\alpha}{2}-\frac{1}{2}})}. 
  \end{align*}
  If $3/2\leq \alpha \leq 5/3$, then Proposition \ref{0204-3} follows directly from Lemma~\ref{0206-3}, Lemma~\ref{0206-4} and Lemma~\ref{0206-5}. 
  In the case of $1<\alpha <3/2$, we use an iteration scheme. 
  Thanks to Lemma~\ref{0206-10}, we have 
  \begin{equation*}
    \theta - \bar{\theta} \in L^\infty (0,T ; L^\frac{1}{\alpha-1}). 
  \end{equation*}
  Onthe other hand, since $e^{-t\Lambda^{\alpha}}\theta_0\in L^\infty (0,T ; L^{\frac{2}{\alpha-1}})$, we get 
  \begin{equation*}
    \theta, \bar{\theta} \in L^\infty (0,T ; L^{\frac{2}{\alpha-1}}) + L^\infty (0,T ; L^\frac{1}{\alpha-1}). 
  \end{equation*}
  Thus, by the same argument as in Lemma~\ref{0206-10}, we obtain 
  \begin{align*}
    &\int_{0}^{t}e^{-(t-s)\Lambda^{\alpha}}\Big(\big((\nabla^{\perp}\Lambda^{-1}(\theta - \bar{\theta}))\cdot \nabla\big)\theta + \big((\nabla^{\perp}\Lambda^{-1}\bar{\theta})\cdot \nabla\big)(\theta - \bar{\theta})\Big)~{\rm d}s\\ 
    \in& L^\infty (0,T ; L^{\frac{2}{3(\alpha-1)}}) + L^\infty (0,T ; L^\frac{1}{2(\alpha-1)}), 
  \end{align*}
  which implies $\theta - \bar{\theta} \in L^\infty (0,T ; L^{\frac{2}{3(\alpha-1)}}) + L^\infty (0,T ; L^\frac{1}{2(\alpha-1)})$. 
  Also, we have 
  \begin{equation*}
    \theta, \bar{\theta} \in L^\infty (0,T ; L^{\frac{2}{\alpha-1}}) 
    + L^\infty (0,T ; L^{\frac{1}{\alpha-1}}) 
    + L^\infty (0,T ; L^{\frac{2}{3(\alpha-1)}}) 
    + L^\infty (0,T ; L^{\frac{1}{2(\alpha-1)}}). 
  \end{equation*}
  Iterating this argument (and interpolation if necessary), we obtain 
  \begin{equation*}
    \theta - \bar{\theta} \in L^\infty(0,T; L^{\frac{2}{2-\alpha}}). 
  \end{equation*}
  Note that $2/(2-\alpha)> 2$ and 
  \begin{equation*}
    \frac{1}{2} = \frac{1}{\frac{2}{\alpha -1 }} + \frac{1}{\frac{2}{2 - \alpha}}. 
  \end{equation*}
  Hence, using Lemma~\ref{0206-1}, we have 
  \begin{align*}
    &{\left\|\int_{0}^{t}e^{-(t-s)\Lambda^{\alpha}}\Big(\big((\nabla^{\perp}\Lambda^{-1}(\theta - \bar{\theta}))\cdot \nabla\big)\theta + \big((\nabla^{\perp}\Lambda^{-1}\bar{\theta})\cdot \nabla\big)(\theta - \bar{\theta})\Big)~{\rm d}s\right\|}_{L^{\infty}(0,T; \dot{H}^{-\frac{1}{2}})}\\ 
    \leq& C {\left\|\int_{0}^{t}e^{-(t-s)\Lambda^{\alpha}}\Big(\big((\nabla^{\perp}\Lambda^{-1}(\theta - \bar{\theta}))\big)\theta + (\nabla^{\perp}\Lambda^{-1}\bar{\theta})(\theta - \bar{\theta})\Big)~{\rm d}s\right\|}_{L^{\infty}(0,T; B^{\frac{1}{2}}_{2,2})}\\ 
    \leq& C_{T}\big({\|\theta\|}_{L^\infty(0,T; L^{\frac{2}{\alpha - 1}})} + {\|\bar{\theta}\|}_{L^\infty(0,T; L^{\frac{2}{\alpha - 1}})}\big){\|\theta - \bar{\theta}\|}_{L^\infty(0,T; L^{\frac{2}{2-\alpha}})}, 
  \end{align*}
  and 
  \begin{align*}
    &{\left\|\int_{0}^{t}e^{-(t-s)\Lambda^{\alpha}}\Big(\big((\nabla^{\perp}\Lambda^{-1}(\theta - \bar{\theta}))\cdot \nabla\big)\theta + \big((\nabla^{\perp}\Lambda^{-1}\bar{\theta})\cdot \nabla\big)(\theta - \bar{\theta})\Big)~{\rm d}s\right\|}_{L^{2}(0,T; \dot{H}^{\frac{\alpha}{2}-\frac{1}{2}})}\\ 
    \leq& C_{T}\big({\|\theta\|}_{L^\infty(0,T; L^{\frac{2}{\alpha - 1}})} + {\|\bar{\theta}\|}_{L^\infty(0,T; L^{\frac{2}{\alpha - 1}})}\big){\|\theta - \bar{\theta}\|}_{L^\infty(0,T; L^{\frac{2}{2-\alpha}})}. 
  \end{align*}
\end{proof}

\subsection{Justification of the energy inequality in Besov spaces}
We next consider the case where the solution belongs to Besov spaces. 

Under the assumptions of Theorem~\ref{0212-1}, the following proposition holds. 

%%Besovの場合にエネルギークラスに入ること%% 
\begin{prop}\label{0211-1}
  Let $1<\alpha \leq 5/3$ and $T>0$. 
  Let $\theta, \bar{\theta} \in L^{\infty}(0,T; B_{\frac{4}{\alpha-1},2}^{-\frac{1}{2}(\alpha - 1)})$ be solutions of the integral equation of \eqref{SQG} in the sense of Definition~\ref{0215-1} with same initial data $\theta_0 \in B_{\frac{4}{\alpha-1},2}^{-\frac{1}{2}(\alpha - 1)}$. 
  Then we have 
  \begin{equation*}
    \theta - \bar{\theta} \in L^{\infty}(0,T; \dot{H}^{-\frac{1}{2}})\cap L^{2}(0,T; \dot{H}^{\frac{1}{2} - \frac{\alpha}{2}}). 
  \end{equation*}
\end{prop}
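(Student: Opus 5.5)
The plan is to follow the structure of the proof of Proposition~\ref{0204-3}, with the Lebesgue-space tools replaced by the paraproduct estimates of Lemmas~\ref{0205-5}, \ref{0212-7} and \ref{0205-9}. I read the target space as $L^{2}(0,T;\dot{H}^{\frac{\alpha}{2}-\frac{1}{2}})$, as in \eqref{0215-6}.

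\textbf{Setup and the Duhamel identity.} Write $p:=4/(\alpha-1)$ and $\sigma:=-\frac12(\alpha-1)=1-\alpha+\frac2p$, so that $\sigma>-1/2$ and the symmetrized product is defined. As in the proof of Proposition~\ref{0204-3}, testing against $\phi_j(x-\cdot)$ gives, for $D:=\theta-\bar\theta$,
\begin{equation*}
  \phi_j*D(t)=-\nabla\phi_j*\int_0^t e^{-(t-s)\Lambda^\alpha}\big((\nabla^\perp\Lambda^{-1}D)\theta+(\nabla^\perp\Lambda^{-1}\bar\theta)D\big)\,{\rm d}s .
\end{equation*}
Every norm of $D$ can therefore be computed from the bilinear Duhamel term $B(D,\Theta)$ with $\Theta\in\{\theta,\bar\theta\}$. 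Each product is split by Littlewood--Paley into low--high, high--low and resonant parts.
\begin{itemize}
  \item The paraproduct parts are handled by Lemma~\ref{0205-5}, using \eqref{0212-9} because both factors have negative regularity.
  \item The resonant part uses the divergence structure of the nonlinearity (the $\nabla\cdot$ is absorbed into the derivative-form products of Lemma~\ref{0205-9}). Lemma~\ref{0205-9} only requires the output index to exceed $-2$, so it accepts negative-regularity factors.
\end{itemize}

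\textbf{Iteration to lower integrability.} The starting point is $D\in L^\infty(0,T;B^{\sigma}_{p,2})$. As in the Lebesgue case, the mechanism is that multiplying $D$ by $\Theta\in B^{\sigma}_{p,2}$ lowers the integrability: if $D\in L^\infty(0,T;B^{\sigma_k}_{p_k,2})$, then the product lies in $B^{\sigma_k+\sigma}_{r,1}$ with $1/r=1/p_k+1/p$ (an $l^2\times l^2\to l^1$ gain in $q$). Then Lemma~\ref{0206-1}, with time singularity $(t-s)^{-\frac1\alpha(\cdot)}$ integrable as long as the regularity gain stays below $\alpha-1$, together with the derivative, returns
\begin{equation*}
  D\in L^\infty(0,T;B^{\sigma_{k+1}}_{p_{k+1},2}),\qquad \frac1{p_{k+1}}=\frac1{p_k}+\frac{\alpha-1}{4},
\end{equation*}
with $\sigma_{k+1}$ kept equal to $\sigma$ (or slightly below it, using the slack, and interpolating if necessary). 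After finitely many steps one reaches $p_K$ with $1/p_K+(\alpha-1)/4=1/2$, exactly as the exponent $2/(2-\alpha)$ was reached in the Lebesgue proof.

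\textbf{Final step in $L^2$-based spaces.} In the last step the product lands in $B^{2\sigma}_{2,1}\hookrightarrow B^{2\sigma}_{2,2}$.
\begin{itemize}
  \item For the $L^\infty(0,T;\dot H^{-1/2})$ bound, I would argue as in Lemma~\ref{0206-3}. The low frequency block is controlled by the Fourier multiplier bound $\|\nabla\Lambda^{-1/2}\psi*f\|_{L^2}\le C\|\psi*f\|_{L^2}$, and the high frequencies by Lemma~\ref{0206-1}. The time exponent is $\frac1\alpha(\frac12+1-2\sigma)=\frac{\alpha+1/2}{\alpha}\cdot$(appropriately reduced), which I expect to be integrable.
  \item For the $L^2(0,T;\dot H^{\frac{\alpha}{2}-\frac12})$ bound when $\alpha<5/3$, the same computation works. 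At the endpoint $\alpha=5/3$, I would use the dyadic argument of Lemma~\ref{0206-5}: Lemma~\ref{0206-2} plus Young's inequality in time, giving $2^{\alpha j}\|e^{-ct2^{\alpha j}}\|_{L^1(0,T)}\le C$ blockwise, followed by $l^2$ summation. This works because the product is in $L^2$ in time with $B_{\cdot,2}$ summability.
\end{itemize}

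\textbf{Expected main obstacle.} The hardest step should be the bookkeeping of the iteration: choosing intermediate regularities $\sigma_k$ that keep every time singularity integrable while keeping the resonant terms within the hypotheses of Lemma~\ref{0205-9}. I expect the $q=2$ structure of $B^{\sigma}_{\frac4{\alpha-1},2}$ to be essential at the endpoint $\alpha=5/3$, since there the time integrability is exactly borderline and no $\varepsilon$ loss is allowed.
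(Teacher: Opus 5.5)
Your iteration at fixed regularity $\sigma=-\frac{\alpha-1}{2}$ in $L^\infty(0,T;B^{\sigma}_{p_k,2})$ is sound, since each step needs a gain of only $\frac{\alpha+1}{2}<\alpha$ derivatives. The $L^\infty(0,T;\dot H^{-\frac12})$ bound is also fine once the exponent is corrected. Measuring the product in $B^{\frac12}_{2,2}$ as in Lemma~\ref{0206-3}, the singularity is $(t-s)^{-\frac{1}{\alpha}(\frac12-2\sigma)}=(t-s)^{-\frac{\alpha-1/2}{\alpha}}$, which is integrable. Your $\frac{\alpha+1/2}{\alpha}$ counts the derivative twice and would not be integrable.

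The genuine gap is the $L^2(0,T;\dot H^{\frac{\alpha}{2}-\frac12})$ bound. If $D$ is only known at regularity $\sigma<0$, then:
\begin{itemize}
\item the product with $\Theta$ lies at best in $B^{2\sigma}_{2,1}=B^{1-\alpha}_{2,1}$, so the nonlinear term lies in $B^{-\alpha}_{2,1}$;
\item reaching $\dot H^{\frac\alpha2-\frac12}$ therefore requires the semigroup to gain $\frac{3\alpha-1}{2}$ derivatives;
\item Lemma~\ref{0206-1} with $L^\infty$-in-time data gives gains strictly below $\alpha$, and the blockwise argument of Lemma~\ref{0206-5} gives exactly $\alpha$.
\end{itemize}
Since $\frac{3\alpha-1}{2}>\alpha$ for every $\alpha>1$, the step fails for all $\alpha\in(1,\frac53]$, not just at the endpoint. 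At $\alpha=\frac53$ your dyadic argument lands in $L^2(0,T;L^2)$, a third of a derivative short. In the Lebesgue proof this was harmless because both factors sat at regularity $0$. Here $\Theta$ has negative regularity, and $B^{\sigma}_{\frac4{\alpha-1},2}$ leaves no slack at any $\alpha$.

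The missing idea, which is the heart of the paper's proof, is to use the maximal-regularity estimate inside the iteration. This means Lemma~\ref{0206-2} plus Young's inequality in time, block by block, with $\ell^2$ summation. It trades $L^\infty$ for $L^2$ in time in order to lift $D$ to regularity $0$.
\begin{itemize}
\item Lemma~\ref{0211-2} does this at the first step, giving $D\in L^2(0,T;L^{\frac{2}{\alpha-1}})$.
\item The paper then carries $L^\infty(0,T;B^{-\varepsilon}_{p_k,\cdot})\cap L^2(0,T;L^{p_k})$ down to $p_k=\frac{4}{3-\alpha}$.
\item With $D\in L^2(0,T;L^{\frac4{3-\alpha}})$, the product with $\Theta$ is in $L^2(0,T;H^{\sigma})$, so the nonlinear term is in $L^2(0,T;H^{-\frac\alpha2-\frac12})$.
\item A second maximal-regularity step then gives exactly $L^2(0,T;\dot H^{\frac\alpha2-\frac12})$.
\end{itemize}
In your scheme, the repair is to do the last integrability step, from $\frac{2}{2-\alpha}$ to $\frac{4}{3-\alpha}$, with this $L^2$-in-time estimate. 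That yields $D\in L^2(0,T;B^{0}_{\frac{4}{3-\alpha},2})$ before the final estimate.

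A smaller point: Lemma~\ref{0205-9} applies only to the symmetric combination. Writing $B(f,g):=((\nabla^{\perp}\Lambda^{-1}f)\cdot\nabla)g$, you must first rewrite the difference nonlinearity as $\frac12\big(B(D,\theta)+B(\theta,D)+B(D,\bar\theta)+B(\bar\theta,D)\big)$. The divergence form alone does not make the resonant product of two negative-regularity factors meaningful.
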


We state lemmas needed to prove Proposition~\ref{0211-1}. 
%%first iteration%% 
\begin{lem}\label{0211-2}
  Let $1<\alpha \leq 5/3$ and $T>0$. 
  Then there exists a constant $C_{T}>0$ such that for any $\theta \in L^{\infty}(0,T; B_{\frac{4}{\alpha-1},2}^{-\frac{1}{2}(\alpha - 1)})$, we have 
  \begin{equation*}
    {\left\|\int_{0}^{t}e^{-(t-s)\Lambda^{\alpha}}\Big(\big((\nabla ^\perp \Lambda^{-1}\theta)\cdot\nabla\big)\theta\Big) ~{\rm d}s\right\|}_{L^{2}(0,T; L^{\frac{2}{\alpha-1}})} 
    \leq C_{T}{\|\theta\|}_{L^\infty(0,T; B_{\frac{4}{\alpha-1},2}^{-\frac{1}{2}(\alpha - 1)})}^2. 
  \end{equation*}
\end{lem}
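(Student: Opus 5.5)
We need to bound the Duhamel term in $L^2(0,T;L^{2/(\alpha-1)})$ for $\theta\in L^\infty(0,T;B^{-\sigma}_{p,2})$, where $p=4/(\alpha-1)$ and $\sigma=\frac12(\alpha-1)$. Set $r:=2/(\alpha-1)=p/2$. Since $r\ge 2$ (because $\alpha\le 3$), Lemma~\ref{0205-8}~\eqref{0409-5} gives $B^0_{r,2}\hookrightarrow L^r$. It therefore suffices to bound the Duhamel term in $L^2(0,T;B^0_{r,2})$. We split $\psi*$ and $\phi_j*$ and argue blockwise with Lemma~\ref{0206-2}, as in the proof of Lemma~\ref{0206-5}.

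\emph{Decomposition of the nonlinearity.} Write $N(\theta)=((\nabla^\perp\Lambda^{-1}\theta)\cdot\nabla)\theta$ via Bony's decomposition into the two paraproducts and the resonant part. For the paraproduct where $\nabla^\perp\Lambda^{-1}\theta$ is at low frequency, apply \eqref{0212-9} of Lemma~\ref{0205-5} with $\varepsilon=\sigma$, $p_1=p_2=p$, $q_1=\infty$, $q_2=2$. This works because $\nabla^\perp\Lambda^{-1}\theta\in B^{-\sigma}_{p,\infty}$ up to the low-frequency part, which is harmless for $\psi$ by \eqref{0406-2}, and because $\nabla\theta\in B^{-1-\sigma}_{p,2}$. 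The result is a bound in $B^{-1-2\sigma}_{r,2}$. The paraproduct with $\theta$ at low frequency is handled the same way, writing it as $\nabla\cdot((\nabla^\perp\Lambda^{-1}\theta)\,S\theta)$ or estimating it directly. For the resonant part, use the symmetrized divergence structure of Lemma~\ref{0205-9} with $s=-1-2\sigma$ and $s'=-\sigma$, so that $s+1-s'=-\sigma$; the hypothesis $s>-2$ holds since $\sigma\le 1/3$. (Lemma~\ref{0205-9} treats the symmetric pair; for $f=g=\theta$ the sum is twice our resonant term.) Altogether,
\[
\|N(\theta)(s)\|_{B^{-\alpha}_{r,2}}\le C\|\theta(s)\|_{B^{-\sigma}_{p,2}}^2,
\]
because $1+2\sigma=\alpha$.

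\emph{Smoothing in time.} For $j\ge1$, Lemma~\ref{0206-2} gives
\[
\|\phi_j*\textstyle\int_0^t e^{-(t-s)\Lambda^\alpha}N\,ds\|_{L^r}\le C\int_0^t e^{-c(t-s)2^{\alpha j}}\|\phi_j*N(s)\|_{L^r}ds .
\]
Young's inequality in time, $L^1*L^2\to L^2$, produces the factor $\|e^{-ct2^{\alpha j}}\|_{L^1}\le C2^{-\alpha j}$. This exactly compensates the weight $2^{\alpha j}$ needed to pass from $B^{-\alpha}_{r,2}$ to $B^0_{r,2}$. Taking the $\ell^2$ sum in $j$ and using Minkowski (here $q=2=$ time exponent) gives a bound by $C\|N\|_{L^2(0,T;B^{-\alpha}_{r,2})}\le C_T\|\theta\|^2_{L^\infty B^{-\sigma}_{p,2}}$. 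The low-frequency block $\psi*$ is bounded trivially by $T\|\psi*N\|_{L^\infty L^r}$, using boundedness of the semigroup.

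\emph{Main obstacle.} The delicate step is the product estimate at the endpoint regularity, in particular the low-frequency paraproduct. There the required $\ell^\infty$/$\ell^2$ splitting of the $q$-indices must land exactly in $\ell^2$ with regularity $-\alpha$. I would first try Lemma~\ref{0205-5} with $q_1=\infty$ and $q_2=2$. If the derivative placement causes trouble, I would instead rewrite $N(\theta)=\nabla\cdot((\nabla^\perp\Lambda^{-1}\theta)\theta)$ for the paraproduct pieces and use Lemma~\ref{0205-7} to move the derivative outside the frequency block.
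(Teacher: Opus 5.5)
Your proposal is correct and follows essentially the same route as the paper. You embed $B^0_{\frac{2}{\alpha-1},2}\hookrightarrow L^{\frac{2}{\alpha-1}}$, handle the $\psi$-block by boundedness of the semigroup, and handle the $\phi_j$-blocks by Lemma~\ref{0206-2} plus Young's inequality in time to reduce to $\|N(\theta)\|_{L^2(0,T;B^{-\alpha}_{\frac{2}{\alpha-1},2})}$; that quantity is then bounded via Bony's decomposition, with Lemma~\ref{0205-5}~\eqref{0212-9} for the paraproducts and the symmetrized Lemma~\ref{0205-9} for the resonant part, exactly as in the paper. (Two minor slips: $\frac{2}{\alpha-1}\ge 2$ requires $\alpha\le 2$, not $\alpha\le 3$, and the low-frequency block of $\nabla^\perp\Lambda^{-1}\theta$ is controlled by $L^p$-boundedness of the Riesz transform for $1<p<\infty$ rather than by \eqref{0406-2}.)
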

\begin{proof}
  By $B_{\frac{2}{\alpha - 1},2}^0 \hookrightarrow L^{\frac{2}{\alpha-1}}$ (Lemma~\ref{0205-8} \eqref{0409-5}) and Minkowski's inequality, we have 
  \begin{align*}
    &{\left\|\int_{0}^{t}e^{-(t-s)\Lambda^{\alpha}}\Big(\big((\nabla ^\perp \Lambda^{-1}\theta)\cdot\nabla\big)\theta\Big) ~{\rm d}s\right\|}_{L^{2}(0,T; L^{\frac{2}{\alpha - 1}})}\\ 
    \leq& C{\left\|\psi*\left(\int_{0}^{t}e^{-(t-s)\Lambda^{\alpha}}\Big(\big((\nabla^{\perp}\Lambda^{-1}\theta)\cdot\nabla\big)\theta\Big)~{\rm d}s\right)\right\|}_{L^{2}(0,T; L^{\frac{2}{\alpha - 1}})}\\ 
    & + C{\left\|\left\{{\left\|\phi_j*\left(\int_{0}^{t}e^{-(t-s)\Lambda^{\alpha}}\Big(\big((\nabla^{\perp}\Lambda^{-1}\theta)\cdot\nabla\big)\theta\Big)~{\rm d}s\right)\right\|}_{L^{2}(0,T; L^{\frac{2}{\alpha - 1}})}\right\}_{j\in \mathbb{N}} \right\|}_{l^{2}}. 
  \end{align*}
  Since $e^{-(t-s)\Lambda^{\alpha}}$ is bounded on $L^{\frac{2}{\alpha-1}}$, we have 
  \begin{align*}
    &{\left\|\psi*\left(\int_{0}^{t}e^{-(t-s)\Lambda^{\alpha}}\Big(\big((\nabla^{\perp}\Lambda^{-1}\theta)\cdot\nabla\big)\theta\Big)~{\rm d}s\right)\right\|}_{L^{\frac{2}{\alpha - 1}}}\\ 
    \leq& C \int_{0}^{t}{\left\|\psi*\Big(\big((\nabla^{\perp}\Lambda^{-1}\theta)\cdot\nabla\big)\theta\Big)\right\|}_{L^{\frac{2}{\alpha - 1}}}~{\rm d}s. 
  \end{align*}
  For each $j\in \mathbb{N}$, using Lemma~\ref{0206-2}, we obtain 
  \begin{align*}
    &{\left\|\phi_j*\left(\int_{0}^{t}e^{-(t-s)\Lambda^{\alpha}}\Big(\big((\nabla^{\perp}\Lambda^{-1}\theta)\cdot\nabla\big)\theta\Big)~{\rm d}s\right)\right\|}_{L^{\frac{2}{\alpha - 1}}}\\ 
    \leq& C \int_{0}^{t}e^{-ct2^{\alpha}j}{\left\|\phi_j*\Big(\big((\nabla^{\perp}\Lambda^{-1}\theta)\cdot\nabla\big)\theta\Big)\right\|}_{L^{\frac{2}{\alpha - 1}}}~{\rm d}s. 
  \end{align*}
  By Young's inequality, we get 
  \begin{equation*}
    {\left\|\int_{0}^{t}e^{-(t-s)\Lambda^{\alpha}}\Big(\big((\nabla ^\perp \Lambda^{-1}\theta)\cdot\nabla\big)\theta\Big) ~{\rm d}s\right\|}_{L^{2}(0,T; L^{\frac{2}{\alpha - 1}})} 
    \leq C_{T}{\left\|\big((\nabla^{\perp}\Lambda^{-1}\theta)\cdot\nabla\big)\theta\right\|}_{L^{2}(0,T; B_{\frac{2}{\alpha-1},2}^{-\alpha})}. 
  \end{equation*}
  Since Bony's decomposition (see \cite{Bo_1981}), we have 
  \begin{align*}
    &{\left\|\big((\nabla^{\perp}\Lambda^{-1}\theta)\cdot\nabla\big)\theta\right\|}_{B_{\frac{2}{\alpha-1},2}^{-\alpha}}\\ 
    \leq& {\left\|\sum_{l\geq 2}\big((\nabla^{\perp}\Lambda^{-1}(\psi*\theta))\cdot \nabla\big)(\phi_l*\theta) + \sum_{k\leq l - 2}\big((\nabla^{\perp}\Lambda^{-1}(\phi_k*\theta))\cdot \nabla\big)(\phi_l*\theta)\right\|}_{B_{\frac{2}{\alpha-1},2}^{-\alpha}} \\ 
    &+ {\left\|\sum_{k\geq 2}\big((\nabla^{\perp}\Lambda^{-1}(\phi_{k}*\theta))\cdot \nabla\big)(\psi*\theta) + \sum_{l\leq k - 2}\big((\nabla^{\perp}\Lambda^{-1}(\phi_k*\theta))\cdot \nabla\big)(\phi_l*\theta)\right\|}_{B_{\frac{2}{\alpha-1},2}^{-\alpha}}\\ 
    &+ {\left\|\big((\nabla^{\perp}\Lambda^{-1}(\psi*\theta))\cdot \nabla\big)(\psi*\theta)\right\|}_{B_{\frac{2}{\alpha-1},2}^{-\alpha}}\\ 
    &+ {\left\|\big((\nabla^{\perp}\Lambda^{-1}(\psi*\theta))\cdot \nabla\big)(\phi_{1}*\theta) + \big((\nabla^{\perp}\Lambda^{-1}(\phi_{1}*\theta))\cdot \nabla\big)(\psi_l*\theta)\right\|}_{B_{\frac{2}{\alpha-1},2}^{-\alpha}}\\ 
    &+ {\left\|\sum_{|k-l|\leq 1}\big((\nabla^{\perp}\Lambda^{-1}(\phi_k*\theta))\cdot \nabla\big)(\phi_l*\theta)\right\|}_{B_{\frac{2}{\alpha-1},2}^{-\alpha}}. 
  \end{align*} 
  Using Lemma~\ref{0205-5} \eqref{0212-9} and Lemma~\ref{0205-7} \eqref{0409-3}, we obtain 
  \begin{align*}
    &{\left\|\sum_{l\geq 2}\big((\nabla^{\perp}\Lambda^{-1}(\psi*\theta))\cdot \nabla\big)(\phi_l*\theta) +\sum_{k\leq l - 2}\big((\nabla^{\perp}\Lambda^{-1}(\phi_k*w))\cdot \nabla\big)(\phi_l*w)\right\|}_{B_{\frac{2}{\alpha-1},2}^{-\alpha}}\\ 
    \leq& C {\|\theta\|}_{B_{\frac{4}{\alpha-1},2}^{-\frac{1}{2}(\alpha - 1)}}^2, 
  \end{align*}
  and 
  \begin{align*}
    &{\left\|\sum_{k\geq 2}\big((\nabla^{\perp}\Lambda^{-1}(\phi_{k}*\theta))\cdot \nabla\big)(\psi*\theta) +\sum_{l\leq k - 2}\big((\nabla^{\perp}\Lambda^{-1}(\phi_k*w))\cdot \nabla\big)(\phi_l*w)\right\|}_{B_{\frac{2}{\alpha-1},2}^{-\alpha}}\\ 
    \leq& C {\|\theta\|}_{B_{\frac{4}{\alpha-1},2}^{-\frac{1}{2}(\alpha - 1)}}^2. 
  \end{align*}
  Since $\psi$ and $\phi_{1}$ has the support around the origin, we have 
  \begin{equation*}
    {\left\|\big((\nabla^{\perp}\Lambda^{-1}(\psi*\theta))\cdot \nabla\big)(\psi*\theta)\right\|}_{B_{\frac{2}{\alpha-1},2}^{-\alpha}} 
    \leq C {\|\theta\|}_{B_{\frac{4}{\alpha-1},2}^{-\frac{1}{2}(\alpha - 1)}}^2, 
  \end{equation*}
  \begin{align*}
    &{\left\|\big((\nabla^{\perp}\Lambda^{-1}(\psi*\theta))\cdot \nabla\big)(\phi_{1}*\theta) + \big((\nabla^{\perp}\Lambda^{-1}(\phi_{1}*\theta))\cdot \nabla\big)(\psi_l*\theta)\right\|}_{B_{\frac{2}{\alpha-1},2}^{-\alpha}}\\ 
    \leq& C {\|\theta\|}_{B_{\frac{4}{\alpha-1},2}^{-\frac{1}{2}(\alpha - 1)}}^2. 
  \end{align*}
  Note that we can write 
  \begin{align*}
    &\sum_{|k-l|\leq 1}\big((\nabla^{\perp}\Lambda^{-1}(\phi_k*\theta))\cdot \nabla\big)(\phi_l*\theta)\\ 
    =& \frac{1}{2}\sum_{|k-l|\leq 1}\Big(\big((\nabla^{\perp}\Lambda^{-1}(\phi_k*\theta))\cdot \nabla\big)(\phi_l*\theta) 
     + \big((\nabla^{\perp}\Lambda^{-1}(\phi_l*\theta))\cdot \nabla\big)(\phi_k*\theta)\Big). 
  \end{align*}
  Thus, by Lemma~\ref{0205-9}, we get 
  \begin{equation*}
    {\left\|\sum_{|k-l|\leq 1}\big((\nabla^{\perp}\Lambda^{-1}(\phi_k*\theta))\cdot \nabla\big)(\phi_l*\theta)\right\|}_{B_{\frac{2}{\alpha-1},2}^{-\alpha}} 
    \leq C {\|\theta\|}_{B_{\frac{4}{\alpha-1},2}^{-\frac{1}{2}(\alpha - 1)}}^2. 
  \end{equation*}
  Therefore, we obtain 
  \begin{equation*}
    {\left\|\int_{0}^{t}e^{-(t-s)\Lambda^{\alpha}}\Big(\big((\nabla ^\perp \Lambda^{-1}\theta)\cdot\nabla\big)\theta\Big) ~{\rm d}s\right\|}_{L^{2}(0,T; L^{\frac{2}{\alpha-1}})} 
    \leq C_{T}{\|\theta\|}_{L^{\infty}(0,T; B_{\frac{4}{\alpha-1},2}^{-\frac{1}{2}(\alpha - 1)})}^2
  \end{equation*}
\end{proof}

Thanks to Lemma~\ref{0206-1} and using a similar product estimate to Lemma~\ref{0211-2}, we obtain following lemma. 
%%スケールを少しずらしたfirst iteration%%
\begin{lem}\label{0211-3}
  Let $1<\alpha\leq 5/3$, $T>0$ and $\varepsilon\in\mathbb{R}$ with $0<\varepsilon\leq \alpha$. 
  Then there exists a constant $C_{T}>0$ such that for any $\theta \in L^\infty(0,T; B_{\frac{4}{\alpha-1}, 2}^{-\frac{1}{2}(\alpha - 1)})$, we have 
  \begin{equation*}
    {\left\|\int_{0}^{t}e^{-(t-s)\Lambda^{\alpha}}\Big(\big((\nabla ^\perp \Lambda^{-1}\theta)\cdot\nabla\big)\theta\Big) ~{\rm d}s\right\|}_{L^{\infty}(0,T; B_{\frac{2}{\alpha-1}, 1}^{-\varepsilon})} 
    \leq C_{T}{\|\theta\|}_{L^\infty(0,T; B_{\frac{4}{\alpha-1}, 2}^{-\frac{1}{2}(\alpha - 1)})}^2. 
  \end{equation*}
\end{lem}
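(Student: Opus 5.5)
Our plan is to combine the product estimate already obtained in the proof of Lemma~\ref{0211-2} with the smoothing estimate of Lemma~\ref{0206-1}. Set $N(s):=\big((\nabla^{\perp}\Lambda^{-1}\theta(s))\cdot\nabla\big)\theta(s)$.

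\textbf{Step 1: bound on the nonlinear term.} The Bony decomposition argument in the proof of Lemma~\ref{0211-2} controls every paraproduct term and the remainder in $B^{-\alpha}_{\frac{2}{\alpha-1},2}$. The tools are Lemma~\ref{0205-5}~\eqref{0212-9}, Lemma~\ref{0205-7}~\eqref{0409-3}, the low-frequency terms, and Lemma~\ref{0205-9}. This gives, for a.e. $s\in(0,T)$,
\begin{equation*}
{\|N(s)\|}_{B^{-\alpha}_{\frac{2}{\alpha-1},2}} \leq C{\|\theta(s)\|}^2_{B^{-\frac12(\alpha-1)}_{\frac{4}{\alpha-1},2}} .
\end{equation*}
We reuse this estimate verbatim. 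If we want the target index $q=1$ on the input side, we instead rerun the same argument with a slightly lowered regularity, $-\alpha-\delta$ with $\delta>0$ small, so that the output can be placed in $B^{-\alpha-\delta}_{\frac{2}{\alpha-1},\infty}$. The room comes from $-\alpha+1-\frac12(\alpha-1)\cdot 2$, with the $l^2\cdot l^2\to l^1$ Hölder pairing available in the paraproducts.

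\textbf{Step 2: smoothing, with the $q$-index handled.} Lemma~\ref{0206-1} keeps the summability index fixed, so we do not apply it directly from $q=2$ to $q=1$. Instead we work block by block. For $j\in\mathbb{N}$, Lemma~\ref{0206-2} gives
\begin{equation*}
2^{-\varepsilon j}{\|\phi_j*e^{-(t-s)\Lambda^\alpha}N(s)\|}_{L^{\frac{2}{\alpha-1}}}\leq C\,2^{(\alpha-\varepsilon)j}e^{-c(t-s)2^{\alpha j}}\,2^{-\alpha j}{\|\phi_j*N(s)\|}_{L^{\frac{2}{\alpha-1}}}.
\end{equation*}
The low-frequency piece $\psi*$ is bounded trivially by $T\sup_s\|N(s)\|_{B^{-\alpha}}$.

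\textbf{Step 3: time integration and summation in $j$.} First integrate in $s\in(0,t)$ and bound $2^{-\alpha j}\|\phi_j*N(s)\|$ by $\sup_s\|N(s)\|_{B^{-\alpha}_{\frac{2}{\alpha-1},\infty}}$, which is controlled by the norm in Step~1. This leaves the time factor
\begin{equation*}
2^{(\alpha-\varepsilon)j}\int_0^t e^{-c(t-s)2^{\alpha j}}\,{\rm d}s\leq C\,2^{-\varepsilon j}.
\end{equation*}
The factor $2^{-\varepsilon j}$ is summable in $j$ precisely because $\varepsilon>0$, which produces the $l^1$ norm. The constraint $\varepsilon\leq\alpha$ ensures that for $j$ small the bound $\min\{T2^{(\alpha-\varepsilon)j},2^{-\varepsilon j}\}$ causes no trouble. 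Taking the supremum over $t\in(0,T)$ then yields the claimed $L^\infty(0,T;B^{-\varepsilon}_{\frac{2}{\alpha-1},1})$ bound with a constant $C_T$.

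\textbf{Main obstacle.} The delicate point is the passage from $l^2$ to $l^1$ summability. It is only possible because the gain $2^{-\varepsilon j}$ comes from integrating the heat kernel against an $L^\infty_s$ bound, instead of using an $L^2_t$ Young inequality as in Lemma~\ref{0211-2}. For that reason we must verify that Step~1 holds pointwise in $s$, uniformly, with $B^{-\alpha}_{\frac{2}{\alpha-1},\infty}$ as output. The remainder term is the part that needs checking: Lemma~\ref{0205-9} requires $-\alpha>-2$, which holds since $\alpha\leq 5/3$.
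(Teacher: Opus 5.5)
Your argument is correct, but it handles the summability index differently from the paper. The paper's one-line justification applies Lemma~\ref{0206-1} directly with $q=1$, $s_1=-\alpha$, $s_2=-\varepsilon$. The product estimates already place $N(s)$ in $B^{-\alpha}_{\frac{2}{\alpha-1},1}$ with no loss of regularity: in Lemma~\ref{0205-5}~\eqref{0212-9} and Lemma~\ref{0205-9} one may take $q_1=q_2=2$, giving $1/q=1/2+1/2=1$, and the regularities add up to $-\frac12(\alpha-1)-\frac12(\alpha-1)-1=-\alpha$. Then $\int_0^t\bigl(1+(t-s)^{-\frac{\alpha-\varepsilon}{\alpha}}\bigr)\,{\rm d}s\le C_T$ because $\varepsilon>0$, and the hypothesis $\varepsilon\le\alpha$ is exactly the requirement $s_1\le s_2$ in Lemma~\ref{0206-1}.

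So your concern that Lemma~\ref{0206-1} cannot be used is unfounded. The detour in Step~1 through regularity $-\alpha-\delta$ is unnecessary, and as written its ``room'' computation does not make sense; your main line does not depend on it.

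Your route works blockwise with Lemma~\ref{0206-2}. You obtain $\ell^1$ summability from the strict gain $2^{-\varepsilon j}$, which comes from integrating the heat factor against $\sup_s 2^{-\alpha j}\|\phi_j*N(s)\|_{L^{\frac{2}{\alpha-1}}}$. This requires only $N\in L^\infty(0,T;B^{-\alpha}_{\frac{2}{\alpha-1},\infty})$, so it is more robust. It is precisely the mechanism the paper itself uses in Lemma~\ref{0410-1}, where the input has $q=\infty$. It also never uses $\varepsilon\le\alpha$. Your remark about $\min\{T2^{(\alpha-\varepsilon)j},2^{-\varepsilon j}\}$ is superfluous, since for $j\in\mathbb{N}$ the bound $C2^{-\varepsilon j}$ holds for every $\varepsilon>0$.

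The paper's version is shorter because it invokes the packaged smoothing lemma. The price is that the nonlinearity must already be controlled in the $q=1$ space.
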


We now prove Proposition~\ref{0211-1}. 
\begin{proof}[Proof of Proposition~\ref{0211-1}]
  By a similar argument to Proposition~\ref{0204-3}, we estimate 
  \begin{equation*}
    {\left\|\int_{0}^{t}e^{-(t-s)\Lambda^{\alpha}}\Big(\big((\nabla^{\perp}\Lambda^{-1}(\theta - \bar{\theta}))\cdot \nabla\big)\theta + \big((\nabla^{\perp}\Lambda^{-1}\bar{\theta})\cdot \nabla\big)(\theta - \bar{\theta})\Big)~{\rm d}s\right\|}_{L^{\infty}(0,T; \dot{H}^{-\frac{1}{2}})}, 
  \end{equation*}
  and 
  \begin{equation*}
    {\left\|\int_{0}^{t}e^{-(t-s)\Lambda^{\alpha}}\Big(\big((\nabla^{\perp}\Lambda^{-1}(\theta - \bar{\theta}))\cdot \nabla\big)\theta + \big((\nabla^{\perp}\Lambda^{-1}\bar{\theta})\cdot \nabla\big)(\theta - \bar{\theta})\Big)~{\rm d}s\right\|}_{L^{2}(0,T; \dot{H}^{\frac{\alpha}{2} - \frac{1}{2}})}. 
  \end{equation*} 
  Note that we can write 
  \begin{equation*}
    {\|f\|}_{\dot{H}^{-\frac{1}{2}}} 
    = {\|\Lambda^{-\frac{1}{2}}f\|}_{L^{2}} 
    = {\|\Lambda^{\frac{1}{2}}\Lambda^{-1}f\|}_{L^{2}}. 
  \end{equation*}
  By $H^{s}\hookrightarrow \dot{H}^{s}$ $(s>0)$, $H^{s} = B_{2,2}^{s}$ and Lemma~\ref{0206-1}, we have for any $-\alpha/2 <  \varepsilon < \alpha/2$, 
  \begin{align*}
    &{\left\|\int_{0}^{t}e^{-(t-s)\Lambda^{\alpha}}\Big(\big((\nabla^{\perp}\Lambda^{-1}(\theta - \bar{\theta}))\cdot \nabla\big)\theta + \big((\nabla^{\perp}\Lambda^{-1}\bar{\theta})\cdot \nabla\big)(\theta - \bar{\theta})\Big)~{\rm d}s\right\|}_{L^{\infty}(0,T; \dot{H}^{-\frac{1}{2}})}\\ 
    \leq& C{\left\|\int_{0}^{t}e^{-(t-s)\Lambda^{\alpha}}\Lambda^{-1}\Big(\big((\nabla^{\perp}\Lambda^{-1}(\theta - \bar{\theta}))\cdot \nabla\big)\theta + \big((\nabla^{\perp}\Lambda^{-1}\bar{\theta})\cdot \nabla\big)(\theta - \bar{\theta})\Big)~{\rm d}s\right\|}_{L^{\infty}(0,T; B_{2,2}^{\frac{1}{2}})}\\ 
    \leq& C_{T}{\left\|\Lambda^{-1}\Big(\big((\nabla^{\perp}\Lambda^{-1}(\theta - \bar{\theta}))\cdot \nabla\big)\theta + \big((\nabla^{\perp}\Lambda^{-1}\bar{\theta})\cdot \nabla\big)(\theta - \bar{\theta})\Big)\right\|}_{L^{\infty}(0,T; B_{2,2}^{\frac{1}{2} - \frac{\alpha}{2}-\varepsilon})}. 
  \end{align*}
  Note that $\Lambda^{-1}$ has a singularity at the origin in the frequency space, 
  however, since $\nabla\cdot \nabla^{\perp} = 0$, we have 
  \begin{align*}
    {\left\|\Lambda^{-1}\left(\psi*\big(\big((\nabla^{\perp}\Lambda^{-1}f)\cdot\nabla\big)g\big)\right)\right\|}_{L^{2}} 
    =& {\left\|\nabla\Lambda^{-1}\left(\psi*\big((\nabla^{\perp}\Lambda^{-1}f)g\big)\right)\right\|}_{L^{2}}\\ 
    \leq& C {\left\|\psi*\big((\nabla^{\perp}\Lambda^{-1}f)g\big)\right\|}_{L^{2}}. 
  \end{align*}
  Also, using $H^{s}\hookrightarrow \dot{H}^{s}$ $(s>0)$, Minkowski's inequality and Lemma~\ref{0206-2}, we obtain 
  \begin{align*}
    &{\left\|\int_{0}^{t}e^{-(t-s)\Lambda^{\alpha}}\Big(\big((\nabla^{\perp}\Lambda^{-1}(\theta - \bar{\theta}))\cdot \nabla\big)\theta + \big((\nabla^{\perp}\Lambda^{-1}\bar{\theta})\cdot \nabla\big)(\theta - \bar{\theta})\Big)~{\rm d}s\right\|}_{L^{2}(0,T; \dot{H}^{\frac{\alpha}{2} - \frac{1}{2}})}\\ 
    \leq& C_{T}{\left\|\Big(\big((\nabla^{\perp}\Lambda^{-1}(\theta - \bar{\theta}))\cdot \nabla\big)\theta + \big((\nabla^{\perp}\Lambda^{-1}\bar{\theta})\cdot \nabla\big)(\theta - \bar{\theta})\Big)\right\|}_{L^{2}(0,T; H^{- \frac{\alpha}{2}- \frac{1}{2}})}. 
  \end{align*}
  Note that we can write 
  \begin{align*}
    &\big((\nabla^{\perp}\Lambda^{-1}(\theta - \bar{\theta}))\cdot \nabla\big)\theta + \big((\nabla^{\perp}\Lambda^{-1}\bar{\theta})\cdot \nabla\big)(\theta - \bar{\theta})\\ 
    =& \frac{1}{2}\Big(\big((\nabla^{\perp}\Lambda^{-1}(\theta - \bar{\theta}))\cdot \nabla\big)\theta + \big((\nabla^{\perp}\Lambda^{-1}\theta)\cdot \nabla\big)(\theta - \bar{\theta})\\ 
     &+ \big((\nabla^{\perp}\Lambda^{-1}(\theta - \bar{\theta}))\cdot \nabla\big)\bar{\theta} + \big((\nabla^{\perp}\Lambda^{-1}\bar{\theta})\cdot \nabla\big)(\theta - \bar{\theta})\Big). 
  \end{align*}
  We fix $\varepsilon$ to satisfy $-1/2 - \alpha/2 - \varepsilon > -2$. 
  We show that 
  \begin{equation}\label{0406-3}
    \theta - \bar{\theta}\in L^{\infty}(0,T; B_{\frac{4}{3-\alpha}, 2}^{-\varepsilon}) \cap L^2(0,T; L^{\frac{4}{3-\alpha}}). 
  \end{equation}
  Here 
  \begin{equation*}
    \frac{1}{2} = \frac{1}{\frac{4}{\alpha - 1}} + \frac{1}{\frac{4}{3 - \alpha}}. 
  \end{equation*}
  If \eqref{0406-3} holds, 
  then using product estimate (Lemma~\ref{0205-5} and Lemma~\ref{0205-9}), we obtain 
  \begin{align*}
    &{\left\|\Lambda^{-1}\Big(\big((\nabla^{\perp}\Lambda^{-1}(\theta - \bar{\theta}))\cdot \nabla\big)\theta + \big((\nabla^{\perp}\Lambda^{-1}\bar{\theta})\cdot \nabla\big)(\theta - \bar{\theta})\Big)\right\|}_{L^{\infty}(0,T; B_{2,2}^{\frac{1}{2} - \frac{\alpha}{2}-\varepsilon})}. \\ 
    \leq& C ({\|\theta\|}_{L^\infty(0,T; B_{\frac{4}{\alpha-1},2}^{-\frac{1}{2}\left(\alpha - 1\right)})} + {\|\bar{\theta}\|}_{L^\infty(0,T; B_{\frac{4}{\alpha-1},2}^{-\frac{1}{2}\left(\alpha - 1\right)})})
    {\|\theta - \bar{\theta}\|}_{L^{\infty}(0,T; B_{\frac{4}{3 - \alpha}, 2}^{-\varepsilon})}, 
  \end{align*}
  and 
  \begin{align*}
    &{\left\|\Big(\big((\nabla^{\perp}\Lambda^{-1}(\theta - \bar{\theta}))\cdot \nabla\big)\theta + \big((\nabla^{\perp}\Lambda^{-1}\bar{\theta})\cdot \nabla\big)(\theta - \bar{\theta})\Big)\right\|}_{L^{2}(0,T; H^{- \frac{\alpha}{2}- \frac{1}{2}})}\\ 
    \leq& C ({\|\theta\|}_{L^\infty(0,T; B_{\frac{4}{\alpha-1},2}^{-\frac{1}{2}\left(\alpha - 1\right)})} + {\|\bar{\theta}\|}_{L^\infty(0,T; B_{\frac{4}{\alpha-1},2}^{-\frac{1}{2}\left(\alpha - 1\right)})})
    {\|\theta - \bar{\theta}\|}_{L^2(0,T; L^{\frac{4}{3-\alpha}})}. 
  \end{align*}
  If $\alpha=\frac{5}{3}$, then $2/(\alpha-1) = 4/(3-\alpha) = 3$. 
  Thus, \eqref{0406-3} follows directly from Lemma~\ref{0211-2} and Lemma~\ref{0211-3}. 
  Let $1<\alpha<5/3$. 
  Since Lemma~\ref{0211-2} and Lemma~\ref{0211-3}, we get 
  \begin{equation*}
    \theta-\bar{\theta}\in L^{\infty}(0,T; B_{\frac{2}{\alpha-1}, 1}^{-\varepsilon_1})\cap L^{2}(0,T; L^{\frac{2}{\alpha-1}}), 
  \end{equation*}
  and 
  \begin{equation*}
    \theta, \bar{\theta}
    \in L^{\infty}(0,T; B_{\frac{4}{\alpha-1}, 2}^{-\frac{1}{2}(\alpha-1)}) 
    + L^{\infty}(0,T; B_{\frac{2}{\alpha-1}, 1}^{-\varepsilon_1})\cap L^{2}(0,T; L^{\frac{2}{\alpha-1}}), 
  \end{equation*}
  where $0<\varepsilon_1\leq \alpha$. 
  We choose $\varepsilon_{1}$ sufficiently small so that $\varepsilon_1<1/2$. 
  Then, thanks to $-2\varepsilon_1-1>-2$, using a similar estimate to Lemma~\ref{0211-3}, we obtain 
  \begin{align*}
    &\int_{0}^{t}e^{-(t-s)\Lambda^{\alpha}}\Big(\big((\nabla^{\perp}\Lambda^{-1}(\theta - \bar{\theta}))\cdot \nabla\big)\theta + \big((\nabla^{\perp}\Lambda^{-1}\bar{\theta})\cdot \nabla\big)(\theta - \bar{\theta})\Big)~{\rm d}s\\ 
    \in& L^\infty (0,T ; B_{\frac{4}{3\alpha-3}, 1}^{-\varepsilon_2})\cap L^{2}(0,T ; L^{\frac{4}{3\alpha -3}})
     + L^\infty (0,T ; B_{\frac{1}{\alpha-1}, 1}^{-\varepsilon_3})\cap L^{2}(0,T; L^{\frac{1}{\alpha-1}}) , 
  \end{align*}
  where 
  \begin{equation*}
    \varepsilon_1-\frac{\alpha}{2}+\frac{1}{2} 
    < \varepsilon_2 
    < \varepsilon_1 + \frac{\alpha}{2} + \frac{1}{2} \quad\text{and}\quad 
    2\varepsilon_1 + 1 - \alpha 
    < \varepsilon_3 
    \leq 2\varepsilon_1 + 1. 
  \end{equation*}
  Note that we may take $\varepsilon_2$ and $\varepsilon_3$ to be negative. 
  Moreover, we can write 
  \begin{align*}
    &\big((\nabla^{\perp}\Lambda^{-1}\theta)\cdot\nabla\big)\theta\\ 
    =& \big((\nabla^{\perp}\Lambda^{-1}e^{-t\Lambda^{\alpha}}\theta)\cdot\nabla\big)e^{-t\Lambda^{\alpha}}\theta\\ 
     &+ \big((\nabla^{\perp}\Lambda^{-1}e^{-t\Lambda^{\alpha}}\theta)\cdot\nabla\big)(\theta - e^{-t\Lambda^{\alpha}}\theta )
     + \big((\nabla^{\perp}\Lambda^{-1}(\theta - e^{-t\Lambda^{\alpha}}\theta ))\cdot\nabla\big)e^{-t\Lambda^{\alpha}}\theta\\ 
     &+ \big((\nabla^{\perp}\Lambda^{-1}(\theta - e^{-t\Lambda^{\alpha}}\theta ))\cdot\nabla\big)(\theta - e^{-t\Lambda^{\alpha}}\theta), 
  \end{align*}
  which implies we can also apply Lemma~\ref{0205-9} to $\theta$ and $\bar{\theta}$. 
  Therefore, we may iterate this argument and we obtain 
  \begin{equation*}
    \theta - \bar{\theta}\in L^{\infty}(0,T; B_{\frac{3}{4 - \alpha}, 2}^{-\varepsilon}) \cap L^2(0,T; L^{\frac{4}{3-\alpha}}). 
  \end{equation*}
\end{proof}

The following Proposition will be used in the proof of Theorem~\ref{0212-8}. 
%%pがend-pointでないときに補間指数の制限を緩めてエネルギークラスに属すること%% 
\begin{prop}\label{0211-4}
  Let $1<\alpha\leq 5/3$, $T>0$ and $1\leq p\leq \infty$ with $2/(\alpha-1) \leq p <4/(\alpha-1)$. 
  \begin{enumerate}
    \item If $\alpha=5/3$ and 
    $\theta, \bar{\theta}\in L^{\infty}(0,T; B_{p, p}^{- \frac{2}{3} + \frac{2}{p}})$ be solutions of the integral equation of \eqref{SQG} in the sense of Definition~\ref{0215-1} with same initial data $\theta_0\in B_{p, p}^{- \frac{2}{3} + \frac{2}{p}}$, 
    then we have 
    \begin{equation*}
      \theta - \bar{\theta} \in L^{\infty}(0,T; \dot{H}^{-\frac{1}{2}})\cap L^{2}(0,T; \dot{H}^{\frac{1}{3}}). 
    \end{equation*}
    \item \label{0417-2}If $1<\alpha<5/3$ and 
    $\theta, \bar{\theta}\in L^{\infty}(0,T; B_{p, \infty}^{1 - \alpha + \frac{2}{p}})$ be solutions of the integral equation of \eqref{SQG} in the sense of Definition~\ref{0215-1} with same initial data $\theta_0\in B_{p, \infty}^{1 - \alpha + \frac{2}{p}}$, 
    then we have 
    \begin{equation*}
      \theta - \bar{\theta} \in L^{\infty}(0,T; \dot{H}^{-\frac{1}{2}})\cap L^{2}(0,T; \dot{H}^{\frac{\alpha}{2} - \frac{1}{2}}). 
    \end{equation*}
  \end{enumerate}
\end{prop}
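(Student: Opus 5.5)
The plan is to reduce Proposition~\ref{0211-4} to the two mechanisms already in place: the endpoint Besov argument of Proposition~\ref{0211-1} and the Lebesgue iteration of Proposition~\ref{0204-3}. First I would embed the hypothesis class into the class $B_{\frac{4}{\alpha-1},q}^{-\frac12(\alpha-1)}$ used there. Since $p<4/(\alpha-1)$, Lemma~\ref{0205-6} with $p_1=p$, $p_2=4/(\alpha-1)$ gives
\begin{equation*}
B_{p,r}^{1-\alpha+\frac{2}{p}}\hookrightarrow B_{\frac{4}{\alpha-1},r}^{-\frac12(\alpha-1)} .
\end{equation*}
This holds for any $r$, because $1-\alpha+\frac2p-2\left(\frac1p-\frac{\alpha-1}{4}\right)=-\frac12(\alpha-1)$. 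The only obstruction to quoting Proposition~\ref{0211-1} verbatim is the third index. That proposition needs $r=2$, whereas the hypotheses give $r=p$ in case (1) and $r=\infty$ in case (2).

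In case (1), $\alpha=5/3$ and $p\ge 3$, so $r=p\ge2$ and the embedding lands in $B_{6,p}^{-1/3}$, not $B_{6,2}^{-1/3}$. I would therefore not embed at $p_2=4/(\alpha-1)=6$. Instead I would rerun Lemma~\ref{0211-2} and Lemma~\ref{0211-3} with the product estimates applied at the level $B_{p,p}^{-\frac23+\frac2p}$ directly.
\begin{itemize}
\item The paraproduct parts use Lemma~\ref{0205-5}, estimate \eqref{0212-9}, with H\"older in $q$: $1/q=1/p+1/p$.
\item The resonant part uses Lemma~\ref{0205-9}, taking $s=-\alpha$ and $s'=-\frac23+\frac2p$. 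This puts the Duhamel term in $L^2(0,T;B^0_{p/2,p/2})$.
\item Since $p/2\le 2$ exactly when $p\le 4$, I would expect to combine this with Sobolev embedding into $L^3=L^{2/(\alpha-1)}$, using $p<6$ to lose no derivatives beyond what the exponential factor $e^{-ct2^{\alpha j}}$ in Lemma~\ref{0206-2} absorbs.
\end{itemize}
With $\theta-\bar\theta\in L^\infty(0,T;B^{-\varepsilon}_{3,1})\cap L^2(0,T;L^3)$, I would conclude as in the $\alpha=5/3$ line of Proposition~\ref{0211-1}, since there $4/(3-\alpha)=3$.

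In case (2), $\alpha<5/3$, and I would exploit the slack $5/3-\alpha>0$, exactly as Lemma~\ref{0206-4} exploits non-endpoint integrability in time. The $l^\infty$ summability in $j$ costs only a factor $\sum_j 2^{-\delta j}$ for some $\delta>0$ once the target regularity is lowered slightly. Concretely:
\begin{itemize}
\item I would first use Lemma~\ref{0205-6} to pass to $B^{-\frac12(\alpha-1)}_{\frac{4}{\alpha-1},\infty}$.
\item I would then redo Lemma~\ref{0211-2} in the form $L^2(0,T;B^{-\eta}_{\frac{2}{\alpha-1},1})$ for small $\eta>0$. The time integral $\int_0^T (t^{-(\alpha-\eta)/\alpha})^2\,dt$ is finite only if $\eta>\alpha/2$. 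To stay safe I would work in $L^\infty$ in time with Lemma~\ref{0206-1}, and obtain the $L^2$-in-time bound from the per-frequency Young inequality: the weight $2^{\alpha j}\|e^{-ct2^{\alpha j}}\|_{L^1}$ is bounded, and the $l^\infty$ loss is compensated by a strictly positive gap of derivatives.
\item That gap comes from $5/3-\alpha$ via the exponent $-\frac{5}{2\alpha}(\alpha-1)>-1$.
\end{itemize}
Lemma~\ref{0211-3} holds with $q=\infty$ on the input, since its output index $1$ is attained thanks to the $-\varepsilon$ loss. After this I would run the iteration of Proposition~\ref{0211-1}, raising integrability step by step to reach \eqref{0406-3}, and then apply the same final product estimates.

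The main obstacle is the endpoint $\alpha=5/3$ in case (1), where no derivative gap is available. I expect the restriction $q=p$ to be exactly what makes the $l^q$ summation close when the resonant term is placed in $L^2$ in time and $l^2$ in frequency. I would check the index bookkeeping $1/2\ge 2/p$ versus $p<6$ carefully there, and, if needed, split $p\le4$ and $4<p<6$, handling the latter by first interpolating to $B^{0}_{3,3}$.
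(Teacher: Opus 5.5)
\textbf{Case (2).} The main gap is here. You embed $B^{1-\alpha+\frac{2}{p}}_{p,\infty}$ into the endpoint space $B^{-\frac12(\alpha-1)}_{\frac{4}{\alpha-1},\infty}$ and count on the gap $\frac53-\alpha$ to pay for the $l^\infty$ summation. That embedding discards exactly the margin the hypothesis provides, namely $1-2\alpha+\frac4p>-\alpha$, i.e.\ $p<\frac{4}{\alpha-1}$. This is what the paper's Lemma~\ref{0410-1} exploits: it works at integrability $\frac p2$ and produces $L^\infty(0,T;B^{s}_{\frac p2,1})$ for every $s<1-\alpha+\frac4p$.

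Whatever margin $\frac53-\alpha$ gives you lands on $w=\theta-\bar\theta$, whereas the obstruction sits in $\theta$. To reach $L^2(0,T;\dot H^{\frac\alpha2-\frac12})$, the Duhamel step needs the forcing in $L^2(0,T;H^{-\frac\alpha2-\frac12})$, i.e.\ with $l^2$ summation in frequency. In $((\nabla^\perp\Lambda^{-1}w)\cdot\nabla)\theta$, the part with $w$ at low and $\theta$ at high frequencies has exactly the summability of $\theta$. For instance, take $\theta=\sum_j2^{\frac{\alpha-1}{2}j}\eta(x)\cos(2^jx_1)$ with $\widehat\eta$ compactly supported. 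It lies in $B^{-\frac12(\alpha-1)}_{\frac4{\alpha-1},\infty}$, yet $((\nabla^\perp\Lambda^{-1}w)\cdot\nabla)\theta\notin H^{-\frac\alpha2-\frac12}$ for a fixed Schwartz $w$ with compact Fourier support, however regular $w$ is. So the final product estimates of Proposition~\ref{0211-1}, which use $\theta$ in the $l^2$ space $B^{-\frac12(\alpha-1)}_{\frac4{\alpha-1},2}$, are not available after your embedding.

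If you keep $\theta\in B^{1-\alpha+\frac2p}_{p,\infty}$, the situation changes. The term with $\nabla^\perp\Lambda^{-1}w$ at frequencies below $2^{l-1}$ and $\theta$ at frequency $2^l$, placed in $L^2$ via $L^r\times L^p$ with $\frac1r=\frac12-\frac1p$, carries the factor $2^{\frac{1-\alpha}{2}l}2^{(\alpha-1-\frac2p)l}=2^{-(\frac2p-\frac{\alpha-1}{2})l}$. This is summable precisely because $p<\frac4{\alpha-1}$. Also, the exponent $-\frac{5}{2\alpha}(\alpha-1)$ you cite comes from $L^{\frac1{\alpha-1}}\hookrightarrow H^{3-2\alpha}$ in Lemmas~\ref{0206-3}--\ref{0206-4}, which needs the product at integrability at most $2$. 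At your endpoint the product lives at integrability $\frac2{\alpha-1}>2$, so that mechanism does not transfer.

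\textbf{Case (1).} There are two further problems here. First, taking $s=-\alpha$ in Lemma~\ref{0205-9} throws away regularity. A Duhamel term in $B^0_{\frac p2,\frac p2}$ at integrability $\frac p2<3$ cannot be put in $L^3$, since Lemma~\ref{0205-6} costs $\frac4p-\frac23>0$ derivatives. The sharp choice is $s=-\frac73+\frac4p$ (admissible, since it exceeds $-2$), which gives regularity $-\frac23+\frac4p$ and, after Lemma~\ref{0205-6}, lands in $B^0_{3,\frac p2}$. Lemma~\ref{0205-8} puts this in $L^3$ only for $p\le4$. For $p>4$ you also cannot pass from $l^{\frac p2}_jL^2_t$, which is what the per-frequency Young inequality gives, to $L^2_tl^{\frac p2}_j$. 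This is why the paper's lemma for $\alpha=\frac53$ measures the Duhamel term in $L^p(0,T;B^{-\frac23+\frac4p}_{\frac p2,\frac p2})$ before iterating: the time exponent $p\ge\frac p2$ makes Minkowski's inequality applicable. Your fallback through $B^0_{3,3}$ cannot work, since $B^0_{3,3}\not\subset L^3$.

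Second, closing ``as in Proposition~\ref{0211-1}'' uses $\theta\in B^{-\frac13}_{6,2}$, while you only have $B^{-\frac13}_{6,p}$ with $p\ge3$. The same wave-packet example, with amplitudes $2^{\frac j3}c_j$ and $c\in l^p\setminus l^2$, shows that the $L^2(0,T;H^{-\frac43})$ bound then fails. That bound has to be proved with $\theta\in B^{-\frac23+\frac2p}_{p,p}$ itself, e.g.\ via $L^3\hookrightarrow B^0_{3,3}$, Bernstein with the positive exponent $\frac2p-\frac13$, and $l^3\cdot l^p\subset l^2$ for $p\le 6$. This, rather than an $l^2$ bound on $\theta$, is where $q=p$ and $p<6$ genuinely enter.
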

Proposition~\ref{0211-4} can be proved by a similar argument to Proposition~\ref{0211-1}. 
We state the lemmas required for the proof and omit the proof of Proposition~\ref{0211-4}. 

%%$\alpha=5/3&のときの補完指数も含めた評価%%
The following lemma can be proved by the same method as in Lemma~\ref{0211-2}. 
\begin{lem}
  Let $T>0$ and $3\leq p \leq 6$. 
  Then there exists a constant $C_{T}>0$ such that for any $\theta \in L^{\infty}(0,T; B_{p,p}^{-\frac{2}{3}+\frac{2}{p}})$, we have 
  \begin{equation*}
    {\left\|\int_{0}^{t}e^{-(t-s)\Lambda^{\frac{5}{3}}}\Big(\big((\nabla ^\perp \Lambda^{-1}\theta)\cdot\nabla\big)\theta\Big) ~{\rm d}s\right\|}_{L^{p}(0,T; B_{\frac{p}{2}, \frac{p}{2}}^{-\frac{2}{3} + \frac{4}{p}})} 
    \leq C_{T}{\|\theta\|}_{L^\infty(0,T; B_{p,p}^{-\frac{2}{3}+\frac{2}{p}})}^2. 
  \end{equation*}
\end{lem}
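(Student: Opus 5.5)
We follow the proof of Lemma~\ref{0211-2}, with $\alpha=5/3$. The target space is $L^p(0,T;B^{-\frac23+\frac4p}_{\frac p2,\frac p2})$ and the data space is $L^\infty(0,T;B^{-\frac23+\frac2p}_{p,p})$.

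Write $F=\big((\nabla^\perp\Lambda^{-1}\theta)\cdot\nabla\big)\theta$. Split the Besov norm into its low-frequency piece $\psi*\cdot$ and its dyadic pieces $\phi_j*\cdot$, $j\in\mathbb N$.

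\emph{Low frequencies.} Since $e^{-(t-s)\Lambda^{5/3}}$ is bounded on $L^{p/2}$, Minkowski's inequality bounds this piece by $T\,\|\psi*F\|_{L^\infty(0,T;L^{p/2})}$.

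\emph{Dyadic pieces.} For each $j$, Lemma~\ref{0206-2} gives the pointwise-in-time bound
\begin{equation*}
\|\phi_j*(\text{Duhamel})(t)\|_{L^{p/2}}\le C\int_0^t e^{-c(t-s)2^{\frac53 j}}\|\phi_j*F(s)\|_{L^{p/2}}\,{\rm d}s .
\end{equation*}
Take the $L^p(0,T)$ norm in time and apply Young's inequality with the kernel in $L^1(0,T)$. Since $\|e^{-ct2^{\frac53 j}}\|_{L^1}\le C2^{-\frac53 j}$, we obtain
\begin{equation*}
2^{(-\frac23+\frac4p)j}\|\phi_j*(\cdot)\|_{L^p(0,T;L^{p/2})}\le C\,2^{(-\frac73+\frac4p)j}\|\phi_j*F\|_{L^p(0,T;L^{p/2})}.
\end{equation*}
Taking the $\ell^{p/2}$ sum, and using that $L^p$ in time dominates after Minkowski (since $p\ge p/2$ in the $\ell$-sum, with the order swapped appropriately or $L^\infty$ in time), reduces everything to the product estimate
\begin{equation*}
\|F\|_{B^{-\frac73+\frac4p}_{\frac p2,\frac p2}}\le C\|\theta\|^2_{B^{-\frac23+\frac2p}_{p,p}} .
\end{equation*}
The time integrability is then trivial, because $\theta\in L^\infty$ in time and $T<\infty$.

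\emph{Product estimate.} Apply Bony's decomposition exactly as in Lemma~\ref{0211-2}. Set $s_0=-\frac23+\frac2p$; for $3\le p\le6$ we have $s_0\in[-\frac13,0]$.

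For the paraproduct terms, the low-frequency factor is either $\nabla^\perp\Lambda^{-1}\theta$, of regularity $s_0$, or $\nabla\theta$ localized at low frequency. Lemma~\ref{0205-5}\,\eqref{0212-9} (with $\varepsilon=-s_0$ when $s_0<0$, or \eqref{0212-6} together with $B^{0}_{p,p}\hookrightarrow$ a suitable $L^p$-type bound when $s_0=0$) combined with Lemma~\ref{0205-7}\,\eqref{0409-3} gives regularity $s_0+(s_0-1)=2s_0-1$. The Hölder exponents are $\frac2p=\frac1p+\frac1p$ and $\frac2p=\frac1p+\frac1p$ in the $\ell$-index. Since $2s_0-1=-\frac73+\frac4p$, this is exactly the required regularity. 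The $\psi$ and $\phi_1$ terms are harmless.

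For the resonant term $\sum_{|k-l|\le1}$, we symmetrize as in Lemma~\ref{0211-2} and use Lemma~\ref{0205-9} with $s=2s_0-1$, $s'=s_0$ and $s+1-s'=s_0$. This requires $s=-\frac73+\frac4p>-2$, i.e.\ $p<12$, which holds on $3\le p\le6$.

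\emph{Main obstacle.} The delicate point is the endpoint bookkeeping.
\begin{itemize}
\item The $\ell^{p/2}$ summability in the paraproduct with $\ell^p\times\ell^p$ data: the $q$-Hölder in \eqref{0212-9} handles this, whereas \eqref{0212-6} alone would lose it.
\item The case $p=3$, where $s_0=0$ and \eqref{0212-9} with $\varepsilon>0$ is unavailable. There I would use instead $B^0_{3,3}\hookrightarrow B^{-\delta}_{3,3}$ for the low factor and absorb $\delta$ into the high factor, or fall back on Lemma~\ref{0205-5}\,\eqref{0212-6} applied to $\nabla^\perp\Lambda^{-1}\theta$ in a suitable Lebesgue space.
\end{itemize}
I expect this endpoint handling to be the only step needing care.
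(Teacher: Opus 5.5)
You are following exactly the route the paper indicates (``the same method as in Lemma~\ref{0211-2}''). Here, however, that route has a genuine gap at the time-integration step, and a second one at $p=3$. After Young's inequality block by block, you must control $\big\|\{2^{(-\frac73+\frac4p)j}\|\phi_j*F\|_{L^p(0,T;L^{p/2})}\}_j\big\|_{\ell^{p/2}}$, which is an $\ell^{p/2}_jL^p_t$ norm of $F$. The pointwise-in-time product estimate only gives $F\in L^\infty(0,T;B^{-\frac73+\frac4p}_{\frac p2,\frac p2})$, i.e.\ $L^\infty_t\ell^{p/2}_j$ control. Because $p>p/2$, Minkowski gives only $L^p_t\ell^{p/2}_j\le\ell^{p/2}_jL^p_t$. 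You use that direction correctly on the target side, but on the data side you need the reverse, which is false. Take $b_j=\mathbf{1}_{I_j}(t)$ with disjoint $I_j\subset(0,T)$ and $|I_j|\sim j^{-2}$. Then $\sup_t\|b(t)\|_{\ell^{p/2}}=1$, but $\sum_j\|b_j\|_{L^p_t}^{p/2}\sim\sum_j j^{-1}=\infty$. In Lemma~\ref{0211-2} this problem never arises, because the time exponent and the summability index are both $2$ and Fubini closes the estimate. Here the block-by-block argument yields only $L^p(0,T;B^{-\frac23+\frac4p}_{\frac p2,p})$ or $L^{\frac p2}(0,T;B^{-\frac23+\frac4p}_{\frac p2,\frac p2})$. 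To reach the stated space you need a true maximal-regularity estimate in $L^p(0,T;B^{\sigma}_{r,r})$ with $r=p/2\neq p$. One way: use the bound $|\phi_j*U(t,x)|\lesssim 2^{-\frac53 j}M_tM_x(\phi_j*F)(t,x)$, where $U$ is the Duhamel term and $M_t,M_x$ are Hardy--Littlewood maximal operators, together with the boundedness of $M_t$ on $L^p(\mathbb{R};L^r(\mathbb{R}^2\times\mathbb{N}))$. Another: Weis's theorem on the UMD space $B^{\sigma}_{r,r}$.

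At the endpoint $p=3$, neither of your proposed fixes works. The factor $\nabla\theta\in B^{-1}_{3,3}$ has no spare regularity to absorb a loss $\delta$. Also, \eqref{0212-6} keeps the summability index of the high-frequency factor, so it gives $\ell^{3}$ rather than $\ell^{3/2}$ (and $B^0_{3,3}\not\hookrightarrow L^3$ in any case). In fact the bound $\|F\|_{B^{-1}_{3/2,3/2}}\lesssim\|\theta\|_{B^0_{3,3}}^2$ is false. Let $\theta=\theta_L+\theta_H$ with $\theta_H=\sum_{J\ge J_0}d_J\,\chi(x)\cos(2^Jx_1)$. Here $\theta_L$ and $\chi$ are fixed with compact Fourier support (near $|\xi|\sim1$ and near $0$ respectively), chosen so that $(\nabla^\perp\Lambda^{-1}\theta_L)_1\chi\not\equiv0$, and $d\in\ell^3\setminus\ell^{3/2}$. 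Then $\theta\in B^0_{3,3}$. The low--high term $\big((\nabla^\perp\Lambda^{-1}\theta_L)\cdot\nabla\big)\theta_H$ gives $2^{-J}\|\phi_J*F\|_{L^{3/2}}\gtrsim d_J$, and all other interactions are of lower order once $J_0$ is large. Now take this $\theta$ constant in time. Then $\phi_j*U(t)=\Lambda^{-\frac53}(1-e^{-t\Lambda^{\frac53}})(\phi_j*F)$, whose $L^{3/2}$ norm is $\gtrsim 2^{-\frac53 j}\|\phi_j*F\|_{L^{3/2}}$ once $t2^{\frac53 j}\ge1$. Hence $\|U(t)\|_{B^{2/3}_{3/2,3/2}}=\infty$ for every $t>0$, so the estimate as stated fails at $p=3$. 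Your argument, supplemented by maximal regularity, can only cover $3<p\le6$, where $s_0<0$ and \eqref{0212-9} with $\varepsilon=-s_0$ applies.
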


%%$1<\alpha<5/3$のときに補完指数が自由に取れること%% 
In the proof of Proposition~\ref{0211-4}~\eqref{0417-2}, since the interpolation index is infinite, we use the next lemma instead of Lemma~\ref{0211-3}. 
\begin{lem}\label{0410-1}
  Let $1<\alpha<5/3$, $T>0$, $1\leq p \leq \infty$ with $2/(\alpha-1)< p < 4/(\alpha-1)$ and $s\in\mathbb{R}$ with $ s<1-\alpha+4/p$. 
  Then there exists a constant $C_{T}>0$ such that for any $\theta\in L^{\infty}(0,T; B_{p,\infty}^{1-\alpha + \frac{2}{p}})$, we have 
  \begin{equation*}
    {\left\|\int_{0}^{t}e^{-(t-s)\Lambda^{\alpha}}\Big(\big((\nabla ^\perp \Lambda^{-1}\theta)\cdot\nabla\big)\theta\Big) ~{\rm d}s\right\|}_{L^{\infty}(0,T; B_{\frac{p}{2}, 1}^{s})} 
    \leq C_{T}{\|\theta\|}_{L^\infty(0,T; B_{p,\infty}^{1-\alpha + \frac{2}{p}})}^2. 
  \end{equation*}
\end{lem}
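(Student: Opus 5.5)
The proof will follow the scheme of Lemma~\ref{0211-2} and Lemma~\ref{0211-3}. We first move the derivative outside the product, writing $\big((\nabla^{\perp}\Lambda^{-1}\theta)\cdot\nabla\big)\theta = \nabla\cdot\big((\nabla^{\perp}\Lambda^{-1}\theta)\theta\big)$. Then we estimate the nonlinear term $F:=\big((\nabla^{\perp}\Lambda^{-1}\theta)\cdot\nabla\big)\theta$ in a Besov space of lower regularity, and regain regularity from the semigroup.

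\textbf{Semigroup step.} By Lemma~\ref{0206-1}, for $0<t\le T$,
\begin{equation*}
  {\big\|e^{-(t-s)\Lambda^{\alpha}}F(s)\big\|}_{B^{s}_{\frac p2,1}}
  \le C\big(1+(t-s)^{-\frac{1}{\alpha}(s-\sigma)}\big){\|F(s)\|}_{B^{\sigma}_{\frac p2,1}} .
\end{equation*}
This is integrable in $s$ provided $s-\sigma<\alpha$, and then we get an $L^\infty(0,T)$ bound with constant $C_T$. We will apply it with the choice
\begin{equation*}
  \sigma = 2\Big(1-\alpha+\frac2p\Big)-1+\frac{2}{p}\cdot 0 ,
\end{equation*}
i.e. $\sigma=1-2\alpha+4/p$, which is exactly the regularity of the product counted with one derivative. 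The condition $s-\sigma<\alpha$ is then $s<1-\alpha+4/p$, which is the hypothesis.

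\textbf{Product estimate.} Using Bony's decomposition exactly as in the proof of Lemma~\ref{0211-2}, we split $F$ into paraproduct pieces and the remainder.

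\emph{Paraproduct pieces.} These are the pieces with low frequency on $\nabla^\perp\Lambda^{-1}\theta$, with low frequency on $\nabla\theta$, and the terms containing $\psi$ or $\phi_1$. Put $s_0:=1-\alpha+2/p$. Since $p>2/(\alpha-1)$, we have $s_0<0$. Hence Lemma~\ref{0205-5} \eqref{0212-9} with $\varepsilon=-s_0$ applies, together with Lemma~\ref{0205-7}. For the first paraproduct we take $f=\nabla^\perp\Lambda^{-1}\theta\in B^{s_0}_{p,\infty}$ and $g=\nabla\theta\in B^{s_0-1}_{p,\infty}$. This gives the bound $C\|\theta\|_{B^{s_0}_{p,\infty}}^2$ in $B^{2s_0-1}_{p/2,\infty}$, and the second paraproduct is handled symmetrically. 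The low-frequency pieces are harmless.

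\emph{Remainder.} Here we use the symmetrization as in Lemma~\ref{0211-2} and then Lemma~\ref{0205-9} with $s=2s_0-1$ and $s'=s_0$. This requires $2s_0-1>-2$, i.e. $1-\alpha+2/p>-1/2$, which holds because $p<4/(\alpha-1)$.

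All pieces therefore land in $B^{\sigma}_{p/2,\infty}$ with $\sigma=2s_0-1$.

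\textbf{Main obstacle: the summability index.} The right-hand side gives only the index $q=\infty$, while the target space has index $1$. We plan to absorb this loss by giving up an arbitrarily small amount of regularity. We choose $\sigma'<\sigma$ with $s-\sigma'<\alpha$; this is possible because $s<\sigma+\alpha$ strictly. Then we use the embedding $B^{\sigma}_{p/2,\infty}\hookrightarrow B^{\sigma'}_{p/2,1}$, and afterwards apply Lemma~\ref{0206-1} in the $q=1$ scale.

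Alternatively, we can work dyadically with Lemma~\ref{0206-2}. For each block,
\begin{equation*}
  2^{sj}\int_0^t e^{-c(t-s)2^{\alpha j}}\,2^{-\sigma j}\,ds\;\lesssim\; 2^{(s-\sigma-\alpha)j},
\end{equation*}
and this is summable in $j$ exactly because $s<\sigma+\alpha$. This second route also makes transparent why the strict inequality on $s$ is needed. Collecting the estimates gives the stated bound with a constant $C_T$.
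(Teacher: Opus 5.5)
Your proposal is correct and follows essentially the paper's proof. It uses the same Bony splitting, with Lemma~\ref{0205-5}~\eqref{0212-9} for the paraproducts and Lemma~\ref{0205-9} for the symmetrized remainder, to put the nonlinearity in $B^{1-2\alpha+4/p}_{p/2,\infty}$, and it uses the strict inequality $s<1-\alpha+4/p$ to recover $B^{s}_{p/2,1}$. The only difference is how the summability index is handled: the paper first reduces to $s>1-2\alpha+4/p$, sums the dyadic bounds of Lemma~\ref{0206-2} to get the factor $(t-\tau)^{-\frac{1}{\alpha}(s-1+2\alpha-\frac{4}{p})}$, and then integrates in time. You instead either give up $\varepsilon$ of regularity before applying Lemma~\ref{0206-1}, or integrate each block in time and sum a geometric series; both are equally valid.
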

\begin{proof}
  Since embedding $B_{p,\infty}^{s_{1}} \hookrightarrow B_{p,1}^{s_{2}}$ $(s_{1} > s_{2})$ holds in non-homogeneous Besov spaces, 
  we only consider the case 
  \begin{equation*}
    1 - 2\alpha + \frac{4}{p} 
    < s 
    < 1 - \alpha + \frac{4}{p}. 
  \end{equation*}
  By the boundedness of the fractional heat kernel, we get 
  \begin{equation*}
    {\left\|\psi*\left(e^{-(t-s)\Lambda^{\alpha}}\Big(\big((\nabla ^\perp \Lambda^{-1}\theta)\cdot\nabla\big)\theta\Big)\right)\right\|}_{L^{\frac{p}{2}}} 
    \leq C {\left\|\psi*\Big(\big((\nabla ^\perp \Lambda^{-1}\theta)\cdot\nabla\big)\theta\Big)\right\|}_{L^{\frac{p}{2}}}
  \end{equation*}
  For each $j\in \mathbb{N}$, using Lemma~\ref{0206-2},  we have 
  \begin{align*}
    &2^{sj}{\left\|\phi_j*\left(e^{-(t-s)\Lambda^{\alpha}}\Big(\big((\nabla ^\perp \Lambda^{-1}\theta)\cdot\nabla\big)\theta\Big)\right)\right\|}_{L^{\frac{p}{2}}}\\ 
    \leq& C2^{\left(s-1+2\alpha - \frac{4}{p}\right)j}e^{-c(t-s)2^{\alpha j}}2^{\left(1-2\alpha + \frac{4}{p}\right)j}{\left\|\phi_j*\Big(\big((\nabla ^\perp \Lambda^{-1}\theta)\cdot\nabla\big)\theta\Big)\right\|}_{L^{\frac{p}{2}}}. 
  \end{align*}
  Since $s>1-2\alpha+4/p$, we get 
  \begin{align*}
    \sum_{j\in \mathbb{N}}2^{\left(s-1+2\alpha - \frac{4}{p}\right)j}e^{-c(t-s)2^{\alpha j}} 
    =& (t-s)^{-\frac{1}{\alpha}\left(s-1+2\alpha - \frac{4}{p}\right)}\sum_{j\in \mathbb{N}}\big((t-s)2^{\alpha j}\big)^{\frac{1}{\alpha}\left(s-1+2\alpha - \frac{4}{p}\right)}e^{-c(t-s)2^{\alpha j}}\\ 
    \leq& C (t-s)^{-\frac{1}{\alpha}\left(s-1+2\alpha - \frac{4}{p}\right)}
  \end{align*}
  Thanks to $s<1-\alpha+4/p$, using Young's inequality, we obtain 
  \begin{equation*}
    {\left\|\int_{0}^{t}e^{-(t-s)\Lambda^{\alpha}}\Big(\big((\nabla ^\perp \Lambda^{-1}\theta)\cdot\nabla\big)\theta\Big) ~{\rm d}s\right\|}_{L^{\infty}(0,T; B_{\frac{p}{2}, 1}^{s})} 
    \leq C_{T}{\left\|\big((\nabla ^\perp \Lambda^{-1}\theta)\cdot\nabla\big)\theta\right\|}_{L^{\infty}(0,T; B_{\frac{p}{2}, \infty}^{1-2\alpha + \frac{4}{p}})}. 
  \end{equation*}
  Since $-\alpha<1-2\alpha + 4/p<0$, by Lemma~\ref{0205-5} \eqref{0212-9} and Lemma~\ref{0205-9}, we have 
  \begin{equation*}
    {\left\|\big((\nabla ^\perp \Lambda^{-1}\theta)\cdot\nabla\big)\theta\right\|}_{B_{\frac{p}{2}, \infty}^{1-2\alpha + \frac{4}{p}}} 
    \leq C {\|\theta\|}_{B_{p,\infty}^{1-\alpha + \frac{2}{p}}}^2. 
  \end{equation*}
\end{proof}

In the case $\alpha = 1$, the following proposition holds. 
%%$\alpha = 1$, $p$有限のときにエネルギークラスに入ること%%
\begin{prop}\label{0407-1}
  Let $\alpha = 1$, $T>0$ and $2\leq p < \infty$. 
  Let $\theta, \bar{\theta} \in L^{\infty}(0,T; B_{p, \infty}^{\frac{2}{p}})$ be solutions of the integral equation of \eqref{SQG} in the sense of Definition~\ref{0215-1} with same initial data $\theta_0 \in B_{p, \infty}^{\frac{2}{p}}$. 
  Then we have 
  \begin{equation*}
    \theta - \bar{\theta} \in L^{\infty}(0,T; \dot{H}^{-\frac{1}{2}})\cap L^{2}(0,T; L^{2}). 
  \end{equation*}
\end{prop}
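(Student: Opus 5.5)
We follow the scheme of Proposition~\ref{0211-1}. Writing $w=\theta-\bar\theta$ and testing the integral equations against $\phi_j(x-\cdot)$ as in the proof of Proposition~\ref{0204-3}, we have
\begin{equation*}
  w(t)=\int_0^t e^{-(t-s)\Lambda}\Big(\big((\nabla^\perp\Lambda^{-1}w)\cdot\nabla\big)\theta+\big((\nabla^\perp\Lambda^{-1}\bar\theta)\cdot\nabla\big)w\Big)\,{\rm d}s ,
\end{equation*}
so it suffices to bound this Duhamel term in $L^\infty(0,T;\dot H^{-\frac12})$ and in $L^2(0,T;L^2)$. Here $\alpha=1$ and the target space is $\dot H^{\frac{\alpha}{2}-\frac12}=L^2$. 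The main difference from the subcritical case is that the heat semigroup gains only one derivative, whereas the nonlinearity costs one. Hence every estimate is borderline in time. On the other hand, the regularity index $2/p>0$ is positive, so products are well defined without Lemma~\ref{0205-9} subtleties.

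\textbf{Step 1 (first iteration).} We estimate the Duhamel term $\int_0^t e^{-(t-s)\Lambda}\nabla\cdot(({\bf u}\theta))\,{\rm d}s$ for $\theta\in L^\infty(0,T;B^{2/p}_{p,\infty})$.
\begin{itemize}
  \item Since $2/p>0$, Lemmas~\ref{0205-5} and~\ref{0212-7} give ${\bf u}\theta\in L^\infty(0,T;B^{2/p}_{p/2,\infty})$ when $p\ge 4$. When $p<4$, we use $B^{2/p}_{p,\infty}\hookrightarrow L^{q}$ with $q>p$ instead.
  \item The divergence costs one derivative. Applying Lemma~\ref{0206-2} blockwise and summing as in Lemma~\ref{0410-1}, we obtain, for every $s<2/p$ with some loss $\varepsilon$, that the Duhamel term lies in $L^\infty(0,T;B^{s}_{p/2,1})$.
\end{itemize}
The blockwise time integral $\int_0^t 2^{j}e^{-c(t-s)2^j}\,{\rm d}s\le C$ is uniform in $j$, so for the $L^2(0,T)$-type norms we use Young's inequality exactly as in Lemma~\ref{0206-5}, with no loss in $\ell^q$.

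\textbf{Step 2 (iteration).} As in Propositions~\ref{0204-3} and~\ref{0211-1}, we feed the improved integrability of $w$ back into the bilinear term.
\begin{itemize}
  \item We split $\theta,\bar\theta=e^{-t\Lambda}\theta_0+(\text{Duhamel})$. The first piece stays in $B^{2/p}_{p,\infty}$; the second gains the integrability $p/2$.
  \item Each step lowers the Lebesgue exponent of $w$, via $1/r_{k+1}=1/r_k+1/p$, while keeping the regularity just below $2/p$ (with $\varepsilon$-losses absorbed using $2/p>0$). The target is $1/r_k+1/p=1/2$.
  \item After finitely many steps we reach $w\in L^\infty(0,T;B^{-\varepsilon}_{r,2})\cap L^2(0,T;L^{r})$ with $1/r=1/2-1/p$, the analogue of \eqref{0406-3}. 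Interpolation adjusts the exponents if the iteration overshoots.
\end{itemize}

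\textbf{Step 3 (closing).} For $\dot H^{-\frac12}$ we write $\|f\|_{\dot H^{-1/2}}=\|\Lambda^{\frac12}\Lambda^{-1}f\|_{L^2}$. We handle low frequencies through the divergence form, as in the proof of Proposition~\ref{0211-1}, and apply Lemma~\ref{0206-1} with a gain of strictly less than one derivative, so the time singularity is integrable. For $L^2(0,T;L^2)$ we use the blockwise Young argument of Lemma~\ref{0206-5}. Both reduce to the product bound
\begin{equation*}
  \|({\bf u}_w)\theta+{\bf u}_{\bar\theta}w\|_{H^{-\varepsilon'}}\lesssim \big(\|\theta\|_{B^{2/p}_{p,\infty}}+\|\bar\theta\|_{B^{2/p}_{p,\infty}}\big)\,\|w\|_{B^{-\varepsilon}_{r,2}} ,
\end{equation*}
which follows from Lemmas~\ref{0205-5}, \ref{0212-7} and~\ref{0205-9}, since $2/p-\varepsilon>0$.

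\textbf{Main obstacle.} The $L^2(0,T;L^2)$ estimate in Step 3 is critical: it needs exactly one derivative of smoothing, so the Young argument with kernel $2^je^{-ct2^j}$ must be applied dyadically without any $\varepsilon$ slack. It therefore requires $\ell^2$-summability in $j$ of the product in $B^{0}_{2,2}$ tested against $L^2_t$. I would first try to obtain this from the strictly positive regularity $2/p$ of $\theta$: it absorbs the $\varepsilon$-loss of $w$ in the paraproduct where $w$ has high frequency. This leaves only the remainder term, which is controlled by Lemma~\ref{0212-7} since $2/p-\varepsilon>0$.
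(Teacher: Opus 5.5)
Your route is the paper's: the first-iteration bound for the Duhamel term in $L^\infty(0,T;B^{s}_{p/2,1})$, $s<2/p$ (the lemma stated after Proposition~\ref{0407-1}), then iteration on the integrability of $w=\theta-\bar\theta$ via $1/r_{k+1}=1/r_k+1/p$ up to the analogue of \eqref{0406-3}, then a closing product estimate. Your $L^\infty(0,T;\dot H^{-\frac12})$ closing is fine, since there the semigroup only has to gain $\frac12+\varepsilon'<1$ derivatives.

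\textbf{The gap is in the $L^2(0,T;L^2)$ closing.} Set $F=(\nabla^\perp\Lambda^{-1}w)\theta+(\nabla^\perp\Lambda^{-1}\bar\theta)w$. The blockwise Young argument gives exactly $\|w\|_{L^2(0,T;L^2)}\lesssim\|F\|_{L^2(0,T;L^2)}$, up to a harmless low-frequency term, and nothing more. So the $H^{-\varepsilon'}$ product bound to which you reduce ``both'' estimates only yields $w\in L^2(0,T;H^{-\varepsilon'})$.

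The rescue in your last paragraph does not close this. The regularity $2/p$ of $\theta$ absorbs the $\varepsilon$-loss of $w$ only in the remainder and in the paraproducts where $\theta$ or $\bar\theta$ carries the high frequency. Now take the paraproducts where $w$ carries the high frequency: $\sum_{k\le l-2}(\phi_k*\theta)(\phi_l*\nabla^\perp\Lambda^{-1}w)$ and $\sum_{k\le l-2}(\phi_k*\nabla^\perp\Lambda^{-1}\bar\theta)(\phi_l*w)$, with their $\psi$-pieces. There the coefficient enters only through its low-frequency part, whose $L^p$ norm is bounded but does not decay in $l$. So for $w\in B^{-\varepsilon}_{r,2}$ these terms lie only in $B^{-\varepsilon}_{2,2}$, and the claim that ``this leaves only the remainder term'' is not correct.

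\textbf{The repair is already in your Step~2.} Use the other component, $w\in L^2(0,T;L^r)$ with $1/r=1/2-1/p$. Bound the whole product by H\"older's inequality and the $L^q$-boundedness of Riesz transforms ($1<q<\infty$):
$\|F\|_{L^2}\lesssim(\|\theta\|_{L^p}+\|\bar\theta\|_{L^p})\|w\|_{L^r}$, where $\|\theta\|_{L^p}\lesssim\|\theta\|_{B^{2/p}_{p,\infty}}$ because $2/p>0$. This is exactly how the paper uses the $L^2(0,T;L^{\frac{4}{3-\alpha}})$ component of \eqref{0406-3}.

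Alternatively, keep the positive regularity $s\in(0,2/p)$ that your iteration actually produces instead of weakening it to $B^{-\varepsilon}_{r,2}$. Then $w\in L^\infty(0,T;B^{s}_{r,1})$ places the problematic paraproducts in $B^{s}_{2,1}\hookrightarrow L^2$.

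For $p=2$ your target $r=\infty$ is unusable, since Riesz transforms are unbounded on $L^\infty$. No iteration is needed there: $B^{1}_{2,\infty}\hookrightarrow L^4$ already gives $F\in L^\infty(0,T;L^2)$.
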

Proposition~\ref{0407-1} follows by the same argument as above, based on estimate of the Duhamel term and an iteration argument. 
Accordingly, as in Proposition~\ref{0211-4}, we only state the lemma used in the proof. 

%%$\alpha = 1$の場合のfirst iteration%% 
The next lemma follows from the same argument as Lemma~\ref{0410-1} and the fact that $B_{p,\infty}^{\frac{2}{p}} \hookrightarrow L^{p}$. 
\begin{lem}
  Let $T>0$, $1\leq p < \infty$ and $s\in\mathbb{R}$ with $s < 2/p$
  Then there exists a constant $C_{T}>0$ such that for any $\theta\in L^{\infty}(0,T; B_{p,\infty}^{\frac{2}{p}})$, we have 
  \begin{equation*}
    {\left\|\int_{0}^{t}e^{-(t-s)\Lambda}\Big(\big((\nabla ^\perp \Lambda^{-1}\theta)\cdot\nabla\big)\theta\Big) ~{\rm d}s\right\|}_{L^{\infty}(0,T; B_{\frac{p}{2}, 1}^{s})} 
    \leq C_{T}{\|\theta\|}_{L^\infty(0,T; B_{p,\infty}^{\frac{2}{p}})}^2. 
  \end{equation*}
\end{lem}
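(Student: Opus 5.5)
The plan is to follow the proof of Lemma~\ref{0410-1} with $\alpha = 1$. The semigroup smoothing reduces the estimate to a Besov bound for the nonlinear term with regularity $s-1$ and integrability $p/2$. That bound then follows from the Bony decomposition together with Lemma~\ref{0205-5} and Lemma~\ref{0205-9}.

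First, since $B_{p/2,1}^{s_2}\hookrightarrow B_{p/2,1}^{s_1}$ for $s_1<s_2$ in non-homogeneous spaces, we may assume $s$ lies in a window just below $2/p$, say $\frac{4}{p}-1 < s < \frac{2}{p}$ (when $p\ge 2$ we have $\frac{4}{p}-1\le \frac{2}{p}$). The low-frequency block $\psi*\cdot$ is handled by the $L^{p/2}$-boundedness of $e^{-(t-s)\Lambda}$, as in the earlier lemmas. For $j\in\mathbb{N}$, Lemma~\ref{0206-2} gives
\begin{equation*}
  2^{sj}\|\phi_j*e^{-(t-\tau)\Lambda}F\|_{L^{p/2}}
  \le C\,2^{(s-\sigma)j}e^{-c(t-\tau)2^{j}}\,2^{\sigma j}\|\phi_j*F\|_{L^{p/2}},
\end{equation*}
where $F = ((\nabla^\perp\Lambda^{-1}\theta)\cdot\nabla)\theta$ and $\sigma$ is the regularity we can prove for $F$. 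Summing over $j$ bounds the $\ell^1$ sum by $C(t-\tau)^{-(s-\sigma)}\|F\|_{B^{\sigma}_{p/2,\infty}}$, provided $s-\sigma>0$. The time integral is then finite exactly when $s-\sigma<1$. This gives the $L^\infty(0,T;B^{s}_{p/2,1})$ bound by $C_T\|F\|_{L^\infty(0,T;B^{\sigma}_{p/2,\infty})}$.

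The nonlinear estimate should hold with $\sigma = \frac{4}{p}-1$. The natural count is two factors of regularity $2/p$ at integrability $p$, plus one derivative, giving $\frac{4}{p}-1$; the constraint $s-\sigma<1$ then becomes $s<\frac{2}{p}\cdot 2 - \frac{2}{p} = \frac{2}{p}$, which is exactly the hypothesis. I split $F$ by Bony's decomposition as in Lemma~\ref{0211-2}.

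For the paraproduct where $\theta$ is at low frequency and $\nabla\theta$ at high frequency, I use the hint $B^{2/p}_{p,\infty}\hookrightarrow L^p$. Riesz boundedness on $L^p$ ($1<p<\infty$) gives $\nabla^\perp\Lambda^{-1}\theta\in L^p$, and Lemma~\ref{0205-5}~\eqref{0212-6} with $\nabla\theta\in B^{2/p-1}_{p,\infty}$ places this term in $B^{2/p-1}_{p/2,\infty}\hookrightarrow B^{4/p-1}_{p/2,\infty}$.

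For the other paraproduct, with $\nabla\theta$ at low frequency, the derivative falls on low frequencies, which costs $2^{k}$. I bound it by the low-frequency regularity of $\theta$ and sum geometric series. Because $2/p - 1 \le 0$, the dyadic sum $\sum_{k\le l-2}2^{k}\|\phi_k*\theta\|_{L^p}\lesssim 2^{l(1-2/p)}\|\theta\|$ when $2/p<1$, with a $\log$ loss at $p=2$ that is absorbed by the strict inequality on $s$. This yields regularity $\frac{2}{p}+\frac{2}{p}-1$.

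The resonant part is rewritten symmetrically and estimated by Lemma~\ref{0205-9} with $s'=2/p$ and $s+1-s'=2/p$, which gives $\sigma=\frac{4}{p}-1>-2$. The low-frequency pieces involving $\psi$ and $\phi_1$ are trivial.

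The main obstacle is the endpoint bookkeeping. At $p=2$ the embedding into $L^p$ and the $\log$ loss in the paraproduct need care. One also must check that the exponent $s-\sigma\in(0,1)$ condition holds uniformly for all $s<2/p$ after the reduction. The case $p=1$, included in the statement, does not fit the Riesz bound; I would treat it through the Besov paraproduct bound \eqref{0212-9} with a slightly negative index instead of the $L^p$ one. If that fails, the reduction to the window near $2/p$ can instead be made at a smaller $s$, using $s<2/p$ only to gain slack.
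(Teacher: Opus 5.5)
\textbf{There is a genuine gap: the nonlinear bound with $\sigma=\frac4p-1$ is false.} Your skeleton is the paper's: reduce to a window below the critical index, use Lemma~\ref{0206-2} to trade $s-\sigma$ derivatives for the factor $(t-\tau)^{-(s-\sigma)}$, and bound $F=((\nabla^\perp\Lambda^{-1}\theta)\cdot\nabla)\theta$ in $B^{\sigma}_{p/2,\infty}$ by Bony's decomposition. The failure is in the last step.

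For the low--high paraproduct, where $u=\nabla^\perp\Lambda^{-1}\theta$ is at low frequency, your computation via \eqref{0212-6} lands in $B^{2/p-1}_{p/2,\infty}$. You then invoke $B^{2/p-1}_{p/2,\infty}\hookrightarrow B^{4/p-1}_{p/2,\infty}$, which goes the wrong way: in the nonhomogeneous scale one can only lower the regularity index. The loss is real, not an artifact. A paraproduct gains regularity from its low-frequency factor only when that factor has negative regularity. For example, take $\theta=a+b$ with $a$ a fixed smooth low-frequency function and $b=\sum_j 2^{-2j/p}\chi(x)\cos(2^jx_1)$, where $\chi$ is Schwartz with small Fourier support. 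Then the piece $u_a\cdot\nabla b$, with $u_a=\nabla^\perp\Lambda^{-1}a$, has $j$-th dyadic block of size $\sim 2^{j(1-2/p)}$ in $L^{p/2}$. So it lies in $B^{\sigma}_{p/2,\infty}$ only for $\sigma\le\frac2p-1$.

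Your count $\frac4p-1$ is what happens in Lemma~\ref{0410-1}. There $\theta$ has regularity $1-\alpha+\frac2p<0$, and \eqref{0212-9} gains from it. At $\alpha=1$ the regularity $\frac2p$ is positive, and that gain is lost.

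You also have an arithmetic slip. With $\sigma=\frac4p-1$, the condition $s-\sigma<1$ reads $s<\frac4p$, not $s<\frac2p$. The two errors cancel and reproduce the hypothesis, but the argument as written does not prove the lemma. Note also that your window $(\frac4p-1,\frac2p)$ is empty at $p=2$.

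\textbf{The repair is to take $\sigma=\frac2p-1$,} which is what the paper's hint $B^{2/p}_{p,\infty}\hookrightarrow L^p$ points to.
\begin{enumerate}
\item Bound the low--high paraproduct by \eqref{0212-6} with $p_1=p_2=p$, using $\|u\|_{L^p}\le C\|\theta\|_{L^p}\le C\|\theta\|_{B^{2/p}_{p,\infty}}$.
\item Your estimates for the other paraproduct and the resonant term, via \eqref{0212-9} and Lemma~\ref{0205-9}, are correct and give $B^{4/p-1}_{p/2,\infty}$. This embeds correctly into $B^{2/p-1}_{p/2,\infty}$.
\item Reduce to $\frac2p-1<s<\frac2p$. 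Then $0<s-\sigma<1$, and $\int_0^t(t-\tau)^{-(s-\sigma)}\,d\tau\le C_T$.
\end{enumerate}
This is exactly why the lemma states $s<\frac2p$ rather than $s<\frac4p$, the formal $\alpha=1$ analogue of Lemma~\ref{0410-1}.

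Finally, for $1\le p<2$ the space $L^{p/2}$ is only quasi-normed. Young's inequality and Lemma~\ref{0206-2} as stated do not apply there, and your suggestion to handle $p=1$ through \eqref{0212-9} does not address this. The paper only uses $2\le p<\infty$ (Proposition~\ref{0407-1}), and there the repaired argument works.
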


\section{Proof of theorems}\label{0327-2} 
\subsection{Proof of Theorem~\ref{0204-1}}
By the (global) existence result (\cite{Ca_Fe_2008}), 
there exists $\bar{\theta} \in C([0,T] ; L^{\frac{2}{\alpha-1}})$ a solution of \eqref{SQG} with $\bar{\theta}(0) = \theta_0$ and satisfies \eqref{0203-1}. 
We prove that 
\begin{equation*}
  \theta^{(1)}(t) = \bar{\theta}(t) \text{ and }
  \theta^{(2)}(t) = \bar{\theta}(t) \text{ in }L^{\frac{2}{\alpha-1}} 
  \text{ for a.e }t\in (0, T). 
\end{equation*}
This implies Theorem~\ref{0204-1}. 
It is sufficient to show that $\theta^{(1)}(t) = \bar{\theta}(t)$. 
Let 
\begin{equation*}
  w := \theta^{(1)} - \bar{\theta}. 
\end{equation*}
Since $\theta^{(1)}, \bar{\theta}$ fulfill \eqref{0213-1} (see also Proposition~\ref{0204-4}), $w$ satisfies 
\begin{equation*}
  \begin{cases}
    \partial_t w + \Lambda^{\alpha} w + ((\nabla^{\perp}\Lambda^{-1}\theta^{(1)})\cdot\nabla)\theta^{(1)} - ((\nabla^{\perp}\Lambda^{-1}\bar{\theta})\cdot\nabla)\bar{\theta} = 0,\\ 
    w(0,x) = 0, 
  \end{cases}
\end{equation*}
in the sense of distributions, and we can write 
\begin{equation*}
  ((\nabla^{\perp}\Lambda^{-1}\theta^{(1)})\cdot\nabla)\theta^{(1)} - ((\nabla^{\perp}\Lambda^{-1}\bar{\theta})\cdot\nabla)\bar{\theta}
  = ((\nabla^{\perp}\Lambda^{-1}w)\cdot\nabla)\theta^{(1)} + ((\nabla^{\perp}\Lambda^{-1}\bar{\theta})\cdot\nabla)w
\end{equation*}
Moreover, using Proposition~\ref{0204-3}, $w \in L^{\infty}(0,T ; \dot{H}^{-\frac{1}{2}})\cap L^2(0,T ; \dot{H}^{\frac{\alpha}{2}-\frac{1}{2}})$. 
Thus, we can justify the energy inequality~\eqref{0215-6} i.e., $w$ satisfies 
\begin{align}\label{0212-4}
  &\frac{1}{2}\frac{{\rm d}}{{\rm d}t}{\|\Lambda^{-\frac{1}{2}}w(t)\|}_{L^2}^2 
  + {\|\Lambda^{\frac{\alpha}{2}-\frac{1}{2}}w(t)\|}_{L^2}^2\notag\\ 
  \leq& -\int_{\mathbb{R}^2}\Big(\big((\nabla^{\perp}\Lambda^{-1}w(t))\cdot \nabla\big)\theta^{(1)}(t) + \big((\nabla^{\perp}\Lambda^{-1}\bar{\theta}(t))\cdot \nabla\big) w(t)\Big)(\Lambda^{-1}w(t))~{\rm d}x. 
\end{align}
Note that thanks to $\nabla^{\perp} \cdot \nabla f =0$, we obtain 
\begin{align*}
  &\int_{\mathbb{R}^2}\big((\nabla^{\perp}\Lambda^{-1}w(t))\cdot \nabla\big)\theta^{(1)}(t)(\Lambda^{-1}w(t))~{\rm d}x\\ 
  =& - \int_{\mathbb{R}^2}\Big(\big((\nabla^{\perp}\Lambda^{-1}w(t))\cdot \nabla\big)\theta^{(1)}(t)\Big)\Lambda^{-1}w(t)~{\rm d}x, 
\end{align*}
and we have 
\begin{equation*}
  \int_{\mathbb{R}^2}\big((\nabla^{\perp}\Lambda^{-1}w(t))\cdot \nabla\big)\theta^{(1)}(t)(\Lambda^{-1}w(t))~{\rm d}x 
  = 0. 
\end{equation*}
We can also write 
\begin{align}\label{0204-5}
  &- \int_{\mathbb{R}^2}\big((\nabla^{\perp}\Lambda^{-1}\bar{\theta}(t))\cdot \nabla\big)w(t)(\Lambda^{-1}w(t))~{\rm d}x\notag\\ 
  =& \int_{\mathbb{R}^2}(\Lambda^{-1}\bar{\theta}(t))\Big(\big((\nabla^{\perp}\Lambda^{-1}w(t))\cdot \nabla\big)w(t)\Big)~{\rm d}x. 
\end{align}
We now estimate the right-hand side of \eqref{0204-5}. 
Let $N\in\mathbb{N}$ be fixed later. 
We divide $\bar{\theta}$ as follows, 
\begin{equation*}
  \bar{\theta} = \sum_{|j|>N} \phi_j*\bar{\theta} + \sum_{|j|\leq N} \phi_j*\bar{\theta} 
  =: {\bar{\theta}}_{>N} + {\bar{\theta}}_{\leq N}. 
\end{equation*}
By Lemma~\ref{0205-1}, Lemma~\ref{0205-2} and continuity of $\bar{\theta}$ with respect to time, 
for any $\varepsilon>0$, there exists $N\in \mathbb{N}$ such that 
\begin{equation*}
  \sup_{t\in (0, T)}{\|{\bar{\theta}}_{>N}(t)\|}_{L^{\frac{2}{\alpha-1}}}\leq \varepsilon, 
\end{equation*}
and there exists constant $C>0$ such that 
\begin{equation*}
  \sup_{t\in (0, T)}{\|{\bar{\theta}}_{\leq N}(t)\|}_{W^{\alpha - \frac{1}{2}, \frac{2}{\alpha-1}}}\leq C. 
\end{equation*}
By H\"older's inequality, we have 
\begin{align*}
  &\int_{\mathbb{R}^2}(\Lambda^{-1}\bar{\theta}(t))\Big(\big((\nabla^{\perp}\Lambda^{-1}w(t))\cdot \nabla\big)w(t)\Big)~{\rm d}x\\ 
  \leq& \sup_{t\in (0, T)}{\|{\bar{\theta}}_{>N}(t)\|}_{L^{\frac{2}{\alpha-1}}}{\|\big((\nabla^{\perp}\Lambda^{-1}w(t))\cdot \nabla\big)w(t)\|}_{\dot{W}^{-1, \frac{2}{3-\alpha}}}\\
  &+\sup_{t\in (0, T)}{\|{\bar{\theta}}_{\leq N}(t)\|}_{W^{\alpha - \frac{1}{2}, \frac{2}{\alpha-1}}}{\|\big((\nabla^{\perp}\Lambda^{-1}w(t))\cdot \nabla\big)w(t)\|}_{W^{-\alpha - \frac{1}{2}, \frac{2}{3-\alpha}}}\\ 
  \leq& \varepsilon {\|\big((\nabla^{\perp}\Lambda^{-1}w(t))\cdot \nabla\big)w(t)\|}_{\dot{W}^{-1, \frac{2}{3-\alpha}}} 
   + C{\|\big((\nabla^{\perp}\Lambda^{-1}w(t))\cdot \nabla\big)w(t)\|}_{W^{-\alpha - \frac{1}{2}, \frac{2}{3-\alpha}}}. 
\end{align*}
Using $\nabla\cdot \nabla^{\perp}=0$ and Sobolev embedding $\dot{W}^{\frac{\alpha}{2}-\frac{1}{2}, p}(\mathbb{R}^2)\hookrightarrow L^{\frac{2}{3-\alpha}}(\mathbb{R}^2)$, where 
\begin{equation*}
  \frac{1}{p} = \frac{3-\alpha}{2} + \frac{1}{2}\left(\frac{\alpha}{2} - \frac{1}{2}\right) = \frac{5-\alpha}{4}, 
\end{equation*} 
we get 
\begin{equation*}
  {\|\big((\nabla^{\perp}\Lambda^{-1}w)\cdot \nabla\big)w\|}_{\dot{W}^{-1, \frac{2}{3-\alpha}}} 
  \leq C{\|(\nabla^{\perp}\Lambda^{-1}w)w\|}_{\dot{W}^{\frac{\alpha}{2}-\frac{1}{2}, \frac{4}{5-\alpha}}}. 
\end{equation*}
Now $0<\alpha/2 - 1/2$ and $1<4/(5-\alpha)$. 
Thus, by Lemma~\ref{0205-3} and boundedness of Riesz transform in $L^p$ ($1<p<\infty$), we obtain 
\begin{align*}
  {\|(\nabla^{\perp}\Lambda^{-1}w)w\|}_{\dot{W}^{\frac{\alpha}{2}-\frac{1}{2}, \frac{4}{5-\alpha}}} 
  &\leq C({\|\nabla^{\perp}\Lambda^{-1}w\|}_{L^{\frac{4}{3-\alpha}}}{\|w\|}_{\dot{H}^{\frac{\alpha}{2}-\frac{1}{2}}}
   + {\|\nabla^{\perp}\Lambda^{-1}w\|}_{\dot{H}^{\frac{\alpha}{2}-\frac{1}{2}}}{\|w\|}_{L^{\frac{4}{3-\alpha}}})\\ 
  &\leq C {\|w\|}_{L^{\frac{4}{3-\alpha}}}{\|w\|}_{\dot{H}^{\frac{\alpha}{2}-\frac{1}{2}}}. 
\end{align*}
Since $2<4/(3-\alpha)$, we have 
\begin{equation*}
  \dot{W}^{\frac{\alpha}{2} - \frac{1}{2}, p}(\mathbb{R}^2)\hookrightarrow L^{\frac{4}{3-\alpha}}(\mathbb{R}^2), 
  \text{ where } \frac{1}{p} = \frac{3-\alpha}{4} + \frac{1}{2}\left(\frac{\alpha}{2} - \frac{1}{2}\right) 
  = \frac{1}{2}. 
\end{equation*}
Hence we obtain 
\begin{equation*}
  {\|\big((\nabla^{\perp}\Lambda^{-1}w(t))\cdot \nabla\big)w(t)\|}_{\dot{W}^{-1, \frac{2}{3-\alpha}}} 
  \leq C {\|w(t)\|}_{\dot{H}^{\frac{\alpha}{2}-\frac{1}{2}}}^2. 
\end{equation*}
We now estimate 
\begin{equation*}
  {\|\big((\nabla^{\perp}\Lambda^{-1}w(t))\cdot \nabla\big)w(t)\|}_{W^{-\alpha - \frac{1}{2}, \frac{2}{3-\alpha}}}. 
\end{equation*}
Using Lemma~\ref{0205-4} \eqref{0206-8}, we have 
\begin{equation*}
  {\|\big((\nabla^{\perp}\Lambda^{-1}w)\cdot \nabla\big)w\|}_{W^{-\alpha - \frac{1}{2} , \frac{2}{3-\alpha}}} 
  \leq C {\|\big((\nabla^{\perp}\Lambda^{-1}w)\cdot \nabla\big)w\|}_{B_{\frac{2}{3-\alpha}, 1}^{-\alpha - \frac{1}{2}}}. 
\end{equation*}
By Bony' s decomposition, we get 
\begin{align*}
  &{\|\big((\nabla^{\perp}\Lambda^{-1}w)\cdot \nabla\big)w\|}_{B_{\frac{2}{3-\alpha}, 1}^{-\alpha - \frac{1}{2}}}\\ 
  \leq& {\left\|\sum_{l\geq 2}\big((\nabla^{\perp}\Lambda^{-1}(\psi*w))\cdot \nabla\big)(\phi_l*w) + \sum_{k\leq l - 2}\big((\nabla^{\perp}\Lambda^{-1}(\phi_k*w))\cdot \nabla\big)(\phi_l*w)\right\|}_{B_{\frac{2}{3-\alpha}, 1}^{-\alpha - \frac{1}{2}}} \\ 
    &+ {\left\|\sum_{k\geq 2}\big((\nabla^{\perp}\Lambda^{-1}(\phi_{k}*w))\cdot \nabla\big)(\psi*w) + \sum_{l\leq k - 2}\big((\nabla^{\perp}\Lambda^{-1}(\phi_k*w))\cdot \nabla\big)(\phi_l*w)\right\|}_{B_{\frac{2}{3-\alpha}, 1}^{-\alpha - \frac{1}{2}}}\\ 
    &+ {\left\|\big((\nabla^{\perp}\Lambda^{-1}(\psi*w))\cdot \nabla\big)(\psi*w)\right\|}_{B_{\frac{2}{3-\alpha}, 1}^{-\alpha - \frac{1}{2}}}\\ 
    &+ {\left\|\big((\nabla^{\perp}\Lambda^{-1}(\psi*w))\cdot \nabla\big)(\phi_{1}*w) + \big((\nabla^{\perp}\Lambda^{-1}(\phi_{1}*w))\cdot \nabla\big)(\psi_l*w)\right\|}_{B_{\frac{2}{3-\alpha}, 1}^{-\alpha - \frac{1}{2}}}\\ 
    &+ {\left\|\sum_{|k-l|\leq 1}\big((\nabla^{\perp}\Lambda^{-1}(\phi_k*w))\cdot \nabla\big)(\phi_l*w)\right\|}_{B_{\frac{2}{3-\alpha}, 1}^{-\alpha - \frac{1}{2}}}. 
\end{align*}
Using Lemma~\ref{0205-7}, product estimate (Lemma~\ref{0205-5} \eqref{0212-9}) and embedding (Lemma~\ref{0205-6}), we have 
\begin{align*}
  &{\left\|\sum_{l\geq 2}\big((\nabla^{\perp}\Lambda^{-1}(\psi*w))\cdot \nabla\big)(\phi_l*w) + \sum_{k\leq l - 2}\big((\nabla^{\perp}\Lambda^{-1}(\phi_k*w))\cdot \nabla\big)(\phi_l*w)\right\|}_{B_{\frac{2}{3-\alpha}, 1}^{-\alpha - \frac{1}{2}}}\\ 
  \leq& C {\left\|\sum_{l\geq 2}(\nabla^{\perp}\Lambda^{-1}(\psi*w))(\phi_l*w) + \sum_{k\leq l - 2}(\nabla^{\perp}\Lambda^{-1}(\phi_k*w))(\phi_l*w)\right\|}_{B_{\frac{2}{3-\alpha}, 1}^{-\alpha + \frac{1}{2}}}\\ 
  \leq& C {\|\nabla^{\perp}\Lambda^{-1}w\|}_{B_{\frac{2}{2-\alpha}, 2}^{-\alpha + \frac{1}{2}}}{\|w\|}_{B_{2,2}^{0}}\\ 
  \leq& C {\|w\|}_{B_{2, 2}^{-\frac{1}{2}}}{\|w\|}_{L^{2}}, 
\end{align*}
and 
\begin{align*}
  &{\left\|\sum_{k\geq 2}\big((\nabla^{\perp}\Lambda^{-1}(\phi_{k}*w))\cdot \nabla\big)(\psi*w) + \sum_{l\leq k - 2}\big((\nabla^{\perp}\Lambda^{-1}(\phi_k*w))\cdot \nabla\big)(\phi_l*w)\right\|}_{B_{\frac{2}{3-\alpha}, 1}^{-\alpha - \frac{1}{2}}}\\ 
  \leq& C {\|w\|}_{B_{2, 2}^{-\frac{1}{2}}}{\|w\|}_{L^{2}}. 
\end{align*}
Since it can be written as a finite sum, we obtain 
\begin{equation*}
  {\left\|\big((\nabla^{\perp}\Lambda^{-1}(\psi*w))\cdot \nabla\big)(\psi*w)\right\|}_{B_{\frac{2}{3-\alpha}, 1}^{-\alpha - \frac{1}{2}}} 
  \leq C {\|w\|}_{B_{2, 2}^{-\frac{1}{2}}}{\|w\|}_{L^{2}}, 
\end{equation*}
and 
\begin{align*}
  &{\left\|\big((\nabla^{\perp}\Lambda^{-1}(\psi*w))\cdot \nabla\big)(\phi_{1}*w) + \big((\nabla^{\perp}\Lambda^{-1}(\phi_{1}*w))\cdot \nabla\big)(\psi_l*w)\right\|}_{B_{\frac{2}{3-\alpha}, 1}^{-\alpha - \frac{1}{2}}}\\ 
  \leq& C {\|w\|}_{B_{2, 2}^{-\frac{1}{2}}}{\|w\|}_{L^{2}}. 
\end{align*}
Note that $B_{2,2}^{s} = H^s$ (Lemma~\ref{0205-8}). 
Using Lemma~\ref{0205-6}, we get 
\begin{align*}
  &{\left\|\sum_{|k-l|\leq 2}\big((\nabla^{\perp}\Lambda^{-1}(\phi_k*w))\cdot \nabla\big)(\phi_l*w)\right\|}_{B_{\frac{2}{3-\alpha}, 1}^{-\alpha - \frac{1}{2}}}\\ 
  \leq& C {\left\|\sum_{|k-l|\leq 2}\big((\nabla^{\perp}\Lambda^{-1}(\phi_k*w))\cdot \nabla\big)(\phi_l*w)\right\|}_{B_{1, 1}^{-\frac{3}{2}}}.  
\end{align*}
Since we can write 
\begin{align*}
  &\sum_{|k-l|\leq 1}\big((\nabla^{\perp}\Lambda^{-1}(\phi_k*w))\cdot \nabla\big)(\phi_l*w)\\ 
  =& \frac{1}{2}\sum_{|k-l|\leq 1}\Big(\big((\nabla^{\perp}\Lambda^{-1}(\phi_k*w))\cdot \nabla\big)(\phi_l*w) 
  + \big((\nabla^{\perp}\Lambda^{-1}(\phi_l*w))\cdot \nabla\big)(\phi_k*w)\Big), 
\end{align*}
by Lemma~\ref{0205-9}, we get 
\begin{equation*}
  {\left\|\sum_{|k-l|\leq 2}\big((\nabla^{\perp}\Lambda^{-1}(\phi_k*w))\cdot \nabla\big)(\phi_l*w)\right\|}_{B_{1, 1}^{-\frac{3}{2}}} 
  \leq C {\|w\|}_{L^{2}}{\|w\|}_{H^{- \frac{1}{2}}}. 
\end{equation*}
Hence, we have 
\begin{equation*}
  \frac{1}{2}\frac{{\rm d}}{{\rm d}t}{\|\Lambda^{-\frac{1}{2}}w(t)\|}_{L^2}^2 
  + {\|\Lambda^{\frac{\alpha}{2}-\frac{1}{2}}w(t)\|}_{L^2}^2 
  \leq C\varepsilon{\|w(t)\|}_{\dot{H}^{\frac{\alpha}{2}-\frac{1}{2}}}^2 + C {\|w(t)\|}_{L^{2}}{\|w(t)\|}_{H^{- \frac{1}{2}}}. 
\end{equation*}
Thanks to $\dot{H}^{-s} \hookrightarrow H^{-s}$ $(s>0)$ and interpolation, we obtain 
\begin{equation*}
  {\|w(t)\|}_{L^{2}}{\|w(t)\|}_{H^{- \frac{1}{2}}} 
  \leq C {\|w(t)\|}_{\dot{H}^{- \frac{1}{2}}}^{\theta}{\|w(t)\|}_{\dot{H}^{\frac{\alpha}{2} - \frac{1}{2}}}^{1 - \theta}{\|w(t)\|}_{\dot{H}^{- \frac{1}{2}}}, 
\end{equation*}
where $\theta\in \mathbb{R}$ satisfies 
\begin{equation*}
  -\frac{\theta}{2} + (1-\theta)\left(\frac{\alpha}{2} - \frac{1}{2}\right) = 0. 
\end{equation*}
Note that $0<\theta<1$. 
Moreover, we have 
\begin{equation*}
  {\|w(t)\|}_{\dot{H}^{- \frac{1}{2}}}^{\theta}{\|w(t)\|}_{\dot{H}^{\frac{\alpha}{2} - \frac{1}{2}}}^{1 - \theta}{\|w(t)\|}_{\dot{H}^{- \frac{1}{2}}} 
  \leq \frac{C}{\varepsilon}{\|w(t)\|}_{\dot{H}^{- \frac{1}{2}}}^{2} + \varepsilon{\|w(t)\|}_{\dot{H}^{\frac{\alpha}{2} - \frac{1}{2}}}^{2}. 
\end{equation*}
Thus, we obtain 
\begin{align*}
  \frac{1}{2}\frac{{\rm d}}{{\rm d}t}{\|\Lambda^{-\frac{1}{2}}w(t)\|}_{L^2}^2 
  + {\|\Lambda^{\frac{\alpha}{2}-\frac{1}{2}}w(t)\|}_{L^2}^2 
  \leq C \varepsilon {\|w(t)\|}_{\dot{H}^{\frac{\alpha}{2}-\frac{1}{2}}}^2 + \frac{C}{\varepsilon}{\|w(t)\|}_{\dot{H}^{- \frac{1}{2}}}^{2}. 
\end{align*}
We take $\varepsilon>0$ sufficiently small such that 
\begin{equation*}
  C\varepsilon  \leq \frac{1}{2}, 
\end{equation*}
then we obtain 
\begin{equation*}
  \frac{{\rm d}}{{\rm d}t}{\|\Lambda^{-\frac{1}{2}}w(t)\|}_{L^2}^2 
  \leq \frac{C}{\varepsilon}{\|\Lambda^{-\frac{1}{2}}w(t)\|}_{L^2}^2. 
\end{equation*}
Using Gronwall's inequality, we have 
\begin{equation*}
  w(t)=0 \text{ in } \dot{H}^{-\frac{1}{2}} \text{ for a.e. } t\in (0,T), 
\end{equation*}
also, 
\begin{equation*}
  w(t)=0 \text{ in } L^{\frac{2}{\alpha-1}} \text{ for a.e. } t\in (0,T). 
\end{equation*}

\subsection{Proof of Theorem~\ref{0212-1}} 
Similariy to proof of Theorem~\ref{0204-1}, 
let $\bar{\theta}\in C([0,T]; B_{\frac{4}{\alpha-1}, 2}^{-\frac{1}{2}(\alpha - 1)})$ be a solution of the integral equation of \eqref{SQG} in the sense of Definition~\ref{0215-1} with $\theta(0)=\theta_0$. 
Then by Proposition~\ref{0211-1}, $w = \theta^{(1)} - \bar{\theta}$ belongs to $L^{\infty}(0,T; \dot{H}^{-\frac{1}{2}})\cap L^{2}(0,T; \dot{H}^{\frac{\alpha}{2} - \frac{1}{2}})$ and satisfies \eqref{0212-4}. 
Moreover, by duality in Besov spaces (Lemma~\ref{0212-5}), Lemma~\ref{0212-2} and Lemma~\ref{0212-3}, we have 
\begin{align*}
  &\int_{\mathbb{R}^2}(\Lambda^{-1}\bar{\theta}(t))\Big(\big((\nabla^{\perp}\Lambda^{-1}w(t))\cdot \nabla\big)w(t)\Big)~{\rm d}x\\ 
  \leq& \sup_{t\in (0, T)}{\|{\bar{\theta}}_{>N}(t)\|}_{B_{\frac{4}{\alpha-1}, 2}^{-\frac{1}{2}(\alpha - 1)}}{\|\big((\nabla^{\perp}\Lambda^{-1}w(t))\cdot \nabla\big)w(t)\|}_{B_{\frac{4}{5-\alpha}, 2}^{\frac{1}{2}(\alpha - 1) - 1}}\\
  &+\sup_{t\in (0, T)}{\|{\bar{\theta}}_{\leq N}(t)\|}_{B_{\frac{4}{\alpha-1}, 2}^{\frac{\alpha}{2}}}{\|\big((\nabla^{\perp}\Lambda^{-1}w(t))\cdot \nabla\big)w(t)\|}_{B_{\frac{4}{5-\alpha}, 2}^{-\frac{\alpha}{2} - 1}}\\ 
  \leq& \varepsilon {\left\|\big((\nabla^{\perp}\Lambda^{-1}w(t))\cdot \nabla\big)w(t)\right\|}_{B_{\frac{4}{5-\alpha}, 2}^{\frac{1}{2}(\alpha - 1) - 1}} 
   + C{\left\|\big((\nabla^{\perp}\Lambda^{-1}w(t))\cdot \nabla\big)w(t)\right\|}_{B_{\frac{4}{5-\alpha}, 2}^{-\frac{\alpha}{2} - 1}}. 
\end{align*}
Since $\nabla\cdot\nabla^{\perp}f=0$ and $\alpha>1$, using Lemma~\ref{0205-5} \eqref{0212-6} and Lemma~\ref{0212-7}, we get 
\begin{equation*}
  {\left\|\big((\nabla^{\perp}\Lambda^{-1}w)\cdot \nabla\big)w\right\|}_{B_{\frac{4}{5-\alpha}, 2}^{\frac{1}{2}(\alpha - 1) - 1}} 
  \leq C{\|w\|}_{L^{\frac{4}{3-\alpha}}}{\|w\|}_{B_{2,2}^{\frac{\alpha}{2}-\frac{1}{2}}}. 
\end{equation*}
Thanks to $B_{2,2}^{s} = H^{s}$, $H^{s} = L^{2}\cap \dot{H}^{s}$ $(s>0)$ and interpolation, we obtain 
\begin{equation*}
  {\|w\|}_{B_{2,2}^{\frac{\alpha}{2}-\frac{1}{2}}} 
  \leq C\left({\|w\|}_{L^{2}} + {\|w\|}_{\dot{H}^{\frac{\alpha}{2}-\frac{1}{2}}}\right) 
  \leq C\left({\|w\|}_{\dot{H}^{-\frac{1}{2}}} + {\|w\|}_{\dot{H}^{\frac{\alpha}{2}-\frac{1}{2}}}\right). 
\end{equation*}
Note that $3\leq 4/(3-\alpha)$, by Sobolev embedding, we have 
\begin{equation*}
  {\|w\|}_{L^{\frac{4}{3-\alpha}}} 
  \leq C {\|w\|}_{\dot{H}^{\frac{\alpha}{2}- \frac{1}{2}}}. 
\end{equation*}
Thus, we get 
\begin{equation*}
  {\left\|\big((\nabla^{\perp}\Lambda^{-1}w)\cdot \nabla\big)w\right\|}_{B_{\frac{4}{5-\alpha}, 2}^{\frac{1}{2}(\alpha - 1) - 1}} 
  \leq C\left({\|w\|}_{\dot{H}^{-\frac{1}{2}}}^{2} + {\|w\|}_{\dot{H}^{\frac{\alpha}{2}-\frac{1}{2}}}^{2}\right). 
\end{equation*}
Also, using Lemma~\ref{0205-9} instead of Lemma~\ref{0212-7}, we obtain 
\begin{equation*}
  {\left\|\big((\nabla^{\perp}\Lambda^{-1}w(t))\cdot \nabla\big)w(t)\right\|}_{B_{\frac{4}{5-\alpha}, 2}^{-\frac{\alpha}{2} - 1}} 
  \leq C {\|w\|}_{L^{2}}{\|w\|}_{B_{2,2}^{-\frac{1}{2}}}. 
\end{equation*}
Therefore, by the same argument as in Theorem~\ref{0204-1}, we can prove that the uniqueness of the solution of \eqref{SQG}. 

\subsection{Proof of Theorem~\ref{0212-8}} 
In this case, we consider 
\begin{equation*}
  \theta_{0} \in B_{p,p}^{-\frac{2}{3} + \frac{2}{p}} \text{ if } \alpha = \frac{5}{3} \text{ and } 3\leq p < 6, 
\end{equation*}
or 
\begin{equation*}
  \theta_{0} \in B_{p,q}^{1 - \alpha + \frac{2}{p}} \text{ if } 1 < \alpha < \frac{5}{2}, \frac{2}{\alpha - 1} \leq p < \frac{4}{\alpha - 1} \text{ and } 1\leq q < \infty. 
\end{equation*}
The proof is carried out using the same method as before. 
The energy inequality is justified by Proposition~\ref{0211-4}. 
We need to estimate the following term 
\begin{equation}\label{0409-2}
  {\left\|\big((\nabla^{\perp}\Lambda^{-1}w)\cdot \nabla\big)w\right\|}_{B_{\frac{p}{p-1}, \frac{q}{q-1}}^{-2 + \alpha - \frac{2}{p}}} 
  \quad\text{ and }\quad 
  {\left\|\big((\nabla^{\perp}\Lambda^{-1}w)\cdot \nabla\big)w\right\|}_{B_{\frac{p}{p-1}, \frac{q}{q-1}}^{- \frac{2}{p} - \frac{3}{2}}}. 
\end{equation}
Thanks to $-1+\alpha-2/p<\alpha/2-1/2$, 
by product estimate (Lemma~\ref{0205-5} \eqref{0212-9} and Lemma~\ref{0212-7} or Lemma~\ref{0205-9}) and embedding (Lemma~\ref{0205-6}), we obtain 
\begin{equation*}
  {\left\|\big((\nabla^{\perp}\Lambda^{-1}w)\cdot \nabla\big)w\right\|}_{B_{\frac{p}{p-1}, \frac{q}{q-1}}^{-2 + \alpha - \frac{2}{p}}} 
  \leq C {\|w\|}_{B_{\frac{2p}{p-2}, \frac{2q}{q-2}}^{\frac{\alpha}{2} - \frac{1}{2} - \frac{2}{p}}}{\|w\|}_{B_{2, 2}^{\frac{\alpha}{2} - \frac{1}{2}}} 
  \leq C {\|w\|}_{H^{\frac{\alpha}{2} - \frac{1}{2}}}^2, 
\end{equation*}
and 
\begin{equation*}
  {\left\|\big((\nabla^{\perp}\Lambda^{-1}w)\cdot \nabla\big)w\right\|}_{B_{\frac{p}{p-1}, \frac{q}{q-1}}^{- \frac{2}{p} - \frac{3}{2}}} 
  \leq C {\|w\|}_{B_{\frac{2p}{p-2}, \frac{2q}{q-2}}^{- \frac{2}{p}}}{\|w\|}_{B_{2, 2}^{-\frac{1}{2}}} 
  \leq C {\|w\|}_{L^{2}}{\|w\|}_{H^{- \frac{1}{2}}}. 
\end{equation*}
Thus, we can show that $w(t) = 0 $ in $B_{p,q}^{1-\alpha + \frac{2}{p}}$ for a.e. $t\in (0,T)$. 

\subsection{Proof of Theorem~\ref{0409-1}} 
In Theorem~\ref{0409-1}, we consider the cases $\alpha = 1$ and $\theta_{0}\in B_{p,q}^{\frac{2}{p}}$ with $2\leq p < \infty$, $1\leq q < \infty$. 
In this case, local well-posedness is established by Chen, Miao and Zhang~\cite{Ch_Mi_Zh_2007}. 
Furthermore, by the argument in Wang and Zhang~\cite{Wa_Za_2011} using modulus of continuity, the solution exists globally in time (see also~\cite{Ki_Na_Vo_2007}). 
Hence, the same method as before can be applied. 
By Proposition~\ref{0407-1}, the difference between two solutions with same initial data $\theta_{0}\in B_{p,q}^{\frac{2}{p}}$ belongs to $L^{\infty}(0,T; \dot{H}^{-\frac{1}{2}})\cap L^{2}(0,T; L^{2})$. 
Using the energy method, the proof reduces to the estimate of the following term. 
\begin{equation*}
  {\left\|\big((\nabla^{\perp}\Lambda^{-1}w)\cdot \nabla\big)w\right\|}_{B_{\frac{p}{p-1}, \frac{q}{q-1}}^{-\frac{2}{p} - 1}} 
  \quad\text{ and }\quad 
  {\left\|\big((\nabla^{\perp}\Lambda^{-1}w)\cdot \nabla\big)w\right\|}_{B_{\frac{p}{p-1}, \frac{q}{q-1}}^{- \frac{2}{p} - \frac{3}{2}}}. 
\end{equation*}
These terms can be estimated by the same argument as that used for terms~\eqref{0409-2}. 
Thus, we get 
\begin{equation*}
  {\left\|\big((\nabla^{\perp}\Lambda^{-1}w)\cdot \nabla\big)w\right\|}_{B_{\frac{p}{p-1}, \frac{q}{q-1}}^{-\frac{2}{p} - 1}} 
  \leq C {\|w\|}_{L^{2}}^{2}, 
\end{equation*}
and 
\begin{equation*}
  {\left\|\big((\nabla^{\perp}\Lambda^{-1}w)\cdot \nabla\big)w\right\|}_{B_{\frac{p}{p-1}, \frac{q}{q-1}}^{- \frac{2}{p} - \frac{3}{2}}} 
  \leq C {\|w\|}_{L^{2}}{\|w\|}_{H^{- \frac{1}{2}}}. 
\end{equation*}

Therefore, the uniqueness of the solutions of the integral equation of \eqref{SQG} in the sense of Definition~\ref{0215-1} in $L^{\infty}(0,T; B_{p,q}^{\frac{2}{p}})$ holds.

\appendix 
\section{Proof of Proposition~\ref{0204-4}. }\label{0213-2}
In this appendix, we prove Proposition~\ref{0204-4} using Littlewood-Paley decomposition. 
The overall strategy of the argument follows the proof of the analogous result for Navier-Stokes equations due to~\cite{Fa_Jo_Ri_1972}. 

\begin{proof}
  First, we assume that $\theta \in L^\infty(0,T; L^{\frac{2}{\alpha-1}})$ is a solution of \eqref{SQG} in the sense of Definition~\ref{0215-1}. 
  Fix $t\in (0,T)$ and $x\in\mathbb{R}^2$. 
  For any $j\in\mathbb{Z}$, by Definition~\ref{0215-1} \eqref{0203-1}, for $\phi_j(x - \cdot) \in \mathcal{S}(\mathbb{R}^2)$, 
  we obtain 
  \begin{equation*}
    \phi_j*\theta(t) = \phi_j*e^{-t\Lambda^{\alpha}}\theta_0 - \nabla\phi_j*\left(\int_{0}^{t}e^{-(t-s)\Lambda^{\alpha}}({\bf u}\theta)~{\rm d}s\right). 
  \end{equation*}
  Let $N\in \mathbb{N}$. 
  We denote 
  \begin{equation*}
    f_{N} := \sum_{|j|\leq N}\phi_j* f. 
  \end{equation*}
  Then $\theta_N$ satisfies 
  \begin{equation*}
    \theta_N(t) = e^{-t\Lambda^{\alpha}}\theta_{0,N} - \sum_{|j|\leq N}\nabla\phi_j*\left(\int_{0}^{t}e^{-(t-s)\Lambda^{\alpha}}({\bf u}\theta)~{\rm d}s\right). 
  \end{equation*}
  Note that for any $1\leq p <\infty$, we have 
  \begin{equation*}
    \lim_{N \to \infty}\theta_N = \theta \text{ in }L^p(0,T;L^{\frac{2}{\alpha-1}}) 
    \text{ and }
    \lim_{N \to \infty}({\bf u}\theta)_N = {\bf u}\theta \text{ in }L^{\frac{p}{2}}(0,T;L^{\frac{1}{\alpha-1}}). 
  \end{equation*}
  Since $\alpha>1$ and $\theta\in L^{\frac{2}{\alpha-1}}((0,T)\times \mathbb{R}^2)$, 
  there exists some sequence $\{\theta^{(\varepsilon)}\}_{\varepsilon>0}\in C_{c}^{\infty}((0,T)\times \mathbb{R}^2)$ such that 
  \begin{equation}\label{0417-1}
    \lim_{\varepsilon\to 0}\theta^{(\varepsilon)} = \theta \text{ in }L^{\frac{2}{\alpha-1}}((0,T) \times \mathbb{R}^2). 
  \end{equation}
  We consider $\theta_{N}^{(\varepsilon)}$ defined by 
  \begin{equation*}
    \theta_{N}^{(\varepsilon)} (t) 
    := e^{-t\Lambda^{\alpha}}\theta_{0,N} 
    - \sum_{|j|\leq N}\nabla\phi_j*\left(\int_{0}^{t}e^{-(t-s)\Lambda^{\alpha}}({\bf u}^{(\varepsilon)}\theta^{(\varepsilon)})~{\rm d}s\right). 
  \end{equation*}
  Thanks to ${\bf u}^{(\varepsilon)}$ is also smooth, we can write 
  \begin{equation*}
    \sum_{|j|\leq N}\nabla\phi_j*\left(\int_{0}^{t}e^{-(t-s)\Lambda^{\alpha}}({\bf u}^{(\varepsilon)}\theta^{(\varepsilon)})~{\rm d}s\right) 
    = \int_{0}^{t}e^{-(t-s)\Lambda^{\alpha}}\big(\nabla\cdot{({\bf u}^{(\varepsilon)}\theta^{(\varepsilon)})}_N\big)~{\rm d}s. 
  \end{equation*}
  Then $\theta_{N}^{(\varepsilon)}$ is differentiable with respect to time in the strong sense, 
  and $\partial_t \theta_{N}^{(\varepsilon)}$ satisfies 
  \begin{equation*}
    \begin{cases}
      \partial_t \theta_{N}^{(\varepsilon)} 
      + \Lambda^{\alpha}\theta_{N}^{(\varepsilon)} + \nabla\cdot{({\bf u}^{(\varepsilon)}\theta^{(\varepsilon)})}_N =0 ,\\ 
      \theta_{N}^{(\varepsilon)}(0, x) = \theta_{0,N}(x). 
    \end{cases}
  \end{equation*}
  Thus, for any $\varphi \in C^\infty([0,T]\times \mathbb{R}^2)$ such that $\varphi(T)=0$, 
  and for any $t\in [0,T]$, $\varphi(t, \cdot) \in \mathcal{S}(\mathbb{R}^2)$, we obtain 
  \begin{equation*}
    \displaystyle\int_{0}^{T}\hspace{-6pt}\Big(\hspace{-2pt}\subscripts{\mathcal{S}'}{\displaystyle\big\langle\theta_N^{(\varepsilon)}(t), \partial_t \varphi(t) - \Lambda^{\alpha}\varphi(t) \big\rangle}{\mathcal{S}} 
      + \subscripts{\mathcal{S}'}{\displaystyle\big\langle {({\bf u}^{(\varepsilon)}(t)\theta^{(\varepsilon)}(t))}_N, \nabla\varphi(t) \big\rangle}{\mathcal{S}}\hspace{-2pt}\Big){\rm d}t
      + \hspace{-2pt} \subscripts{\mathcal{S}'}{\displaystyle\big\langle\theta_{0,N}, \varphi(0) \big\rangle}{\mathcal{S}} \hspace{-3pt} =  0. 
  \end{equation*}
  If necessary, taking subsequences, and passing to the limit with respect to $\varepsilon\to 0$ and $N\to \infty$, 
  we obtain 
  \begin{equation*}
    \displaystyle\int_{0}^{T}\hspace{-6pt}\Big(\hspace{-2pt}\subscripts{\mathcal{S}'}{\displaystyle\big\langle\theta(t), \partial_t \varphi(t) - \Lambda^{\alpha}\varphi(t) \big\rangle}{\mathcal{S}} 
      + \subscripts{\mathcal{S}'}{\displaystyle\big\langle {\bf u}(t)\theta(t), \nabla\varphi(t) \big\rangle}{\mathcal{S}}\hspace{-2pt}\Big){\rm d}t
      + \hspace{-2pt} \subscripts{\mathcal{S}'}{\displaystyle\big\langle\theta_{0}, \varphi(0) \big\rangle}{\mathcal{S}} \hspace{-3pt} =  0, 
  \end{equation*}
  which implies $\theta$ is a weak solution of \eqref{SQG}. 

  Conversely, we assume that $\theta \in L^\infty(0,T; L^{\frac{2}{\alpha-1}})$ is a weak solution of \eqref{SQG}. 
  Similarly, for fixed $t\in (0,T)$ and $x\in\mathbb{R}^2$, we choose $\varphi(t, y) = \psi(t)\phi_j(x-y)$, where $\psi\in C^{\infty}([0,T])$ with $\psi(T)=0$, 
  and approximate $\theta$ by smooth function $\theta^{(\varepsilon)}$ as in \eqref{0417-1}. 
  Then 
  \begin{equation*}
    \left(\theta^{(\varepsilon)}\right)_{\leq N} 
    := \sum_{|j| \leq N}\phi_{j}*\theta^{(\varepsilon)}
  \end{equation*}
  satisfies 
  \begin{equation*}
    \int_{0}^{T}\left(\theta^{(\varepsilon)}\right)_{\leq N}\partial_t\psi(t) 
    - \left(\Lambda^{\alpha}\left(\theta^{(\varepsilon)}\right)_{\leq N} + \nabla\cdot\left({\bf u}^{(\varepsilon)}(t)\theta^{(\varepsilon)}(t)\right)_{N}\right)\psi(t) ~{\rm d}t 
    + \theta_{0, N}=0. 
  \end{equation*} 
  Now, we can define $(\theta^{(\varepsilon)})_{\leq N}(0)$ such that $(\theta^{(\varepsilon)})_{\leq N}(0) \to \theta_{0, N}$, as $\varepsilon \to 0$. 
  Using integration by part, we obtain 
  \begin{equation}\label{0203-2}
    \int_{0}^{T}\left(\partial_t \left(\theta^{(\varepsilon)}\right)_{\leq N} 
    + \Lambda^{\alpha}\left(\theta^{(\varepsilon)}\right)_{\leq N} 
    + \nabla\cdot\left({\bf u}^{(\varepsilon)}(t)\theta^{(\varepsilon)}(t)\right)_{N}\right)\psi(t) ~{\rm d}t 
    = \theta_{0, N} - \left(\theta^{(\varepsilon)}\right)_{\leq N}(0). 
  \end{equation} 
  Moreover, take the derivative of \eqref{0203-2} with respect to time, for any $t\in (0,T)$, we have 
  \begin{equation*}
    \partial_t \left(\theta^{(\varepsilon)}\right)_{\leq N} 
      + \Lambda^{\alpha}\left(\theta^{(\varepsilon)}\right)_{\leq N} + \nabla\cdot\left({\bf u}^{(\varepsilon)}\theta^{(\varepsilon)}\right)_{N} =0. 
  \end{equation*}
  Since $\theta_{N}^{(\varepsilon)}$ is smooth, we have 
  \begin{equation*}
    \left(\theta^{(\varepsilon)}\right)_{\leq N}(t) = e^{-t\Lambda^{\alpha}}\left(\theta^{(\varepsilon)}\right)_{\leq N}(0)
                                   - \int_{0}^{T}e^{-(t-s)\Lambda^{\alpha}}\left(\nabla\cdot\left({\bf u}^{(\varepsilon)}\theta^{(\varepsilon)}\right)_{N}\right)~{\rm d}s, 
  \end{equation*}
  and for any $\phi\in \mathcal{S}(\mathbb{R}^2)$, 
  \begin{align*}
    &\subscripts{\mathcal{S}'}{\displaystyle\left\langle\left(\theta^{(\varepsilon)}\right)_{\leq N}(t), \phi \right\rangle}{\mathcal{S}}\\ 
      &=\subscripts{\mathcal{S}'}{\left\langle e^{-t\Lambda^\alpha}\left(\theta^{(\varepsilon)}\right)_{\leq N}(0),\phi\right\rangle}{\mathcal{S}}
      +\subscripts{\mathcal{S}'}{\left\langle \int_{0}^{t}e^{-(t-s)\Lambda^\alpha}\left({\bf u}^{(\varepsilon)}\theta^{(\varepsilon)}\right)_{N} ~{\rm d}s,\nabla\phi \right\rangle}{\mathcal{S}}. 
  \end{align*}
  Therefore, passing to the limit, $\theta$ is a solution of \eqref{SQG} in the sense of Definition~\ref{0215-1}. 
\end{proof}

\vskip 7mm 

\noindent
{\bf Acknowledgments.} The author would like to thank Professor Tsukasa Iwabuchi for valuable discussions and continuous encouragement. 
The author was supported by the Grant-in-Aid for JSPS Fellows, Grant Number 26KJ0527.\\ 
{\bf Data availability statement.} This manuscript has no associated data.\\
{\bf Conflict of Interest.} The author declares that he has no conflict of interest.

\end{document}